\newtheorem{theorem}{Theorem}[section]
\newtheorem*{theorem*}{Theorem}
\newtheorem{lemma}{Lemma}[section]
\newtheorem{corollary}{Corollary}[section]
\newtheorem*{corollary*}{Corollary}
\newtheorem{proposition}{Proposition}[section]
\newtheorem*{proposition*}{Proposition}
\newtheorem{claim}{Claim}[section]
\theoremstyle{definition}
\newtheorem*{definition*}{Definition}
\newtheorem{definition}{Definition}[section]
\theoremstyle{remark}
\newtheorem*{notation}{Notation}
\theoremstyle{plain}
\newcommand{\Z}{{\mathbb Z}}
\newcommand{\C}{{\mathbb C}}
\newcommand{\N}{{\mathbb N}}
\newcommand{\CA}{{\mathcal A}}
\newcommand{\CC}{{\mathcal C}}
\newcommand{\CD}{{\mathcal D}}
\newcommand{\CF}{{\mathcal F}}
\newcommand{\CG}{{\mathcal G}}
\newcommand{\CB}{{\mathcal B}}
\newcommand{\ba}{\mathbf{a}}
\newcommand{\bb}{\mathbf{b}}
\newcommand{\bn}{\mathbf{n}}
\newcommand{\bx}{\mathbf{x}}
\newcommand{\by}{\mathbf{y}}
\newcommand{\bz}{\mathbf{z}}
\newcommand{\bu}{\mathbf{u}}
\newcommand{\bv}{\mathbf{v}}
\newcommand{\bw}{\mathbf{w}}
\newcommand{\QQ}{{\bf Q}}
\newcommand{\K}{{\bf K}}
\newcommand{\CQ}{{\bf Q}}
\newcommand{\inv}{^{-1}}
\newcommand{\id}{\bf Id}
\newcommand{\type}[1]{^{[#1]}}
\newcommand{\RP}{{\bf RP}}
\newcommand\Cubed{\{0,1\}^d}
\newcommand\one{{\bf 1}}
\newcommand{\norm}[1]{\Vert #1\Vert}
\newcommand{\nnorm}[1]{|\!|\!|#1|\!|\!|}
\newcommand{\prop}{{\mathcal P}(d)}
\begin{document}

\title
{Nilsequences and a structure theorem for topological dynamical systems}

\author{Bernard Host}
\address{Laboratoire d'analyse et de math\'ematiques appliqu\'{e}es, 
Universit\'e de Paris-Est, Marne la Vall\'ee \& CNRS UMR 8050\\
5 Bd. Descartes, Champs sur Marne\\
77454 Marne la Vall\'ee Cedex 2, France}
\email{bernard.host@univ-mlv.fr}

\author{Bryna Kra}
\address{ Department of Mathematics, Northwestern University \\ 2033 Sheridan Road Evanston \\ IL 60208-2730, USA} 
\email{kra@math.northwestern.edu}

\author{Alejandro Maass}
\address{Departamento de Ingenier\'{\i}a
Matem\'atica, Universidad de Chile
\& Centro de Modelamiento Ma\-te\-m\'a\-ti\-co 
UMI 2071 UCHILE-CNRS \\ Casilla 170/3 correo 3 \\
Santiago, Chili.}
\email{amaass@dim.uchile.cl} 

%subjclass{37B99}

\keywords{Nilsystems, distal systems, nilsequences, regionally proximal relation}

\thanks{The first author was partially supported by the Institut
Universitaire de France, the second author by NSF grant 
$0555250$, and 
the third author by the Millennium Nucleus Information and 
Randomness P04-069F, CMM-Fondap-Basal fund.  This work was begun 
during the visit of the authors to MSRI and we thank the institute 
for its hospitality.}

\begin{abstract}We characterize inverse limits of nilsystems 
in topological dynamics, via a structure theorem for topological 
dynamical systems that is an analog of the structure theorem 
for measure preserving systems. 
We provide two applications of the structure. 
The first is to nilsequences, which have played an important 
role in recent developments in ergodic theory and additive 
combinatorics; we give a characterization that detects if a given 
sequence is a nilsequence by only testing properties locally, meaning 
on finite intervals.
The second application is the construction of the maximal nilfactor 
of any order in a distal minimal topological dynamical system.  We show 
that this factor can be defined via a certain generalization of
the regionally proximal relation that is used  to produce the maximal equicontinuous factor and corresponds to the case of order $1$.  
\end{abstract}

\maketitle

\date{May 13, 2009}

 \markboth{Bryna Kra, Bernard Host, Alejandro
Maass}{Nilsequences and a structure theorem}

\section{Introduction}

\subsection{Nilsequences}
The connection between ergodic theory and additive combinatorics 
started in the 1970's, with Furstenberg's beautiful proof of Szemer\'edi's 
Theorem via ergodic theory.  Furstenberg's proof paved the way for 
new combinatorial results via ergodic methods, as well as leading to 
numerous developments within ergodic theory.  
More recently, the interaction between the 
fields has taken a new dimension, with ergodic objects being imported 
into the finite combinatorial setting.  Some objects at the center of this interchange are
nilsequences and the nilsystems on which they are defined.
They enter, for example, in ergodic theory into convergence of multiple ergodic averages~\cite{HK1} and 
into  the theory of multicorrelations~\cite{BHK}.  In number theory, they arise in finding
patterns in the primes (see ~\cite{GT} and the companion 
articles~\cite{GT2} and~\cite{GT3}).  In combinatorics, they are used to find intricate patterns in subsets 
of integers with positive upper density~\cite{FrWi}.  

Nilsequences are defined by evaluating a function along 
the orbit of a point in the homogeneous space of a nilpotent 
Lie group.  In a variety of 
situations, nilsequences have been used to test for a 
lack of uniformity of a function.  Yet, the local properties 
of nilsequences are not well understood.  
It is difficult to detect if a given sequence is a nilsequence, 
particularly if one only knows {\em local} information about the sequence, 
meaning properties that can only be tested on finite intervals.

We recall the definition of a nilsequence.  A {\em basic 
$d$-step nilsequence} is a sequence of the form 
$(f(T^nx)\colon n\in\Z)$, where  
$(X, T)$ is a $d$-step nilsystem, $f\colon X\to \C$ is a 
continuous function,  and $x\in X$. 
 A  {\em $d$-step nilsequence} is a uniform limit of basic $d$-step nilsequences.  (See Section~\ref{sec:nilsystems} for the definition 
 of a nilsystem.)
We give a characterization of nilsequences of 
all orders that can be tested locally, 
generalizing the work in~\cite{HM} that gives such an analysis for $2$-step 
nilsequences.  

We look at finite portions, the ``windows'', of a sequence and we are interested in 
finding a copy of the same finite window up to some given 
precision.  To make this clear, we introduce some notation.  
For a sequence $\ba= (a_n\colon n\in\Z)$, integers $k,j,L$, and a 
real $\delta > 0$, if each entry in the window $[k-L, k+L]$ is 
equal to the corresponding entry in the window $[j-L, j+L]$ up to an 
error of $\delta$, then we write 
\begin{equation}
\label{eq:approx}
\ba_{[k-L, k+L]} =_\delta \ba_{[j-L, j+L]}\ .
\end{equation}

The characterization of almost periodic sequences (which are exactly $1$-step 
nilsequences) by compactness can 
be formulated as follows:

\begin{proposition*}
The bounded sequence $\ba= (a_n\colon n\in\Z)$ of 
complex numbers is almost periodic if and only if 
for all $\varepsilon > 0$, there exist an integer $L\geq 1$ 
and a real $\delta > 0$ such that for any $k,n_1,n_2 \in\Z$
whenever $\ba_{[k-L, k+L]} =_\delta \ba_{[k+n_1-L, k+n_1+L]}$ and 
$\ba_{[k-L, k+L]} =_\delta \ba_{[k+n_2-L, k+n_2+L]}$ then 
$|a_k-a_{k+n_1+n_2}|< \varepsilon$.  
\end{proposition*}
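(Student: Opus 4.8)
The plan is to read both conditions off the orbit closure of $\ba$. Embed $\ba$ in $\C^\Z$, let $S$ be the shift, and let $X$ be the closure of $\{S^n\ba:n\in\Z\}$ in the product topology; then $X$ is compact, $(X,S)$ is a topological dynamical system, the coordinate maps $\pi_j\colon X\to\C$ are continuous with $\pi_j=\pi_0\circ S^j$, and the sequence to compare with $\ba=(\pi_0(S^n\ba))_n$ attached to $x\in X$ is $(\pi_0(S^nx))_n$. Fix a metric $\rho$ for the product topology on $X$ with $\rho(x,y)\ge\min(|\pi_0x-\pi_0y|,1)$. The point of the window relation $=_\delta$ is exactly that $\ba_{[k-L,k+L]}=_\delta\ba_{[j-L,j+L]}$ says that $S^k\ba$ and $S^j\ba$ agree on the coordinates in $[-L,L]$ up to $\delta$, so as $L\to\infty$, $\delta\to0$ it approximates $\rho$-closeness of translates of $\ba$.

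For the direct implication I would argue as follows. If $\ba$ is almost periodic then $(X,S)$ is minimal and equicontinuous, hence conjugate to a rotation $g\mapsto g+\alpha$ on a compact abelian group $G$ with a translation-invariant metric $d$, and $a_n=F(g_0+n\alpha)$ for a continuous $F$. Given $\varepsilon$, first pick $\eta$ with $d(g,g')<2\eta\Rightarrow|F(g)-F(g')|<\varepsilon$ by uniform continuity. Since the family $\{\pi_j\}$ separates points of $X$, a routine compactness argument produces $L,\delta$ such that agreement of two elements of $X$ on $[-L,L]$ up to $\delta$ forces their $G$-images to lie within $\eta$. Then $\ba_{[k-L,k+L]}=_\delta\ba_{[k+n_i-L,k+n_i+L]}$ gives $d(0,n_i\alpha)<\eta$ for $i=1,2$, so $d(0,(n_1+n_2)\alpha)<2\eta$ by translation invariance, whence $|a_k-a_{k+n_1+n_2}|<\varepsilon$.

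For the converse, assume the local condition. First I would transfer it to all of $X$ by passing to limits of translates of $\ba$: for every $\varepsilon$ there are $L,\delta$ so that for all $x\in X$ and $n_1,n_2$, if $x$ agrees with $S^{n_1}x$ and with $S^{n_2}x$ on $[-L,L]$ up to $\delta$ then $|\pi_0x-\pi_0(S^{n_1+n_2}x)|\le\varepsilon$. Next I would show $(X,S)$ is equicontinuous, for which it is enough (a non-equicontinuous compact system carries a regionally proximal pair $(u,v)$ with $u\ne v$, obtained by extracting limits from pairs that are close but separated under some iterate) to show that the regionally proximal relation of $(X,S)$ is trivial. So let $(x,y)$ be regionally proximal; since that relation is shift-invariant and closed it suffices to get $|\pi_0x-\pi_0y|\le\varepsilon$ for every $\varepsilon$. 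Unwinding the definition and approximating the points of $X$ by translates of $\ba$ yields $p_i,q_i,n_i$ with $S^{p_i}\ba\to x$, $S^{q_i}\ba\to y$, $\rho(S^{n_i+p_i}\ba,S^{n_i+q_i}\ba)\to0$, and, along a subsequence, $S^{n_i+p_i}\ba\to z$ for some $z\in X$. Applying the transferred local condition at the centres $n_i+p_i+t$ with increments $(n_{i'}+p_{i'})-(n_i+p_i)$ and $q_i-p_i$ (for $i'\gg i$) and comparing coordinates against $z$, I would first deduce $\rho(S^{q_i-p_i}z,z)\to0$ and then, running the same bookkeeping once more, $|\pi_0x-\pi_0y|\le\varepsilon$. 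Finally, once $(X,S)$ is equicontinuous, covering $X$ by finitely many $\rho$-balls small enough that every $S^n$ displaces each ball by less than $\varepsilon$ in $\rho$ (and $\rho$-closeness dominates $\pi_0$-closeness) breaks $\{S^n\ba\}$ into finitely many pieces of $\ell^\infty$-diameter $\le\varepsilon$, so it is totally bounded in $\ell^\infty$ and $\ba$ is almost periodic.

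The main obstacle is the converse, and within it the triviality of the regionally proximal relation: the hypothesis only constrains a bounded window, so it must be propagated along the orbit, and since each application of it consumes some window radius, the real content is the choice of centres and increments and the simultaneous balancing of the window sizes, the precisions $\delta$, and the unbounded shifts $n_i$ so that the argument closes. Everything else --- the forward direction and the two reductions, first to equicontinuity and then from equicontinuity to total boundedness --- is routine.
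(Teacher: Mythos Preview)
The paper does not give a self-contained proof of this proposition: it is presented as a reformulation of the classical compactness characterisation of almost periodic sequences, and it is simply the $d=2$ instance of Theorem~1.1, whose derivation from the structure theorem (Theorem~1.2) is deferred to \cite{HM}. So there is no detailed ``paper's proof'' to compare against; what one can compare is your argument with the route the paper's machinery would dictate.

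Your forward direction is correct and efficient. Your reductions in the converse --- passing to the orbit closure $(X,S)$, transferring the window condition to every $x\in X$, reducing ``almost periodic'' to ``$(X,S)$ equicontinuous'', and reducing equicontinuity to triviality of the regionally proximal relation --- are all sound.

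The gap is exactly where you locate it: the sketch for killing a regionally proximal pair is not a proof. Each application of the window condition consumes a window of radius $L$ and tolerance $\delta$ but only returns control of the single coordinate $\pi_0$; to feed the output back in you must regain a full window, which forces you to shrink $L$ or inflate $\delta$, and your two-step bookkeeping (``deduce $\rho(S^{q_i-p_i}z,z)\to0$, then run it once more'') does not explain why this cascade terminates. In particular, after the first pass you obtain only that every limit point of $S^{q_i-p_i}z$ is $\varepsilon$-close to $z$ in $\ell^\infty$, with $q_i-p_i$ varying in $i$; it is not clear how a second pass turns this into $|\pi_0x-\pi_0y|\le\varepsilon$.

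A cleaner way to close the converse, in the spirit of the paper, is to observe that your transferred condition says precisely that $(z,z,z,w)\in\QQ^{[2]}(X)$ forces $w=z$: if $u_i\to z$, $S^{m_i}u_i\to z$, $S^{n_i}u_i\to z$, $S^{m_i+n_i}u_i\to w$, apply the condition at $u_i$ to get $|\pi_0z-\pi_0w|\le\varepsilon$, and then use invariance of $\QQ^{[2]}$ under the diagonal shift to get $\pi_jz=\pi_jw$ for all $j$. This is condition~(2) of Theorem~1.2 for $d=2$; Corollary~3.1 then gives distality (hence minimality), and Corollary~4.2 gives $(x,y)\in\RP^{[1]}\Leftrightarrow(y,x,x,x)\in\QQ^{[2]}$, so $\RP^{[1]}$ is trivial and $(X,S)$ is equicontinuous. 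This replaces your delicate double bookkeeping by a single clean passage through $\QQ^{[2]}$.
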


We give a similar characterization for a $(d-1)$-step nilsequence $\ba$:  
if in every interval of a given length 
the translates of the sequence $\ba$ along  
finite sums (i.e. cubes) of any sequence $\bn = (n_1, \ldots, n_{d})$ 
are $\delta$-close to 
the original sequence {\em except} possibly at the 
sum $n_1+\ldots +n_{d}$, then we also 
have control over the distance between $\ba$
and the translate by $n_1+\ldots +n_{d}$. 

The general case is:
\begin{theorem}
\label{theorem:nilseq}
Let $\ba=(a_n: n\in \Z)$ be a bounded 
sequence of complex numbers and let $d\geq 2$ be an integer. The sequence 
$\ba$ is a $(d-1)$-step nilsequence if and only if  for every 
$\varepsilon>0$ there exist an integer $L\geq 1$ and real 
$\delta >0$ such that for any $(n_1,\ldots,n_{d})\in\Z^d$ and 
$k\in\Z$, whenever
$$\ba_{[k+\epsilon_1n_1 +\ldots +\epsilon_dn_d -L, k+\epsilon_1n_1 
+\ldots +\epsilon_dn_d +L]} =_\delta 
\ba_{[k-L, k+L]}$$
for all choices of $\epsilon_1, \ldots, \epsilon_d\in\{0,1\}$ other 
than $\epsilon_1= \ldots = \epsilon_d = 1$, then we have
$|a_{k+n_1+\ldots+n_d} - a_k| < \varepsilon$.  
\end{theorem}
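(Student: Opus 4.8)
The plan is to connect the combinatorial ``local window'' condition on the sequence $\ba$ to the structure theorem for topological dynamical systems (developed later in the paper) that characterizes inverse limits of $(d-1)$-step nilsystems via the cube/face structure and a generalized regionally proximal relation. The key observation is that the condition ``whenever $\ba_{[k+\epsilon_1 n_1+\ldots+\epsilon_d n_d - L, \ldots]} =_\delta \ba_{[k-L,k+L]}$ for all $\epsilon\in\{0,1\}^d$ except $\epsilon=(1,\ldots,1)$, then $|a_{k+n_1+\ldots+n_d}-a_k|<\varepsilon$'' is a statement precisely about the action on the $(d-1)$-dimensional faces of a $d$-cube inside the orbit closure of $\ba$ under the shift.

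First I would pass from the sequence to a topological dynamical system in the usual way: let $\Omega = \{0,1\}^{\Z}$-style shift space, or rather let $\ba \in \ell^\infty(\Z)$, consider the orbit closure $X = \overline{\{\sigma^n \ba : n\in\Z\}}$ in the product topology (with $\sigma$ the shift), and let $a\colon X\to\C$ be the evaluation at coordinate $0$, so that $\ba = (a(\sigma^n \ba) : n\in\Z)$ realizes $\ba$ as a basic sequence over $(X,\sigma)$ with continuous function $a$. Then $\ba$ is a $(d-1)$-step nilsequence if and only if (a suitable factor of) $(X,\sigma)$ is an inverse limit of $(d-1)$-step nilsystems on which $a$ factors through — more precisely, by the structure theorem, $\ba$ is a $(d-1)$-step nilsequence iff $a$ is measurable with respect to, equivalently continuous on, the maximal $(d-1)$-step nilfactor, which by the structure theorem is the factor of $X$ obtained by quotienting by the generalized regionally proximal relation $\mathbf{RP}^{[d-1]}$.

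The heart of the argument is translating the local window condition into the statement ``$(\ba, \ba) \notin \mathbf{RP}^{[d-1]}$ is detected by $a$'', or rather that $a$ respects $\mathbf{RP}^{[d-1]}$. One direction is the soft one: if $\ba$ is a $(d-1)$-step nilsequence, then by definition it is a uniform limit of basic nilsequences, and for a basic $(d-1)$-step nilsequence the face/cube relations on the nilmanifold $X=G/\Gamma$ directly give the implication — if $x$ and the $d$-cube built from $x$ agree on all $2^d-1$ proper faces then the last vertex is forced close to $x$, because in a $(d-1)$-step nilpotent group a $d$-th order commutator vanishes; one then propagates the estimate through the uniform approximation, choosing $L$ large enough that the window determines the relevant coordinates up to $\delta$ and $\delta$ small enough to beat $\varepsilon$. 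The nontrivial direction is the converse: assuming the local condition holds for all $\varepsilon$, one must show $a$ is constant on classes of $\mathbf{RP}^{[d-1]}$ in the orbit closure. Here the window hypothesis, applied along sequences $n_1^{(i)},\ldots,n_d^{(i)}$ realizing a given cube in $X$, forces: if $(x, x^*)$ lies in the generalized regionally proximal relation — which is defined exactly by approximability via such $d$-cubes with $2^d-1$ coordinates returning close — then $|a(x) - a(x^*)| < \varepsilon$ for all $\varepsilon$, hence $a(x)=a(x^*)$. Thus $a$ descends to the quotient by $\mathbf{RP}^{[d-1]}$, which by the structure theorem is an inverse limit of $(d-1)$-step nilsystems, and so $\ba$ is a $(d-1)$-step nilsequence.

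The main obstacle I anticipate is the passage between the \emph{finitary, quantitative} window statement (with its $L$, $\delta$, $\varepsilon$) and the \emph{topological, qualitative} statement about the relation $\mathbf{RP}^{[d-1]}$ on the orbit closure: one must verify that every pair in $\mathbf{RP}^{[d-1]}$ genuinely arises from integer translates with windows matching (this requires a compactness/diagonal argument, extracting limits of the $n_j^{(i)}$ and controlling that ``agreement on a window of length $2L+1$ up to $\delta$'' in the limit becomes ``equality of the corresponding points in $X$''), and conversely that the abstract cube structure can be witnessed by honest shifts of $\ba$. A secondary technical point is handling the non-minimal case — the orbit closure of an arbitrary bounded sequence need not be minimal, so one should either restrict attention to a minimal subsystem or check that the structure theorem and the definition of $\mathbf{RP}^{[d-1]}$ apply in the generality needed; I expect the cleanest route is to reduce to the minimal case or to work with the enveloping-semigroup/universal version of the cube construction. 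Once these bridges are in place, the equivalence follows by combining the structure theorem with the elementary nilmanifold computation described above.
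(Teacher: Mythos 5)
Your overall architecture matches the paper's: the paper does not reprove this theorem in detail but derives it from the structure theorem (Theorem \ref{theorem:main}) exactly as in the $d=2$ case of \cite{HM} --- pass to the shift-orbit closure $Y$ of $\ba$, note that the window hypothesis is a basic-neighborhood statement about the cubes $(\sigma^{k+\bn\cdot\epsilon}\ba\colon\epsilon\in\{0,1\}^d)$ whose closure is $\QQ\type{d}(Y)$ because $\ba$ is a transitive point, and then invoke the equivalence with inverse limits of $(d-1)$-step nilsystems. The forward direction and the compactness bridge between the finitary and topological statements are as you describe.

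The genuine gap is in the converse direction, in what you call a ``secondary technical point.'' You want to quotient $Y$ by $\RP\type{d-1}$ and invoke Theorem \ref{theorem:maxd}, but that theorem --- indeed the fact that $\RP\type{d-1}$ is an equivalence relation at all (Proposition \ref{prop:quaotient1}) --- requires $Y$ to be minimal and distal, and the orbit closure of an arbitrary bounded sequence is neither. Your proposed fixes do not repair this: restricting to a minimal subsystem discards the very point $\ba$ at which the sequence is evaluated. What is actually needed, and what the hypothesis delivers, is that the window condition itself forces minimality and distality of $Y$ before any quotient is taken. Concretely: $\ba$ is proximal to some minimal point $y$, Proposition \ref{prop:rpsequal} puts $(\ba,y,\dots,y)$, hence by a Euclidean symmetry $(y,\dots,y,\ba)$, into $\QQ\type{d}(Y)$, and the limit form of your hypothesis applied to this degenerate cube and to all of its shifts $(\sigma^ky,\dots,\sigma^ky,\sigma^k\ba)$ yields $a_k=y_k$ for every $k$, i.e.\ $\ba=y$ is a minimal point, so $Y$ is minimal; the same argument applied to an arbitrary proximal pair in $Y$ shows $Y$ is distal (this is the mechanism of Corollary \ref{cor:two_distal}, with the conclusion extracted coordinatewise from the function $a$ at every shift rather than from equality of points). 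Only after this step are Corollary \ref{cor:RPRPs} and Theorem \ref{theorem:maxd} available; one then finds, again using shift-invariance of $\RP\type{d-1}$, that $(x,x')\in\RP\type{d-1}(Y)$ forces $x_k=x'_k$ for all $k$, so $\RP\type{d-1}(Y)$ is actually trivial, $Y$ itself is an inverse limit of $(d-1)$-step minimal nilsystems, and $\ba=(a(\sigma^n\ba)\colon n\in\Z)$ is a basic nilsequence over $Y$. Without the minimality and distality step, your appeal to the quotient construction is unjustified.
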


In fact, we can replace the approximation in~\eqref{eq:approx}  in both the hypothesis and conclusion 
by any other approximation that defines pointwise convergence and have the analogous result. 

\subsection{A structure theorem for topological dynamical systems}

We prove a structure theorem for topological dynamical 
systems that gives a  characterization of 
inverse limits of nilsystems.  
Theorem~\ref{theorem:nilseq} follows from this structure theorem, 
exactly as it does in the case for $d=2$ in~\cite{HM}, where the proof 
of this implication can be found.
The structure theorem for topological dynamical systems 
can be viewed as an analog of 
the purely ergodic structure theorem of~\cite{HK1}. 
We introduce the following structure: 
\begin{definition}
\label{def:parallelepipeds}
Let $(X,T)$ be a topological dynamical 
system and let $d\geq 1$ be an integer.  
We define $\QQ\type d(X)$ to be the closure in $X^{2^d}$ 
of elements of the form
$$
(T^{n_1\epsilon_1+\ldots+n_d\epsilon_d}x\colon\epsilon = (\epsilon_{1}, \ldots, \epsilon_{d})\in\{0,1\}^d) \ , 
$$
where $\bn=(n_1, \ldots, n_d)\in\Z^d$, $x\in X$, and we denote a point of $X^{2^d}$ by 
$(x_\epsilon\colon\epsilon\in\{0,1\}^d)$. 
When there is no 
ambiguity,  we write $\QQ\type d$ instead of $\QQ\type d(X)$.
An element of $\QQ\type d(X)$ is called a {\em (dynamical) parallelepiped 
of dimension $d$}.  
\end{definition}

As examples, $\QQ\type 2$ is the closure in $X^{4}$ of the set  
$$\{(x,T^mx,T^nx,T^{n+m}x)\colon x \in X, m,n\in \Z\}$$ and 
$\QQ\type 3$ is the closure in $X^{8}$ of the set  
\begin{multline*}
\bigl\{(x,T^mx,T^nx,T^{m+n}x,T^px,T^{m+p}x,T^{n+p}x,T^{m+n+p}x)\colon \\ x 
\in X, m,n,p\in \Z\bigr\}\ .
\end{multline*}
In each of these, the indices $m,n$ and $m,n,p$ can be taken in $\N$ rather than $\Z$, giving rise to the same object.  
This is obvious if $T$ is invertible, but can also be proved without the assumption of invertibility.  
Thus, throughout the article, we assume that all maps are invertible.

We use these parallelepipeds structures to characterize 
nilsystems:

\begin{theorem}\label{theorem:main}
Assume that $(X,T)$ is a transitive topological dynamical system and let $d\geq 2$ be an integer.  
The following properties are equivalent:
\begin{enumerate}
\item  
\label{it:change-one}
If $\bx,\by\in\QQ\type {d}(X)$ have $2^{d}-1$ coordinates in common, 
then $\bx=\by$.  
\item If $x,y\in X$ are such that $(x, y, \ldots, 
y)\in\QQ\type{d}(X)$, then $x=y$.  
\item 
\label{it:nil}
$X$ is an inverse limit of $(d-1)$-step minimal nilsystems.  
\end{enumerate}
\end{theorem}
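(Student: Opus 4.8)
\textbf{Proof plan for Theorem~\ref{theorem:main}.}
The plan is to establish the cycle $(\ref{it:nil})\Rightarrow(\ref{it:change-one})\Rightarrow(2)\Rightarrow(\ref{it:nil})$, with the first implication being the ``easy'' structural computation and the last being the heart of the matter. For $(\ref{it:nil})\Rightarrow(\ref{it:change-one})$, I would first check the property on a $(d-1)$-step minimal nilsystem $X=G/\Gamma$ directly: the cube group acting on $G^{2^d}$ generated by diagonal left translations and the ``upper face'' translations has the property that its orbit closure $\QQ\type d(G/\Gamma)$ is itself a nilmanifold, and the coordinate at $\epsilon=(1,\ldots,1)$ is determined by the other $2^d-1$ coordinates because the relevant iterated commutator relation forces it (this is the usual ``$(d-1)$-step implies the last vertex of a $d$-cube is a function of the rest'' phenomenon, analogous to the parallelepiped structure in \cite{HK1}). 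Then one passes to an inverse limit: if $X=\varprojlim X_i$ with each $X_i$ satisfying (\ref{it:change-one}), and $\bx,\by\in\QQ\type d(X)$ agree in $2^d-1$ coordinates, their images in each $X_i$ agree in those coordinates, hence are equal by (\ref{it:change-one}) for $X_i$, hence $\bx=\by$. The implication $(\ref{it:change-one})\Rightarrow(2)$ is immediate by taking $\bx=(x,y,\ldots,y)$ and $\by=(y,y,\ldots,y)$, both of which lie in $\QQ\type d(X)$ (the second is the constant point, the first by hypothesis), and noting they share $2^d-1$ coordinates.

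The substantial work is $(2)\Rightarrow(\ref{it:nil})$, and I expect this to be the main obstacle. The strategy I would follow mirrors the ergodic structure theorem of \cite{HK1} transported into the topological category. First I would want to know that $(X,T)$ is a \emph{distal} minimal system: this should follow from property (2) applied inductively — if $(x,x,\ldots,x,y)\in\QQ\type{d}$ forces structure, one can show proximal pairs must be trivial — or alternatively one shows directly that the $\QQ\type d$ relations give enough rigidity to rule out proximality. Then, assuming distality, one defines a tower of factors: let $Z_{d-1}=Z_{d-1}(X)$ be the quotient of $X$ by the equivalence relation generated by ``$(x,x,\ldots,x,x')\in\QQ\type{d}(X)$'', and more generally build the factors $Z_0\leftarrow Z_1\leftarrow\cdots\leftarrow Z_{d-2}\leftarrow X$ where $Z_k$ is the analogous quotient coming from $\QQ\type{k+1}$. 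Property (2) is exactly the statement that the top quotient map $X\to Z_{d-1}$ is the identity, i.e. $X=Z_{d-1}(X)$. One then proves by induction on $k$ that $Z_k(X)$ is an inverse limit of $k$-step minimal nilsystems; the inductive step requires showing the extension $Z_k\to Z_{k-1}$ is (an inverse limit of extensions by) a compact abelian group with a cocycle of the appropriate ``type'', and that distality plus the cube structure forces this cocycle to come from a nilpotent structure. This is where one needs the topological analogues of the key lemmas: that $\QQ\type d(X)$ is an ``ergodic'' (minimal) parallelepiped structure, that the faces and the Gray-code/ergodicity arguments go through, and that one can run the Mackey/Weil-type argument to produce the group quotient.

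The key technical steps, in order, are: (i) derive distality of $(X,T)$ from hypothesis (2); (ii) establish the basic topological properties of the spaces $\QQ\type d(X)$ — minimality under the cube group action, the face maps, symmetries, and the fact that for a distal system the projection to any $2^d-1$ coordinates and gluing of lower-dimensional cubes behave well; (iii) define the factors $Z_k(X)$ via the relations on $\QQ\type{k+1}$ and show they are well-defined factors of $X$; (iv) prove inductively that each extension $Z_k(X)\to Z_{k-1}(X)$ is an inverse limit of isometric (compact abelian group) extensions, using distality to get equicontinuity of the relative structure; (v) identify these iterated group extensions as an inverse limit of $(d-1)$-step nilsystems by invoking a structure result for systems of order $d-1$ (the topological counterpart of the Host--Kra result, which one either proves here or cites). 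The real difficulty lies in steps (iv) and (v): unlike the measure-theoretic setting where one has Mackey groups and disintegration, in the topological setting one must work with everywhere-defined continuous structure, and controlling the cocycles and showing they are cohomologous to ones of ``nilpotent type'' requires careful use of minimality of $\QQ\type{d}(X)$ together with the hypothesis that the top relation collapses. I would isolate this as the central lemma: \emph{if $(X,T)$ is distal minimal and $\QQ\type{d}(X)$ has the unique-completion property (2), then the extension $X\to Z_{d-2}(X)$ is an inverse limit of abelian group extensions whose cocycles are of type $d-1$}, and everything else is bootstrapping from there.
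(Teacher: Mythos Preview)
Your treatment of $(\ref{it:nil})\Rightarrow(\ref{it:change-one})\Rightarrow(2)$ and the reduction of $(2)$ to distality match the paper. The paper also proves $(2)\Rightarrow(\ref{it:change-one})$ under distality via a ``closing parallelepipeds'' argument (your step (ii) essentially), so the first two conditions are shown equivalent before attacking $(\ref{it:nil})$.

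For the main implication $(2)\Rightarrow(\ref{it:nil})$, however, your plan diverges sharply from the paper and, as written, has a real gap. You propose a purely topological inductive tower $Z_0\leftarrow Z_1\leftarrow\cdots\leftarrow X$, showing each extension is an inverse limit of compact abelian group extensions with cocycles of the right type. You correctly identify this as the hard part and isolate it as a ``central lemma''---but you give no mechanism for proving it. The tools you allude to (Mackey groups, cocycle reduction) are measure-theoretic, and you acknowledge that transporting them to the topological category is the obstacle; you do not say how to overcome it. Saying one ``either proves here or cites'' a topological structure theorem for systems of order $d-1$ is circular: that structure theorem \emph{is} Theorem~\ref{theorem:main}.

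The paper sidesteps this entirely. Instead of building the tower topologically, it fixes an ergodic invariant measure $\mu$ on $X$, observes that $\mu\type d$ is concentrated on $\QQ\type d$, and uses hypothesis $(\ref{it:change-one})$ to produce the map $J\colon\QQ\type d_*\to X$ required by the ergodic structure theorem of~\cite{HK1}. That theorem then gives, for free, a \emph{measure-theoretic} isomorphism $\Psi\colon(Y,\nu,T)\to(X,\mu,T)$ with $Y$ an inverse limit of $(d-1)$-step nilsystems. The remaining work is a rigidity lemma: using dual functions $\CD_d f$ and their continuity on $Y$ (property $\prop$), together with the order-$(d-1)$ hypothesis on $X$, one shows $\Psi$ agrees $\nu$-a.e.\ with a topological isomorphism. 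So the paper trades your topological cocycle analysis for the already-proved ergodic structure theorem plus a measure-to-topology upgrade; this is a genuinely different and much shorter route, and it is the idea your plan is missing.
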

(For definitions of all the objects, see Section~\ref{section:parallele}.)
We note that the use of both $d$ and $d-1$ is necessary throughout the article, 
and this leads us to use whichever is notationally more convenient at 
various times in the proofs.

The first property clearly implies the second, since $(y, y, 
\ldots, y)\in\QQ\type{d}(X)$ for all $y\in X$.  
The second property implies that the system is distal (see Section \ref{section:parallele}).  
The second property plus the assumption of distality implies the first property 
(see Section \ref{section:paradistal}), which together give that 
the first two properties are equivalent.

Systems satisfying these properties
 play a key role in the article and so we define:
\begin{definition}
A transitive system satisfying either of the first two equivalent properties of 
Theorem~\ref{theorem:main} is called a {\em system of order $d-1$}.  
\end{definition}

The implication~\eqref{it:nil} $\Rightarrow$ \eqref{it:change-one}  in Theorem~\ref{theorem:main} follows from results in~\cite{HK3} and 
is reviewed here in Proposition~\ref{prop:Qnil}.  
The implication \eqref{it:change-one} $\Rightarrow$ \eqref{it:nil} is 
proved in Section \ref{sec:final}, using completely different methods 
from that used in~\cite{HM}  for $d=3$, and 
proceeds by introducing an invariant measure on $X$.
 
\subsection{The regionally proximal relation and generalizations}
We give a second application of Theorem~\ref{theorem:main} 
in topological dynamics. 
The study of maximal equicontinuous factors is classical (see, 
for example~\cite{Aus}). 
The maximal equicontinuous factor is the topological 
analog of the Kronecker factor in ergodic theory and recovers 
the continuous eigenvalues of a system.  
There are several ways to construct this factor, but the standard 
method is as a quotient of the {\em regionally proximal relation}.  
The first step in generalizing this relation was carried out in~\cite{HM}, 
where the concept of a double regionally  proximal relation is introduced and is 
used in the distal case to define the maximal $2$-step nilfactor. In this article we generalize this relation for 
higher levels and for $d\geq 1$ we define the {\em regionally proximal relation of order $d$}, referring to it as  $\RP \type d$. 
While these generalizations  were motivated by the study of abstract parallelepipeds in additive combinatorics~\cite{HK2}, they require new techniques.
Although we defer the definition of the regionally proximal relation of order $d$
until Section~\ref{section:parallele}, we summarize its uses.  

\begin{proposition}
\label{prop:orderRP}
Assume that $(X,T)$ is a transitive topological dynamical system and that 
$d \geq 1$ is an integer. If the regionally proximal relation of order $d$ on $X$ 
 is trivial, then the system is distal.
 \end{proposition}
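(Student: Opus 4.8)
The plan is to deduce the proposition from the single inclusion $P(X)\subseteq\RP\type d(X)$, where $P(X)$ denotes the proximal relation of $(X,T)$. Indeed, $(X,T)$ is distal precisely when $P(X)=\Delta_X$, and since trivially $\Delta_X\subseteq P(X)$, this inclusion immediately yields the statement: if $\RP\type d(X)$ is trivial, i.e.\ equals $\Delta_X$, then $P(X)\subseteq\Delta_X$, hence $P(X)=\Delta_X$ and the system is distal. So the entire content is to prove that every proximal pair is regionally proximal of order $d$.

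Fix a proximal pair $(x,y)$. By the definition of $\RP\type d$ given in Section~\ref{section:parallele}, it is enough to show that for every $\delta>0$ there exists $\bn=(n_1,\ldots,n_d)\in\Z^d$ with
\[
d\bigl(T^{\epsilon_1n_1+\ldots+\epsilon_dn_d}x,\;T^{\epsilon_1n_1+\ldots+\epsilon_dn_d}y\bigr)<\delta
\qquad\text{for all }\epsilon\in\Cubed\setminus\{\bzero\},
\]
since then $x$ and $y$ themselves witness the degenerate-parallelepiped condition defining $\RP\type d$. The key input is the enveloping semigroup $E(X)\subseteq X^X$: proximality of $(x,y)$ provides $p\in E(X)$ with $px=py$, and since $p$ is a limit of a net of powers of $T$ it commutes with $T$. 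A routine subnet argument then yields, for any integers $m_1,\ldots,m_k$, an element of $E(X^k)$ acting as $p$ on each coordinate, so that the two parallel tuples $(T^{m_1}x,\ldots,T^{m_k}x)$ and $(T^{m_1}y,\ldots,T^{m_k}y)$ are proximal in $(X^k,T\times\ldots\times T)$.

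Given $\delta>0$, I would build $n_1,\ldots,n_d$ recursively. Suppose $n_1,\ldots,n_j$ have been chosen so that, writing $m_\epsilon:=\epsilon_1n_1+\ldots+\epsilon_jn_j$, one has $d(T^{m_\epsilon}x,T^{m_\epsilon}y)<\delta$ for all $\epsilon\in\{0,1\}^j\setminus\{\bzero\}$ (vacuous when $j=0$). Applying the previous paragraph to the $2^j$ integers $(m_\epsilon)_{\epsilon\in\{0,1\}^j}$, the corresponding parallel tuples over $x$ and over $y$ are proximal in $X^{2^j}$, so there is $n_{j+1}\in\Z$ with $d\bigl(T^{m_\epsilon+n_{j+1}}x,\,T^{m_\epsilon+n_{j+1}}y\bigr)<\delta$ for every $\epsilon\in\{0,1\}^j$. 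Together with the inductive hypothesis, which accounts for the faces $\epsilon_{j+1}=0$, this gives the estimate for all nonzero $\epsilon\in\{0,1\}^{j+1}$. After $d$ steps the displayed estimate holds for $\delta$, and letting $\delta\to0$ gives $(x,y)\in\RP\type d(X)$. (Equivalently, one may observe that $\{n\in\Z:d(T^nx,T^ny)<\delta\}$ belongs to an idempotent ultrafilter on $\Z$, hence is an IP set, and read $n_1,\ldots,n_d$ off a finite-sums subset.)

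I expect the main obstacle to be the passage through the enveloping semigroup on the product systems $X^{2^j}$: one must lift the element $p\in E(X)$ witnessing proximality to an element of $E(X^{2^j})$ acting diagonally, which forces passing to a subnet along which $(T^n)^{\times 2^j}$ converges in $(X^{2^j})^{X^{2^j}}$; in the IP-set formulation this is precisely what is repackaged as the assertion that a set of $\delta$-proximality times is an IP set. The remaining ingredients — that $\RP\type d(X)$ is closed and contains the diagonal, that $(X,T)$ is distal iff $P(X)$ is trivial, and the bookkeeping with cube faces — are routine, and one should also check the argument against whatever precise (one- versus two-point) form the definition of $\RP\type d$ takes.
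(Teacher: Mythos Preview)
Your argument is correct. The inductive construction works: if $(x,y)$ is proximal then by uniform continuity of each $T^{m}$, for any finite list $m_1,\dots,m_k$ the pairs $(T^{m_i}x,T^{m_i}y)$ can be made simultaneously $\delta$-close by a single further iterate, and this is exactly what you need at each step. Taking $x'=x$, $y'=y$ in Definition~\ref{def:RP} then gives $(x,y)\in\RP\type d$ directly, so $P(X)\subseteq\RP\type d(X)$ and the conclusion follows.

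This is a genuinely different route from the paper's. The paper does not prove the inclusion $P(X)\subseteq\RP\type d(X)$ in general; instead, in Proposition~\ref{prop:rpsequal} it shows that if $(x,y)$ is proximal \emph{and the orbit closure of $y$ is minimal}, then $(x,y,\dots,y)\in\QQ\type{d+1}$, and hence $(x,y)\in\RP\type d$ via Lemma~\ref{lemma:RPandQQ}. The inductive step there aims for $d(T^{\bn\cdot\epsilon}x,\,y)<\delta$ rather than $d(T^{\bn\cdot\epsilon}x,\,T^{\bn\cdot\epsilon}y)<\delta$, which is why minimality is needed (to return from the common limit point back to $y$). The paper then invokes the classical fact that every point is proximal to a minimal point (Corollary~\ref{cor:two_distal}) to bootstrap: first conclude every point is minimal, then apply the proposition to arbitrary proximal pairs. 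Your approach bypasses both the minimality hypothesis and the appeal to Auslander's minimal-point theorem, at the cost of not producing the stronger conclusion $(x,y,\dots,y)\in\QQ\type{d+1}$ that the paper uses elsewhere. The enveloping-semigroup and IP-set language you mention is not really needed---the bare uniform-continuity argument suffices---but it does no harm.
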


In a distal system, we show that $\RP \type d$ is an equivalence relation and that
it defines the {\em maximal $d$-step topological nilfactor} of the system. 

\begin{theorem}\label{theorem:maxd}
Assume that $(X,T)$ is a distal minimal system and that  $d \geq 1$ is an integer.  
Then the regionally proximal relation  of order $d$ on $X$ is a closed invariant equivalence relation 
and the quotient of $X$ under this relation is its maximal $d$-step nilfactor. 
\end{theorem}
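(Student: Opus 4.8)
The plan is to establish Theorem~\ref{theorem:maxd} by combining the structural dichotomy of Theorem~\ref{theorem:main} with a careful analysis of the relation $\RP\type d$ inside a distal minimal system. First I would record the basic topological facts: $\RP\type d$ is closed by construction (it is defined through a limit/closure condition on the cube spaces $\QQ\type{d+1}(X)$), it is invariant under $T\times T$ because $\QQ\type{d+1}(X)$ is invariant under the diagonal action of $T$, and it is reflexive and symmetric by the symmetry of the cube structure under permuting coordinates. The genuine work is \emph{transitivity}, and this is where distality is essential: in a general minimal system $\RP\type d$ need not be transitive, but for distal systems one expects to gluing two parallelepipeds sharing a face, using that in a distal system the Ellis semigroup is a group of homeomorphisms and every point is minimal for the induced action on the cube space $\QQ\type{d+1}(X)$. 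Concretely, if $(x,y)\in\RP\type d$ and $(y,z)\in\RP\type d$, I would realize these as limits of parallelepipeds in $\QQ\type{d+1}(X)$ with $2^{d+1}-1$ coordinates equal to a common base point, then compose the corresponding ``configurations'' using an enveloping-semigroup element that fixes the shared face, to produce a parallelepiped witnessing $(x,z)\in\RP\type d$.

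Once $\RP\type d$ is a closed invariant equivalence relation, the quotient $Z = X/\RP\type d$ is a well-defined minimal topological dynamical system (minimality passes to factors), and it is still distal, since distality is inherited by factors. The core claim is then that $Z$ is a system of order $d$, i.e. satisfies property~(1) (equivalently~(2)) of Theorem~\ref{theorem:main} with $d$ replaced by $d+1$; by that theorem $Z$ is then an inverse limit of $d$-step minimal nilsystems, hence a $d$-step nilfactor. To see this I would show that $\QQ\type{d+1}(Z)$ is the image of $\QQ\type{d+1}(X)$ under the coordinatewise quotient map (cube spaces push forward onto cube spaces — this is a standard consequence of the definition as a closure of orbits), and then verify that if two elements of $\QQ\type{d+1}(Z)$ agree in $2^{d+1}-1$ coordinates they are equal. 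Unwinding the definition of $\RP\type d$, this is exactly the statement that the relation generated by ``being indistinguishable on $2^{d+1}-1$ cube coordinates'' has been collapsed: by construction, $(x,x')\in\RP\type d$ precisely when $x,x'$ appear as the two differing coordinates of a pair of parallelepipeds in $\QQ\type{d+1}(X)$ agreeing elsewhere, so after quotienting by $\RP\type d$ no such nontrivial pair survives. Here I would use distality again (via Proposition~\ref{prop:orderRP} or its proof) to ensure that the relevant parallelepipeds in $X$ can be taken with the ``changed'' coordinate in the last position $\epsilon = (1,\ldots,1)$, matching the formulation of property~(2).

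Finally I would check maximality: if $\pi\colon X\to W$ is a factor map onto a $d$-step nilfactor (or inverse limit of such), then by Proposition~\ref{prop:Qnil} / the implication (3)$\Rightarrow$(1) of Theorem~\ref{theorem:main}, $W$ is a system of order $d$, so $\RP\type d(W)$ is trivial; since cube spaces push forward, $\pi\times\pi$ maps $\RP\type d(X)$ into $\RP\type d(W) = \Delta_W$, which means $\pi$ factors through the quotient $X\to X/\RP\type d$. Hence $X/\RP\type d$ is the maximal $d$-step nilfactor.

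The main obstacle will be transitivity of $\RP\type d$ in the distal case. Unlike the $d=1$ situation, where the regionally proximal relation is an equivalence relation on \emph{any} minimal system, here one must genuinely exploit distality, and the natural tool is the enveloping semigroup acting on the higher cube spaces $\QQ\type{d+1}(X)$: one needs that this action is itself ``nice'' enough (e.g. the cube space $\QQ\type{d+1}(X)$ is minimal under the face-transformation group, or that idempotents in the relevant Ellis semigroup act as identities) to glue two face-sharing parallelepipeds into one. Setting up the correct semigroup framework on $\QQ\type{d+1}(X)$, and verifying that the gluing produces an element still lying in the \emph{closure} defining $\QQ\type{d+1}(X)$ rather than merely in some larger set, is the delicate part; everything else is either soft topology or a direct appeal to Theorem~\ref{theorem:main}.
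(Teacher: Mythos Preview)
Your outline matches the paper's architecture: prove $\RP\type d$ is a closed invariant equivalence relation, show the quotient is a system of order $d$, invoke Theorem~\ref{theorem:main} to identify it as an inverse limit of $d$-step nilsystems, and check maximality via push-forward of cubes. The maximality argument you give is exactly the paper's.

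However, there is a real gap in how you propose to handle the two hard steps. You write that $(x,y)\in\RP\type d$ can be ``realized as limits of parallelepipeds in $\QQ\type{d+1}(X)$ with $2^{d+1}-1$ coordinates equal to a common base point,'' i.e.\ that $(y,x,x,\ldots,x)\in\QQ\type{d+1}$. This is \emph{not} the definition (which only gives a point of the form $(x,\ba_*,y,\ba_*)$, Lemma~\ref{lemma:RPandQQ}); it is a nontrivial theorem in the distal case (Corollary~\ref{cor:RPRPs}), and it is precisely what makes transitivity and the quotient argument work. The paper obtains it from Proposition~\ref{prop:important} (the ``closing parallelepipeds'' induction), which shows that $\QQ\type d(x)$ equals the closed $\CF\type d$-orbit of $x\type d$. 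This is the technical heart of the argument, and your proposal does not contain it or an equivalent; the ``enveloping-semigroup gluing'' you sketch does not obviously produce this strong form, and without it you cannot get from $(x,\ba_*,y,\ba_*)$ and $(y,\bb_*,z,\bb_*)$ to anything in $\QQ\type{d+1}$ relating $x$ and $z$.

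The same issue resurfaces when you argue that $Z=X/\RP\type d$ is of order $d$. You say ``after quotienting by $\RP\type d$ no such nontrivial pair survives,'' but a lift of a point in $\QQ\type{d+1}(Z)$ to $\QQ\type{d+1}(X)$ need not have $2^{d+1}-1$ equal coordinates; the fibres of the quotient map are large. The paper's proof (Proposition~\ref{prop:quaotient1}) lifts $(a,b,\ldots,b)\in\QQ\type{d+1}(Z)$ to some $\bx\in\QQ\type{d+1}(X)$ and then uses the coordinate-replacement Corollary~\ref{cor:replace} (again a consequence of Proposition~\ref{prop:important}) to swap each $x_\epsilon$ for a fixed $y$ one coordinate at a time, staying inside $\QQ\type{d+1}(X)$ throughout. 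Your argument needs exactly this replacement step; the soft push-forward statement is not enough. So the missing ingredient in both places is the same: the closing/replacement machinery of Section~\ref{section:paradistal}, not a generic Ellis-semigroup argument.
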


The maximal $d$-step (topological) nilfactor is the topological analog of the ergodic theoretic factor ${\mathcal Z}_d$ 
constructed in~\cite{HK1}.  These ergodic factors are characterized by inverse limits of $d$-step nilsystems. 
In this direction, we prove in the distal case that $\RP \type d$ is trivial if and only if the system itself is  an inverse 
limit of $d$-step nilsystems.   

To prove Theorem~\ref{theorem:maxd} we  show in Proposition~\ref{prop:quaotient1} that the quotient of $X$ under $\RP \type d$ is its maximal factor of order 
$d$.  From Theorem~\ref{theorem:main}, we deduce that notions of a system of order $d$ and an inverse limit of $d$-step nilsystems are equivalent, giving us the conclusion.  

We conjecture that the hypothesis of distality in Theorem~\ref{theorem:maxd} 
is superfluous, but were unable to prove this.  

\subsection{Guide to the paper}
The  article is divided into two somewhat distinct parts. In the first part (Sections~\ref{section:parallele} and~\ref{section:paradistal}), we develop the topological theory of parallelepipeds and the associated theory of generalized regionally proximal relations. With the topological methods developed in these sections, we are able to prove all but the implication ``(1) $\Rightarrow$ (3)'' of Theorem~\ref{theorem:main}. 
In Section~\ref{section:parallele}, we state the properties of parallelepiped structures and the relation with 
generalized regionally proximal pairs and show how the conditions of 
Theorem~\ref{theorem:main} imply that the system is distal. 
In Section~\ref{section:paradistal}, we prove that 
in the distal case, the main structural properties of parallelepipeds (the ``property of closing parallelepipeds'') allows us to show 
that first two conditions in Theorem~\ref{theorem:main} are equivalent and to show that regionally proximal relation of order $d$ gives rise to the maximal factor of order $d$.  
The proof of the remaining implication is carried out in Section~\ref{sec:final} and relies heavily on ergodic theoretic notions of Section~\ref{section:preergodic}. 
However, the interaction of the topological and measure theoretic structures plays a key role in the analysis, and it is only via measure theoretic methods that we are finally able to obtain the general topological results. 

\section{Background}

\subsection{Topological dynamical systems}

A {\em transformation} of a compact metric space $X$ is a homeomorphism
of $X$ to itself.  
A {\em topological dynamical system}, referred to more 
succinctly as just a {\em system}, is a pair 
$(X,T)$, where $X$ is a compact metric space and 
$T\colon X\to X$ is a transformation.   
We use $d_X(\cdot,\cdot)$ to denote the metric in $X$ and 
when there is no ambiguity, we write $d(\cdot, \cdot)$. 
We also make use of 
a more general definition of a topological system.
That is, instead of just a single transformation $T$, we consider 
commuting homeomorphisms $T_1,\ldots,T_k$ of $X$ or a countable abelian group of 
transformations. 
We summarize some basic definitions and properties of 
systems in the classical setting of one transformation. 
Extensions to the general case are straightforward.

A {\em factor} of a system $(X,T)$ is another system 
$(Y,S)$ such that there exists a continuous and onto map 
$p\colon X\to Y$ satisfying $S \circ p= p \circ T$.  
The map $p$ is called a {\em factor map}.  
If $p$ is bijective, the two systems are {\em (topologically) conjugate}.
In a slight abuse of notation, when there is no ambiguity, 
we denote all transformations (including ones in possibly distinct systems) by $T$. 

A system $(X,T)$ is {\em transitive} if there exists some point 
$x\in X$ whose orbit  $\{T^n x\colon n\in\Z\}$ is dense 
in $X$ and we call such a point a {\em transitive point} . The system is {\em minimal} if the orbit of any point is dense in $X$. 
This property is equivalent to saying 
that $X$ and the empty set are the only closed 
invariant sets in $X$.

\subsection{Distal Systems}
\label{sec:distal}
The system $(X,T)$ is {\em distal} if for any pair of distinct 
points $x,y \in X$, 
\begin{equation}
\label{eq:distal}
\inf_{n\in \Z} d(T^n x,T^n y) >0\ .
\end{equation}
In an arbitrary system, 
pairs satisfying property~\eqref{eq:distal} are called {\em distal pairs}. 
The points $x$ and $y$ are {\em proximal} 
if $\liminf_{n\to \infty} d(T^nx,T^ny)=0$.  

The following proposition summarizes some basic properties of 
distal systems:

\begin{proposition} (See Auslander~\cite{Aus}, chapters 5 and 7) 
\label{prop:distal}
\begin{enumerate}
\item The Cartesian product of a finite family of 
distal systems is a distal system.
\item If $(X,T)$ is a distal system and $Y$ is a closed and 
invariant subset of $X$, then 
 $(Y,T)$ is a distal system.
\item A transitive distal system is minimal.
\item A factor of a distal system is distal.
\item Let $p\colon X\to Y$ be a factor map between the distal 
systems $(X,T)$ and $(Y,T)$. If 
$(Y,T)$ is minimal, then $p$ is an open map.
\end{enumerate}
\end{proposition}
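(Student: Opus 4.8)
The five statements are classical --- this is why the paper cites Auslander~\cite{Aus} --- so let me sketch how I would organize the proof. Parts (1) and (2) are immediate from~\eqref{eq:distal}: for (1), put on $X_1\times\dots\times X_k$ the metric $\max_i d_{X_i}$ and note that if two points differ in coordinate $i$, so do all their common iterates, by at least $\inf_{n}d_{X_i}(T_i^nx_i,T_i^nx_i')>0$; for (2), restrict the metric and observe that the infimum over $\Z$ in~\eqref{eq:distal} can only increase. For (3)--(5) the right tool is the Ellis enveloping semigroup $E(X)$, the closure of $\{T^n:n\in\Z\}$ in $X^X$ with the product topology, and I would take as the single external input the theorem of Ellis that $(X,T)$ is distal if and only if $E(X)$ is a group.

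For (3): for every $x\in X$ one has $\overline{\{T^nx:n\in\Z\}}=E(X)x$ (the inclusion $\supseteq$ holds because $E(X)$ is compact and $\sigma\mapsto\sigma x$ is continuous, and $\subseteq$ because each $\sigma\in E(X)$ is a pointwise limit of the $T^n$). If $x_0$ is a transitive point then $E(X)x_0=X$; for arbitrary $x$ write $x=\sigma x_0$ with $\sigma\in E(X)$, and since $E(X)$ is a group we get $\overline{\{T^nx:n\in\Z\}}=E(X)x=(E(X)\sigma)x_0=E(X)x_0=X$, so every orbit is dense. For (4): a factor map $p\colon X\to Y$ extends $T^n\mapsto T^n$ to a continuous onto semigroup homomorphism $E(X)\to E(Y)$, so $E(Y)$, being the image of a group, is a group; and if $y_1\ne y_2$ in $Y$ were proximal, a subnet of the witnessing times would converge in the compact set $E(Y)$ to some $\tau$ with $\tau y_1=\tau y_2$, whence $y_1=\tau^{-1}\tau y_1=\tau^{-1}\tau y_2=y_2$, a contradiction; thus $(Y,T)$ is distal.

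Part (5) is where the work really lies. I would first reduce to the case that $X$ is minimal: replacing $X$ by the orbit closure $M=\overline{\{T^nx_0:n\in\Z\}}$ of a point $x_0$ --- which is itself minimal, by the computation in (3) --- one has $p(M)=Y$ because $Y$ is minimal, and openness of $p$ at $x_0$ follows from openness of $p|_M$ at $x_0$; so assume $X$ minimal. Next, show that $p$ is \emph{semi-open}, i.e.\ that $p(U)$ has nonempty interior for every nonempty open $U$: choose a nonempty open $V$ with $\overline V\subseteq U$, cover $X$ by finitely many translates $T^{n_1}V,\dots,T^{n_k}V$ (minimality and compactness), so that $Y=\bigcup_j T^{n_j}\overline{p(V)}$; by the Baire category theorem some $T^{n_j}\overline{p(V)}$, hence $\overline{p(V)}$, has nonempty interior, and $\overline{p(V)}\subseteq p(\overline V)\subseteq p(U)$. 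Promoting this semi-openness to genuine openness at every point is the main obstacle, and it genuinely requires the distality hypothesis: the bijections in $E(X)$ need not even be continuous, so no uniform or direct metric argument is available, and one must bring in the finer structure of distal minimal flows --- either a delicate analysis of the transitive action of the group $E(X)$ on $X$ via the intertwining $p$ provides with the $E(Y)$-action, or the structure theorem presenting $X$ as an inverse limit of isometric extensions. This last step is precisely the content of the cited chapters of Auslander~\cite{Aus}, and I would expect reproducing it to be the bulk of the work.
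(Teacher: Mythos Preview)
The paper gives no proof of this proposition: it simply records the five statements and cites Auslander~\cite{Aus}, chapters 5 and 7, as the source. There is therefore nothing in the paper to compare your argument against.

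That said, your sketches are sound. Parts (1) and (2) are indeed immediate from the definition, and your Ellis-semigroup arguments for (3) and (4) are the standard ones (your single external input, Ellis's theorem that distality is equivalent to $E(X)$ being a group, is exactly the right tool). For (5), your reduction to the minimal case is correct --- in a distal system every orbit closure is minimal by (3), and openness at each point follows from openness of the restriction to the corresponding minimal piece --- and the Baire-category argument for semi-openness is clean. Your honest acknowledgment that promoting semi-openness to openness requires the structure theory of distal flows, and that this is precisely what the cited chapters of~\cite{Aus} supply, matches the paper's own deferral to that reference.
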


Up to the obvious changes in notation, 
this proposition holds for systems with a countable abelian group of 
transformations acting on the space $X$.

For later use, we note the following lemma on distal systems:
\begin{lemma}
\label{lem:extendcontinu}
Let $(X,T)$ and $(Y,T)$ be two minimal systems and assume that 
$(Y,T)$ is distal.  If $X_1$ is a nonempty invariant subset of $X$ and 
$\Phi\colon X_1\to Y$ is a continuous map on $X_1$ with the induced topology and 
commuting with the 
transformations $T$, then $\Phi$ 
has a continuous extension to $X$.
\end{lemma}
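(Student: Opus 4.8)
The plan is to pass to the closure of the graph of $\Phi$ and show that it is the graph of the desired extension. Write $R$ for the closure in $X\times Y$ of $\{(x,\Phi(x)):x\in X_1\}$. Since $X_1$ is invariant and $\Phi$ commutes with $T$, this graph is invariant under $T\times T$, so $R$ is a closed invariant subset of the product system $(X\times Y,T\times T)$. Three easy facts get us started. First, $\overline{X_1}=X$, because the closure of a nonempty invariant set is a nonempty closed invariant set and $X$ is minimal; hence the graph is dense in $R$ and the first projection $\pi_1\colon R\to X$ is onto, its image being a nonempty closed invariant subset of the minimal system $X$. Second, for $x_0\in X_1$ the fiber $\pi_1^{-1}(x_0)$ is exactly $\{(x_0,\Phi(x_0))\}$: if $(x_0,y)\in R$, choose a net $x_\alpha\in X_1$ with $x_\alpha\to x_0$ and $\Phi(x_\alpha)\to y$, and continuity of $\Phi$ on $X_1$ forces $y=\Phi(x_0)$. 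Third, $\pi_2(R)=Y$, by the same minimality argument applied to the invariant set $\Phi(X_1)\subseteq Y$. Thus the statement reduces to showing that $\pi_1$ is injective: then $\pi_1$ is a continuous bijection of compact metric spaces, hence a homeomorphism, and $g:=\pi_2\circ\pi_1^{-1}\colon X\to Y$ is continuous, commutes with $T$ because $R$ is invariant, and restricts to $\Phi$ on $X_1$ by the second fact.

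To prove injectivity of $\pi_1$, introduce the coincidence set
\[
S=\bigl\{\,((a,b),(a,b')): (a,b)\in R,\ (a,b')\in R\,\bigr\}\subseteq (X\times Y)^2,
\]
a closed invariant subset of $(X\times Y)^2$, and its image $\sigma(S)\subseteq Y\times Y$ under the continuous equivariant map $\sigma\big((a,b),(a,b')\big)=(b,b')$. Then $\sigma(S)$ is a closed invariant subset of $Y\times Y$ containing the diagonal $\Delta_Y$ (since $\pi_2(R)=Y$), and the claim $\sigma(S)=\Delta_Y$ says precisely that $(a,b),(a,b')\in R$ forces $b=b'$. Suppose not, and pick $(b_0,b_0')\in\sigma(S)$ with $b_0\ne b_0'$. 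Its orbit closure $N=\overline{\{(T^nb_0,T^nb_0'):n\in\Z\}}$ is a transitive subsystem of $Y\times Y$; since $Y\times Y$ is distal by Proposition~\ref{prop:distal}(1), so is $N$ by Proposition~\ref{prop:distal}(2), hence $N$ is minimal by Proposition~\ref{prop:distal}(3). As $N$ contains the off-diagonal point $(b_0,b_0')$ we have $N\ne\Delta_Y$, and two distinct minimal subsystems are disjoint, so $N\cap\Delta_Y=\emptyset$; that is, every $(b,b')\in N$ has $b\ne b'$.

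Finally, pull $N$ back. The set $\sigma^{-1}(N)\cap S$ is a nonempty closed invariant subset of $S$, and its image under the continuous equivariant map $p\big((a,b),(a,b')\big)=a$ is a nonempty closed invariant subset of $X$, hence all of $X$ by minimality. So there is a point $\big((x_0,b),(x_0,b')\big)\in S$ with $x_0\in X_1$ and $(b,b')\in N$, forcing $b\ne b'$; but $(x_0,b),(x_0,b')\in R$ together with the fiber computation force $b=\Phi(x_0)=b'$, a contradiction. This proves $\sigma(S)=\Delta_Y$ and finishes the argument (the same proof applies verbatim to countable abelian group actions). The one place distality of $Y$ is genuinely used — and the only real obstacle — is this step: distality is exactly what guarantees that an off-diagonal point of $\sigma(S)$ sits on a minimal subsystem of $Y\times Y$ disjoint from $\Delta_Y$, so that the pull-back produces a genuine coincidence over a point of $X_1$; without it, $\sigma(S)$ could contain an off-diagonal point whose orbit closure still meets the diagonal, and the pull-back argument would collapse. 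Everything else is soft: minimality (no proper closed invariant sets) and compactness.
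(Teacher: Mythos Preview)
Your proof is correct and follows the same strategy as the paper: pass to the closure $R=\overline\Gamma$ of the graph, observe that the projection to $X$ is onto and that fibers over points of $X_1$ are singletons by continuity of $\Phi$, and then use distality of $Y$ together with minimality of $X$ to show $\pi_1$ is injective. The only cosmetic difference is in the packaging of the injectivity step: the paper argues directly that for $(x,y),(x,y')\in\overline\Gamma$ one can choose $n_i$ with $T^{n_i}x\to x_1\in X_1$, so that the limits $z,z'$ of $T^{n_i}y,T^{n_i}y'$ satisfy $z=z'=\Phi(x_1)$ and distality forces $y=y'$, whereas you reformulate the same idea via the coincidence set $S$ and the minimality of orbit closures in $Y\times Y$---but the content is identical.
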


\begin{proof}
Let $\Gamma\subset X\times Y$ be the graph of $\Phi$:
$$
 \Gamma=\{(x,\Phi(x))\colon x\in X_1\}\ .
$$
Let $\overline\Gamma$ be 
the closure of $\Gamma$ in $X\times Y$. 
We claim that $\overline\Gamma$ is the graph of some map $\Phi'\colon 
X\to Y$.

The projection of $\overline\Gamma$ on $X$ is a closed invariant 
subset of $X$ containing $X_1$, and by minimality this projection is 
equal to $X$. 
Assume that $x\in X$ and $y,y'\in Y$ are such that $(x,y)$ and 
$(x,y')$ belong to $\overline\Gamma$.  Let $x_1\in X_1$ and chose a 
sequence $(n_i)_{i\in\N}$ of integers such that 
$T^{n_i}x\to x_1$ and such that the 
sequences $(T^{n_i}y)_{i\in\N}$ and  $(T^{n_i}y')_{i\in\N}$ 
converge in $Y$, to the 
points $z$ and $z'$, respectively, as $i\to\infty$. 
Then $(x_1,z)$ and $(x_1,z')$ belong to 
$\overline\Gamma\cap(X_1\times Y)$. 

On the other hand, since $\Phi$ is continuous on $X_1$, we have that 
$\overline\Gamma\cap(X_1\times Y)=\Gamma$ and thus $z=\Phi(x_1)=z'$. 
Since $(Y,T)$ is distal, 
we conclude that $y=y'$ and we have that 
$\overline{\Gamma}$ is the graph of a map $\Phi'\colon X\to Y$.  

The restriction of $\Phi'$ to $X_1$ is equal to $\Phi$ and 
because its graph is closed, $\Phi'$ is continuous. 
Finally, since $X_1$ is invariant and nonempty, 
it is dense in $X$.  By
minimality and density, we conclude
that $\Phi'\circ T=T\circ\Phi'$.
\end{proof}

\subsection{Nilsystems and nilsequences}
\label{sec:nilsystems}

\begin{definition}  
Let $d\geq 1$ be an integer and assume that $G$ is a 
$d$-step nilpotent Lie group and that 
$\Gamma\subset G$ is a discrete, cocompact subgroup of $G$. 
The compact manifold $X = G/\Gamma$ is a {\em $d$-step 
nilmanifold} and $G$ acts naturally on $X$ 
by left translations:  $x\mapsto \tau.x$ for $\tau \in G$. 

If $T$ is left multiplication on $X$ by some fixed element 
of $G$, then $(X,T)$ is called a {\em $d$-step nilsystem}.  
\end{definition}

A $d$-step nilsystem is an example of a 
distal system.
In particular if the nilsystem is transitive, then it is minimal.
Also, the closed orbit of a point in a 
$d$-step nilsystem is topologically conjugate to a $d$-step nilsystem. 
See~\cite{AGH}, \cite{Pa}, and~\cite{Le} for proofs 
and general references on nilsystems.  

We also make use of inverse limits of nilsystems and so we recall 
the definition of an inverse limit of systems (restricting ourselves to the case of 
sequential inverse limits).  
If $(X_i, T_i)_{i\in\N}$ are systems and $\pi_{i}\colon X_{i+1}\to X_i$ 
are factor maps, the {\em inverse limit} of the systems 
is defined to be the compact subset of $\prod_{i\in\N} X_i$ given by 
$$
\{(x_i)_{i\in\N}\colon\pi_i(x_{i+1}) = x_i\} \ .
$$
It is a compact metric space endowed with the 
distance 
$$
d(x,y) = \sum_{i\in\N} 1/2^id_i(x_i,y_i) \ .
$$
We note that the maps $T_i$ induce a transformation $T$ on the 
inverse limit.

Many properties of the 
systems $(X_i, T_i)$ also pass to the inverse limit, including 
minimality, distality, and unique ergodicity.

We return to the definition of a nilsequence:
\begin{definition}  
If $(X = G/\Gamma, T)$  is a $d$-step nilsystem, where $T$ is given by multiplication by the element $\tau\in G$, 
$f\colon X\to \C$ is a  continuous function, and $x\in X$, the sequence 
$(f(\tau^n.x)\colon n\in\Z)$ is a {\em basic $d$-step nilsequence}. 
A uniform limit of basic $d$-step nilsequences is a {\em nilsequence}.  

Equivalently, 
a $d$-step nilsequence is given by  $(f(T^nx)\colon n\in\Z)$, 
where  $(X, T)$ is an inverse limit of $d$-step nilsystems, 
$f\colon X\to \C$ is a continuous function and $x\in X$.
\end{definition}  

The two statements in the definition are shown to be equivalent in
Lemma 14 in~\cite{HM}.
Moreover, in the definition of a $d$-step nilsequence, 
we can assume that the system is minimal. Namely, considering
the closed orbit of $x_0$, this is a transitive and so 
minimal system. 

The $1$-step nilsystems are translations on compact 
abelian Lie groups and $1$-step nilsequences are exactly almost 
periodic sequences (see~\cite{Pa}). Examples of $2$-step nilsequences and a 
detailed study of them are given in~\cite{HK4}.

\section{Dynamical Parallelepipeds: first properties}\label{section:parallele}

\subsection{Notation}

Let $X$ be a set, let $d\geq 1$ an integer, 
and write $[d]=\{1,2,\dots,d\}$. 
We view $\{0,1\}^d$ in one of two ways, either as a sequence 
$\epsilon =\epsilon_1\ldots\epsilon_d$ of 
$0$'s and $1$'s written without commas or parentheses; 
or as a subset of $[d]$.  
A subset $\epsilon$ corresponds to the sequence 
$(\epsilon_1,\ldots,\epsilon_d) \in \Cubed$ 
such that $i \in \epsilon$ if and only if 
$\epsilon_i=1$ for $i \in [d]$.  

If $\bn=(n_1, \ldots, n_d)\in\Z^d$ and $\epsilon\subset[d]$, 
we define 
$$
\bn\cdot\epsilon = \sum_{i=1}^dn_i\epsilon_i  = 
\sum_{i \in \epsilon} n_i \ .
$$

We denote $X^{2^d}$ by $X\type d$. A point 
$\bx\in X\type{d}$ can be  written in one of 
two equivalent ways, depending on the context:  
$$\bx=(x_\epsilon\colon\epsilon \in \Cubed) = (x_\epsilon\colon\epsilon\subset [d]) \ .$$

For $x\in X$, we write $x\type{d} = (x, x, \ldots, x)\in X\type d$.
The diagonal of  $X\type d$ is $\Delta\type{d}=\{x\type d \colon x \in X\}$.

A point $\bx \in X\type{d}$ can be decomposed as $\bx=(\bx',\bx'')$ with 
$\bx',\bx'' \in X\type{d-1}$, where $\bx' = (x_{\epsilon 0}\colon 
\epsilon \in \{0,1\}^{d-1})$ and  $\bx'' = (x_{\epsilon 1}\colon 
\epsilon \in \{0,1\}^{d-1})$.
We can also isolate the first coordinate, writing 
$X_*\type d=X^{2^d-1}$ and then writing a point $\bx\in X\type d$ as 
$\bx = (x, \bx_*)$, where $\bx_* = 
(x_\epsilon\colon\epsilon\neq\emptyset)\in X_*\type d$. 

The {\em faces} of dimension $r$ of a point in $\bx \in X\type d$ are defined as follows.
Let  $J\subset [d]$ with $|J|=d-r$ and $\xi \in \{0,1\}^{d-r}$.  
The elements $(x_\epsilon \colon \epsilon \in\{0,1\}^d , \  \epsilon_J=\xi)$  of $X\type{r}$ 
are called {\em faces } of dimension $r$ of $\bx$, where $ \epsilon_J=(\epsilon_i\colon i \in J)$. 
Thus any face of dimension $r$ defines a natural projection from 
$X\type d$ to $X\type r$, and we call this the {\em projection along 
this face}. 

Identifying $\{0,1\}^d$ with the set of vertices of the Euclidean 
unit cube, a Euclidean isometry of the unit cube 
permutes the vertices of the cube and thus the coordinates 
of a point $x\in X\type d$.  These permutations are 
the {\em Euclidean permutations} of $X\type d$.
Examples of Euclidean permutations are permutations of digits, 
meaning a permutation of $\{0,1\}^d$ induced by a permutation of 
$[d]$, and symmetries, such as replacing $\epsilon_i$ by 
$1-\epsilon_i$ for some $i$.  For $d=2$, an example of a  digit 
permutation is the map $(00, 01, 10, 
11)\mapsto (00, 10, 01, 11)$ and an example of a symmetry is the map $(00, 
01, 10, 11)\mapsto (01, 00, 11, 10)$.  

\subsection{Dynamical parallelepipeds}
\label{subsec:dynparallel}

We recall that $\QQ\type d$ is the closure in $X^{2^d}$ 
of elements of the form
$$
(T^{n_1\epsilon_1+\ldots+n_d\epsilon_d}x\colon\epsilon\in\{0,1\}^d) \ , 
$$
where $\bn=(n_1, \ldots, n_d)\in\Z^d$  and $x\in X$ (Definition \ref{def:parallelepipeds}). 
It follows immediately from the definition that $\QQ\type d$ contains the diagonal. 

Some other basic structural properties of $\QQ\type d$ are: 
\begin{enumerate}
\item
\label{item:faces}
Any face of dimension $r$ of any $\bx\in\QQ\type {d}$ 
belongs to $\QQ\type{r}$.  (This condition is trivial for $d=2$.)
\item
\label{item:Euclidean}
$\QQ\type d$ is 
invariant under the Euclidean permutations of $X\type d$.  
\item
\label{item:QdtoQd+1}
If $\bx\in\QQ\type d$, then $(\bx,\bx)\in\QQ\type{d+1}$. 
\end{enumerate}

\begin{lemma}\label{lemma:Qfactors}
Let $d\geq 1$ be an integer, $(X,T)$ and $(Y,T)$ be systems, and 
$\pi:X\to Y$ be a factor map. Then 
$\QQ\type d(Y)$ is the image of $\QQ\type d(X)$ under 
the map $\pi^{[d]}:=\pi \times \ldots \times \pi$ ($2^d$ times). 
\end{lemma}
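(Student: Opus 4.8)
The plan is to prove the two inclusions separately, using continuity and surjectivity of $\pi$ together with the density of the ``generating'' parallelepipeds in $\QQ\type d$.

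First I would show $\pi^{[d]}(\QQ\type d(X)) \subseteq \QQ\type d(Y)$. Since $\pi$ commutes with $T$, the map $\pi^{[d]}$ sends a generating element $(T^{\bn\cdot\epsilon}x \colon \epsilon \in \Cubed)$ of $\QQ\type d(X)$ to $(T^{\bn\cdot\epsilon}\pi(x) \colon \epsilon \in \Cubed)$, which is a generating element of $\QQ\type d(Y)$. Thus $\pi^{[d]}$ maps the dense set of generating elements of $\QQ\type d(X)$ into $\QQ\type d(Y)$. Because $X^{2^d}$ is compact and $\pi^{[d]}$ is continuous, $\pi^{[d]}(\QQ\type d(X))$ is compact, hence closed; since it contains the generating elements of $\QQ\type d(Y)$ and is closed, it contains their closure $\QQ\type d(Y)$ as well — wait, that gives the reverse inclusion. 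Let me be careful: $\pi^{[d]}(\QQ\type d(X))$ is a closed set containing all generators of $\QQ\type d(Y)$, so it contains $\QQ\type d(Y)$, giving $\QQ\type d(Y) \subseteq \pi^{[d]}(\QQ\type d(X))$. Conversely, $\pi^{[d]}$ maps the generators of $\QQ\type d(X)$ into $\QQ\type d(Y)$, hence (by continuity) maps their closure $\QQ\type d(X)$ into the closure of $\QQ\type d(Y)$, which is $\QQ\type d(Y)$ itself since it is already closed. This gives $\pi^{[d]}(\QQ\type d(X)) \subseteq \QQ\type d(Y)$.

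Combining the two inclusions yields $\pi^{[d]}(\QQ\type d(X)) = \QQ\type d(Y)$, which is the claim. The argument is essentially a soft topological one: a continuous image of a compact set is compact (hence closed), and a closed set containing a dense subset of a set contains that set's closure. The only mild point to get right is the direction of each inclusion and the fact that surjectivity of $\pi$ is what guarantees every generator of $\QQ\type d(Y)$ actually arises as the image of a generator of $\QQ\type d(X)$ (namely, pick any preimage $x$ of the base point $y \in Y$).

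I do not expect any serious obstacle here; the proof is a straightforward application of continuity, compactness, and the definition of $\QQ\type d$ as a closure. If anything requires a moment's care, it is simply bookkeeping to make sure that $\pi^{[d]}$ intertwines the coordinate-wise translation structure — i.e. that $\pi(T^{\bn\cdot\epsilon}x) = T^{\bn\cdot\epsilon}\pi(x)$, which is immediate from $\pi\circ T = T\circ\pi$ by iteration.
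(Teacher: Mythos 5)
Your proof is correct, and it is exactly the argument the authors have in mind: the paper states this lemma without proof, treating it as immediate from the definition of $\QQ\type d$ as a closure. Both inclusions are handled properly --- generators map to generators (using $\pi\circ T=T\circ\pi$ and, for the reverse inclusion, surjectivity of $\pi$ to lift the base point), and compactness of $\QQ\type d(X)$ guarantees that $\pi^{[d]}(\QQ\type d(X))$ is closed, so it absorbs the closure of the generators of $\QQ\type d(Y)$.
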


We can rephrase the definition of $\QQ\type d$ using some groups of 
transformations on $X\type d$.  We define:

\begin{definition}
Let  $(X,T)$ be a system and  $d\geq 1$  be an integer.  
The \emph{diagonal transformation} of $X\type d$ is the map given
 by 
$(T\type d\bx)_\epsilon = Tx_\epsilon$ for every $\bx\in X\type d$ 
and every
$\epsilon\subset[d]$.

For $ j\in [d]$, the {\em face transformation} $T_j\type d: X \type d \to X\type d$ 
is defined for every $\bx \in X\type d$ and $\epsilon \subset [d]$ by:
$$
T_j\type d\bx = 
\begin{cases}
(T_j\type d\bx)_\epsilon = Tx_\epsilon & \text{ if } j \in \epsilon \\
(T_j\type d\bx)_\epsilon = x_\epsilon  & \text{ if } j \notin \epsilon\ .
\end{cases}
$$
The \emph{face group of dimension $d$} is the group $\CF\type d(X)$ 
of transformations of $X\type d$ spanned by the face transformations.
The \emph{parallelepiped group of dimension $d$} is the group
$\CG\type d(X)$ spanned by the diagonal transformation and the face 
transformations.
We often write $\CF\type d$ and $\CG\type d$ instead of $\CF\type 
d(X)$ and $\CG\type d(X)$, respectively.  
For $\CG\type d$ and $\CF\type d$, we use similar notations to that used for $X\type d$: namely, 
an element of either of these groups is written as $S = (S_\epsilon\colon \epsilon\in\{0,1\}^d)$.  
In particular, $\CF\type d = \{S\in\CG\type d\colon S_\emptyset = \id\}$.  
\end{definition}

We note that the group $\CG\type d$ satisfies the three properties~\eqref{item:faces}--~\eqref{item:QdtoQd+1} above, 
with  $\QQ\type d$  replaced by  $\CG\type d$.  
Moreover, for $S\in\CF\type d$, we have that 
$(S,S)\in\CF\type{d+1}$.  As well, 
$\CF\type d$ is invariant under digit permutations.  

The following lemma follows directly from the definitions:

\begin{lemma}
\label{lemma:no-name}
Let $(X,T)$ be a  system and let $d\geq 1$ be an integer.  
Then $\QQ\type d$ is the closure in $X\type d$ of
$$
\{S x\type d\colon S \in\CF\type d, x\in X\}\ .
$$

If $x$ is a transitive point of $X$, then  $\QQ\type d$ is the closed orbit of
$x\type d$ under the group $\CG\type d$.
\end{lemma}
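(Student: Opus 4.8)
The plan is to prove both assertions of Lemma~\ref{lemma:no-name} directly from Definition~\ref{def:parallelepipeds} and the definition of the face group $\CF\type d$, using only continuity of the transformations.

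\emph{First assertion.} I would first observe that a generating element of $\QQ\type d$, namely the point $(T^{\bn\cdot\epsilon}x\colon\epsilon\subset[d])$ for $\bn=(n_1,\dots,n_d)\in\Z^d$ and $x\in X$, can be rewritten in the form $Sx\type d$ where $S=\prod_{j=1}^d (T_j\type d)^{n_j}\in\CF\type d$. Indeed, applying $(T_j\type d)^{n_j}$ advances the coordinate $x_\epsilon$ by $T^{n_j}$ precisely when $j\in\epsilon$, so the composition sends $x\type d=(x,\dots,x)$ to the point whose $\epsilon$-coordinate is $T^{\sum_{j\in\epsilon}n_j}x=T^{\bn\cdot\epsilon}x$; the face transformations commute, so the order of the product is irrelevant. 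This shows that the set $\{Sx\type d\colon S\in\CF\type d,\ x\in X\}$ contains all the generating elements of $\QQ\type d$, hence its closure contains $\QQ\type d$. For the reverse inclusion, I would argue that every $S\in\CF\type d$ is a word in the face transformations $T_j\type d$ and their inverses, and since these commute, $S$ can be written as $\prod_j (T_j\type d)^{m_j}$ for some $(m_1,\dots,m_d)\in\Z^d$; therefore $Sx\type d$ is exactly a generating element of $\QQ\type d$, so $\{Sx\type d\colon S\in\CF\type d,\ x\in X\}\subseteq\QQ\type d$, and taking closures (using that $\QQ\type d$ is already closed) gives equality.

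\emph{Second assertion.} Assume now that $x$ is a transitive point of $X$. The closed orbit of $x\type d$ under $\CG\type d$ is the closure of $\{Sx\type d\colon S\in\CG\type d\}$. Since $\CG\type d$ is generated by $\CF\type d$ together with the diagonal transformation $T\type d$, and since $T\type d$ commutes with every face transformation, any $S\in\CG\type d$ factors as $S=(T\type d)^{k}S'$ with $S'\in\CF\type d$ and $k\in\Z$; moreover $(T\type d)^k x\type d=(T^kx)\type d$. Hence $Sx\type d = S'\,(T^kx)\type d$, which lies in $\{S'y\type d\colon S'\in\CF\type d,\ y\in X\}$, so the $\CG\type d$-orbit of $x\type d$ is contained in $\{Sy\type d\colon S\in\CF\type d,\ y\in X\}$ and its closure is contained in $\QQ\type d$ by the first assertion. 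Conversely, for any $y\in X$ and $S'\in\CF\type d$, pick a sequence $k_i$ with $T^{k_i}x\to y$; then $S'(T^{k_i}x)\type d = (S'(T\type d)^{k_i})x\type d$ is in the $\CG\type d$-orbit of $x\type d$ and converges to $S'y\type d$ by continuity of $S'$, so $\{S'y\type d\colon S'\in\CF\type d,\ y\in X\}$ lies in the closed $\CG\type d$-orbit of $x\type d$; taking closures and invoking the first assertion again yields the reverse inclusion, hence equality.

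The only mildly delicate point — and the one I would state carefully rather than gloss over — is the claim that the face transformations $T_j\type d$ pairwise commute (so that arbitrary words in $\CF\type d$ reduce to products $\prod_j (T_j\type d)^{m_j}$) and that $T\type d$ commutes with each $T_j\type d$; this is immediate from the coordinate-wise formulas in the definition, since on each coordinate $x_\epsilon$ every generator acts as either the identity or $T$, and powers of $T$ commute. Everything else is a routine unwinding of definitions together with the elementary facts that a closed set equals the closure of any dense subset of it and that continuous maps commute with limits.
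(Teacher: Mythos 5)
Your proof is correct and is exactly the routine unwinding of definitions that the paper has in mind (the paper states this lemma without proof, remarking only that it ``follows directly from the definitions''). The two key observations you isolate --- that the commuting face transformations let you identify $\{Sx\type d\colon S\in\CF\type d,\ x\in X\}$ with the set of generating elements of $\QQ\type d$, and that transitivity of $x$ together with continuity recovers all base points $y$ from the single orbit of $x\type d$ under $\CG\type d$ --- are precisely the content of the lemma.
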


\subsection{Definition of the regionally proximal relations}

In this section, we discuss the relation $\RP\type d$ 
and its relation to $\QQ\type{d+1}$.

\begin{definition}\label{def:RP}
Let $(X,T)$ be a system and let $d\geq 1$ be an integer.  
The points $x,y\in X$ are said to be {\em regionally 
proximal of order $d$} if for any $\delta > 0$, there exist $x', y'\in X$ 
and a vector $\bn = (n_1, \ldots, n_d)\in \Z^d$ such that
$d(x,x')< \delta$, $d(y, y')<\delta$, and
$$
d(T^{\bn\cdot\epsilon}x', T^{\bn\cdot\epsilon}y') 
<\delta \text{ for any nonempty } \epsilon\subset [d] .
$$
(In other words, there exists $S\in\CF\type d$ such that 
$d(S_\epsilon\cdot x', S_\epsilon\cdot y')<\delta$ for every 
$\epsilon\neq\emptyset$.)
We call this the {\em regionally 
proximal relation of order $d$} and denote the set of regionally proximal 
points by $\RP\type d$ (or by $\RP\type d(X)$ in case of ambiguity).

Since $\RP\type{d+1}$ 
is finer than $\RP\type d$, we have defined a nested 
sequence of closed and invariant relations. 
\end{definition}

\begin{lemma}
\label{lemma:RPandQQ} 
Assume that $(X,T)$ is a transitive system 
and that $d\geq 1$ is an integer.  Then 
$(x,y)\in\RP\type d$ if and only if there exists $\ba_* \in X_*\type d$ such that 
$$(x, \ba_*, y, \ba_*) \in \CQ\type {d+1}$$
\end{lemma}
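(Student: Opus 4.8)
The plan is to prove both directions by unpacking the definitions and using the structural properties of $\QQ\type{d+1}$ together with the description of $\QQ\type{d+1}$ as a closed orbit given in Lemma~\ref{lemma:no-name}.

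\medskip

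\emph{The ``if'' direction.} Suppose there is $\ba_*\in X_*\type d$ with $(x,\ba_*,y,\ba_*)\in\QQ\type{d+1}$. Writing a point of $X\type{d+1}$ as a pair of two points of $X\type d$, this says $(\bx',\bx'')\in\QQ\type{d+1}$ where $\bx' = (x,\ba_*)$ and $\bx'' = (y,\ba_*)$. Fix $\delta>0$. By definition of $\QQ\type{d+1}$ as a closure, there exist $z\in X$ and $\bn=(n_1,\dots,n_{d+1})\in\Z^{d+1}$ such that the point $(T^{\bn\cdot\eta}z\colon\eta\in\{0,1\}^{d+1})$ is within $\delta$ of $(x,\ba_*,y,\ba_*)$ in $X\type{d+1}$. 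Split $\eta = (\epsilon,\eta_{d+1})$ with $\epsilon\in\{0,1\}^d$. The ``$\bx'$'' half ($\eta_{d+1}=0$) gives a point of $\QQ\type d$ close to $(x,\ba_*)$, coming from $z$ and $(n_1,\dots,n_d)$; the ``$\bx''$'' half ($\eta_{d+1}=1$) gives a point of $\QQ\type d$ close to $(y,\ba_*)$, coming from $T^{n_{d+1}}z$ and $(n_1,\dots,n_d)$. Set $x' = z$, $y' = T^{n_{d+1}}z$, and $S\in\CF\type d$ the element with $S_\epsilon = T^{(n_1,\dots,n_d)\cdot\epsilon}$. Then $d(x,x')<\delta$, $d(y,y')<\delta$, and for each nonempty $\epsilon\subset[d]$ the $\epsilon$-coordinate $S_\epsilon x' = T^{\bn\cdot(\epsilon,0)}z$ is within $\delta$ of $a_\epsilon$, while $S_\epsilon y' = T^{\bn\cdot(\epsilon,1)}z$ is also within $\delta$ of $a_\epsilon$; by the triangle inequality $d(S_\epsilon x', S_\epsilon y')<2\delta$. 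Since $\delta$ was arbitrary this shows $(x,y)\in\RP\type d$.

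\medskip

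\emph{The ``only if'' direction.} Suppose $(x,y)\in\RP\type d$. For each $m\in\N$ apply the definition with $\delta = 1/m$ to get $x_m,y_m\in X$ and $\bn^{(m)}\in\Z^d$ with $d(x,x_m)<1/m$, $d(y,y_m)<1/m$, and $d(T^{\bn^{(m)}\cdot\epsilon}x_m,\,T^{\bn^{(m)}\cdot\epsilon}y_m)<1/m$ for every nonempty $\epsilon\subset[d]$. Now form, inside $X\type{d+1}$, the point built from $\bn^{(m)}$ extended by an extra coordinate: consider $p_m := \big(T^{(\bn^{(m)},k_m)\cdot\eta}x_m\colon\eta\in\{0,1\}^{d+1}\big)$ for a suitable $k_m\in\Z$. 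The intent is that the $\eta_{d+1}=0$ half reads off the $\QQ\type d$-point generated by $x_m$ and $\bn^{(m)}$, and the $\eta_{d+1}=1$ half reads off the $\QQ\type d$-point generated by $T^{k_m}x_m$ and $\bn^{(m)}$; we want to choose $k_m$ so that $T^{k_m}x_m$ is close to $y_m$. This is exactly where the main difficulty lies: $y_m$ need not literally be on the orbit of $x_m$, so we cannot simply pick such a $k_m$ directly from the finite data. The remedy is to work instead with the orbit closure and a limiting/compactness argument: each $p_m\in\QQ\type{d+1}$ provided we \emph{define} it using elements of $\CF\type{d+1}$ acting on the diagonal. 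Concretely, by Lemma~\ref{lemma:no-name}, $(u,S u\type d)\in\QQ\type{d}$ for $u\in X$, $S\in\CF\type d$, and similarly $(S,S)\in\CF\type{d+1}$, so the element $(x_m,\ S^{(m)}x_m{}\type d,\ y_m,\ S^{(m)} y_m{}\type d)$ — where $S^{(m)}\in\CF\type d$ has $S^{(m)}_\epsilon = T^{\bn^{(m)}\cdot\epsilon}$ — need not be in $\QQ\type{d+1}$ because the two ``halves'' come from different base points. However, passing to a subsequence so that $\bn^{(m)}$, $x_m$, $y_m$ and all the intermediate coordinates converge, the limit point is $(x,\ba_*,y,\ba_*')$ for some $\ba_*,\ba_*'\in X_*\type d$; the approximate-equality $d(T^{\bn^{(m)}\cdot\epsilon}x_m,T^{\bn^{(m)}\cdot\epsilon}y_m)<1/m$ forces $\ba_* = \ba_*'$ in the limit. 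It remains to check the limit point actually lies in $\QQ\type{d+1}$, not merely in $X\type{d+1}$; for this one uses that $\QQ\type{d+1}$ is closed and that each approximating point, after a further small perturbation within $1/m$, is genuinely of the generating form $(T^{(\bn^{(m)},k_m)\cdot\eta}z_m)$ — obtained by invoking transitivity (Lemma~\ref{lemma:no-name} identifies $\QQ\type{d+1}$ with a closed $\CG\type{d+1}$-orbit) to realize $(x_m,y_m)$ approximately on a single orbit through an appropriate choice of base point $z_m$ and shift $k_m$. Taking limits then gives $(x,\ba_*,y,\ba_*)\in\QQ\type{d+1}$, as desired.

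\medskip

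The step I expect to be the main obstacle is precisely the ``realization'' issue in the second direction: turning the two separately-specified near-points $x_m,y_m$ into a single point of $\QQ\type{d+1}$ of the correct shape $(x,\ba_*,y,\ba_*)$ with the \emph{same} $\ba_*$ appearing in both halves. The clean way around it is to characterize $\QQ\type{d+1}$ via the closed $\CG\type{d+1}$-orbit of the diagonal point $z\type{d+1}$ of a transitive $z$ (Lemma~\ref{lemma:no-name}), observe that the vector $\bn = (\bn^{(m)}, k_m)$ with $k_m$ chosen so $T^{k_m}z \approx y$ produces a genuine element of $\QQ\type{d+1}$ whose $\eta_{d+1}=0$ face is close to $(x,\ba_*)$ and whose $\eta_{d+1}=1$ face is close to $(y,\ba_*)$, and then let $\delta\to 0$ using closedness of $\QQ\type{d+1}$. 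The bookkeeping with the index splitting $\eta=(\epsilon,\eta_{d+1})$ and the identification $\bn\cdot(\epsilon,0) = \bn^{(m)}\cdot\epsilon$, $\bn\cdot(\epsilon,1) = \bn^{(m)}\cdot\epsilon + k_m$ is routine once this framework is in place.
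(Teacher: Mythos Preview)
Your approach is correct and matches the paper's: both directions unwind the closure definition of $\QQ\type{d+1}$, and for the ``only if'' direction the key move is exactly what you identify in your final paragraph---use transitivity to pick a transitive point $z$ near $x'$, then an integer $k$ with $T^kz$ near $y'$, so that $(S,S)(T\type{d+1}_{d+1})^k z\type{d+1}$ is a genuine element of $\QQ\type{d+1}$ close to $(x,\ba_*,y,\ba_*)$. Your write-up of that direction is unnecessarily circuitous: the paper goes straight to the transitive-point construction without the intermediate false start involving $(x_m,S^{(m)}x_m\type d,y_m,S^{(m)}y_m\type d)$, and you should do the same, being explicit that continuity of the fixed, finitely many maps $S_\epsilon$ is what lets you control all coordinates once $z$ and $T^kz$ are chosen close enough to $x'$ and $y'$.
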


\begin{proof}
Assume that $(x,y)\in\RP\type d$.  Let $\delta>0$
and let $x',y'$ and $S$ be as in the  definition 
of regionally proximal points.
As  transitive points are dense in $X$, there exists a transitive 
point $z$ with $d(z,x')<\delta$ and, 
for every 
$\epsilon\neq\emptyset$,
$d(S_\epsilon\cdot z, S_\epsilon\cdot x')<\delta$.
 There exists an integer $k$ such that 
$d(T^k z,y')<\delta$ and that, for every $\epsilon\neq\emptyset$,
$d(S_\epsilon \cdot T^k z,S_\epsilon \cdot y')<\delta$.
We have that $d(z,x)<2\delta$, $d(T^k z,y)<2\delta$ and 
$d(S_\epsilon\cdot \ T^k z, S_\epsilon\cdot \  z)<3\delta$.

Define $\bz\in X\type{d+1}$ by
$z_{\epsilon 0} = S_\epsilon\cdot z$ and 
$z_{\epsilon 1} = S_\epsilon\cdot T^k z$ for $\epsilon\in\{0,1\}^d$.
Then $\bz=(S,S)(T\type{d+1}_{d+1})^kz\type{d+1}$ and thus 
this point belongs to $\QQ\type{d+1}$. We have that 
$d(z_\emptyset,x)<2\delta$, $d(z_{00\ldots 01},y)<2\delta$ and 
$d(z_{\epsilon 0},z_{\epsilon 1})<3\delta$ for every 
$\epsilon\in\{0,1\}^d$ different from $\emptyset$.
Letting $\delta\to 0$ and passing to a subsequence, we have a point 
of $\QQ\type{d+1}$ of the announced form.  

Conversely, if $(x,\ba_*,y,\ba_*) \in\QQ\type{d+1}$ with $\ba_* \in 
X_*\type{d}$, 
then for every $\delta > 0$, there exist $z\in X$, $\bn\in\Z^{d}$, and $p\in\Z$ 
such that $d(z,x)<\delta$, $d(T^pz,y)< \delta$, and 
$d(T^{\bn\cdot \epsilon}z, a_\epsilon) < \delta$ and
$d(T^{\bn\cdot\epsilon + p}z, a_\epsilon)< \delta$ for every nonempty 
$\epsilon\subset [d]$.  
Thus $(x,y)\in\RP\type d$.
\end{proof}

\begin{corollary}
\label{lemma:stupid}
Assume that $(X,T)$ is a transitive system 
and that $d\geq 1$ is an integer. The relation $\RP\type d(X)$ is a closed, symmetric relation that is invariant 
under $T$.

If $\phi\colon X\to Y$ is a factor map and 
if $(x, y)\in\RP\type{d}(X)$, then 
$(\phi(x), \phi(y))\in\RP\type{d}(Y)$.  
\end{corollary}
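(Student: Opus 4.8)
The plan is to derive all four assertions of the corollary --- closedness, symmetry, invariance under $T$, and compatibility with factor maps --- directly from Definition~\ref{def:RP}, keeping Lemma~\ref{lemma:RPandQQ} in reserve as an alternative route for each. \emph{Symmetry} is immediate: the conditions $d(x,x')<\delta$, $d(y,y')<\delta$, $d(T^{\bn\cdot\epsilon}x',T^{\bn\cdot\epsilon}y')<\delta$ are literally unchanged under the simultaneous exchange $x\leftrightarrow y$, $x'\leftrightarrow y'$, so $(x,y)\in\RP\type d$ forces $(y,x)\in\RP\type d$. (Equivalently, one applies the Euclidean symmetry $\epsilon_{d+1}\mapsto 1-\epsilon_{d+1}$ to the point $(x,\ba_*,y,\ba_*)\in\QQ\type{d+1}$ furnished by Lemma~\ref{lemma:RPandQQ}, using invariance of $\QQ\type{d+1}$ under Euclidean permutations.)

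For \emph{invariance under $T$}, I would fix $(x,y)\in\RP\type d$ and $\delta>0$, use uniform continuity of $T$ to pass to a finer scale $\delta'$, take $\delta'$-approximants $x',y'$ and $\bn$ witnessing $(x,y)\in\RP\type d$, and check that $Tx',Ty'$ together with the same $\bn$ witness $(Tx,Ty)\in\RP\type d$ at scale $\delta$; this works because $T$ commutes with every $T^{\bn\cdot\epsilon}$. Since $T\inv$ is also uniformly continuous, the same argument gives invariance under $T\inv$, hence $\RP\type d$ is invariant as a relation. For \emph{closedness}, given $(x_k,y_k)\in\RP\type d$ converging to $(x,y)$, I would, for each $\delta>0$, choose $k$ large enough that $\delta/2$-witnesses for $(x_k,y_k)$ become $\delta$-witnesses for $(x,y)$ by the triangle inequality; alternatively, extract from the points $(x_k,\ba_*^{(k)},y_k,\ba_*^{(k)})\in\QQ\type{d+1}$ a subsequential limit, using compactness of $X_*\type d$ and closedness of $\QQ\type{d+1}$, and invoke Lemma~\ref{lemma:RPandQQ}.

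For the \emph{factor map} statement, I would argue in the same spirit: given a factor map $\phi\colon X\to Y$, a pair $(x,y)\in\RP\type d(X)$, and $\delta>0$, use uniform continuity of $\phi$ to choose $\delta'$, take $\delta'$-witnesses $x',y',\bn$ for $(x,y)$, and verify that $\phi(x'),\phi(y'),\bn$ witness $(\phi(x),\phi(y))\in\RP\type d(Y)$ --- the point being $\phi\circ T=T\circ\phi$, so $\phi$ intertwines $T^{\bn\cdot\epsilon}$ on $X$ with $T^{\bn\cdot\epsilon}$ on $Y$. (Alternatively: $Y$ is transitive, being a factor of a transitive system, so one may apply $\phi$ coordinatewise to the point of $\QQ\type{d+1}(X)$ given by Lemma~\ref{lemma:RPandQQ}, observe via Lemma~\ref{lemma:Qfactors} that the image lies in $\QQ\type{d+1}(Y)$ and still has the special form $(\phi(x),\bb_*,\phi(y),\bb_*)$, and apply Lemma~\ref{lemma:RPandQQ} to $Y$.)

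I do not expect a genuine obstacle here; the corollary is a formal consequence of the definition. The only points requiring attention are that the chosen approximants behave well under $T$, $T\inv$, and $\phi$, which is exactly what uniform continuity delivers, and --- if one takes the $\QQ\type{d+1}$ route --- that $\QQ\type{d+1}$ is closed (being a closure), invariant under the diagonal transformation and under Euclidean permutations (from its definition and property~\eqref{item:Euclidean}), and maps onto $\QQ\type{d+1}(Y)$ under a factor map.
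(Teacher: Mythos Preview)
Your proposal is correct and matches the paper's approach: the paper's entire proof reads ``This follows immediately from the definition and Lemma~\ref{lemma:RPandQQ},'' and you have simply unpacked what ``immediately'' means, giving both the direct $\delta$-approximant verification and the $\QQ\type{d+1}$ alternative for each clause. There is nothing to add.
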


\begin{proof}
This follows immediately from the definition and Lemma~\ref{lemma:RPandQQ}.
\end{proof}

If the first property of Theorem~\ref{theorem:main} holds, then the 
relation $\RP\type d$ is trivial: if $(x,\ba_*, y, 
\ba_*)\in\QQ\type{d+1}$, then $(x, \ba_*)\in\QQ\type d$ and so 
$(x, \ba_*, x, \ba_*)\in\QQ\type{d+1}$.  By the first property of 
Theorem~\ref{theorem:main}, $x=y$.

\subsection{Reduction to the distal case}
\label{sec:reduce}

We show that systems verifying the conditions of 
Theorem~\ref{theorem:main} are distal.  

\begin{proposition}
\label{prop:rpsequal}
Assume $(X,T)$ is a transitive system  and  that $d\geq 1$ is an integer. 
If $x$ and $y$ are proximal and the closed orbit of $y$ is a minimal  
set, then 
$(x,y,y, \ldots, y)\in\QQ\type d$.
\end{proposition}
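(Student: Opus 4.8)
The plan is to prove the statement by induction on $d$, the essential ingredient being a preliminary observation about the proximal pair $(x,y)$. Put $Y=\overline{\{T^ny\colon n\in\Z\}}$, which is minimal by hypothesis. First I would establish that there is a sequence $k_i\in\Z$ with $T^{k_i}x\to y$ and $T^{k_i}y\to y$ simultaneously. To see this, work in $(X\times X,T\times T)$: proximality gives $n_i$ with $d(T^{n_i}x,T^{n_i}y)\to0$, so after passing to a subsequence $T^{n_i}x\to w$ and $T^{n_i}y\to w$ for one and the same $w$; since $Y$ is closed and invariant, $w\in Y$, and by minimality $\overline{\{T^nw\colon n\in\Z\}}=Y$. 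Hence the $(T\times T)$-orbit closure of $(w,w)$ equals $\{(v,v)\colon v\in Y\}\ni(y,y)$, and it is contained in the orbit closure of $(x,y)$; therefore $(y,y)$ lies in the orbit closure of $(x,y)$, which is the claim. Applying the same reasoning to the diagonal transformation $T\type d$ of $X\type d$ shows that $y\type d$ belongs to the closure of the $T\type d$-orbit of the point $\ba\in X\type d$ having $x$ in coordinate $\emptyset$ and $y$ in all other coordinates, since that orbit is $\{(T^kx,T^ky,\dots,T^ky)\colon k\in\Z\}$. For $d=1$ this already suffices: $(x,T^{k_i}x)\in\QQ\type1$ for all $i$, and $(x,T^{k_i}x)\to(x,y)$, so $(x,y)\in\QQ\type1$.

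For the inductive step, assume $\ba=(x,y,\dots,y)\in\QQ\type d$. By the diagonal property~\eqref{item:QdtoQd+1} of parallelepipeds, $(\ba,\ba)\in\QQ\type{d+1}$. Decomposing $X\type{d+1}=X\type d\times X\type d$ as in the notation above, the face transformation $T_{d+1}\type{d+1}$ acts by $(\bu,\bv)\mapsto(\bu,T\type d\bv)$, and $\QQ\type{d+1}$ is invariant under $\CF\type{d+1}$ (by Lemma~\ref{lemma:no-name}, since $\CF\type{d+1}$ permutes the generating family $\{Sx\type{d+1}\colon S\in\CF\type{d+1},\ x\in X\}$ among itself). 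Hence $\bigl(\ba,(T\type d)^k\ba\bigr)\in\QQ\type{d+1}$ for every $k$; taking $k=k_i$, using $(T\type d)^{k_i}\ba\to y\type d$ and closedness of $\QQ\type{d+1}$ gives $(\ba,y\type d)\in\QQ\type{d+1}$. Finally, the coordinate of $(\ba,y\type d)$ at $\emptyset$ is the coordinate of $\ba$ at $\emptyset\in\{0,1\}^d$, namely $x$, and all its other coordinates equal $y$, so $(\ba,y\type d)=(x,y,\dots,y)\in\QQ\type{d+1}$, completing the induction.

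The step carrying the real content is the first one: chaining a common limit $w\in Y$ with minimality of $Y$ to manufacture a single sequence of times along which $T^nx$ and $T^ny$ both return near $y$. Without minimality of $\overline{\{T^ny\}}$ both this step and the conclusion fail. Everything after it is formal, using only already-established structural properties of $\QQ\type d$: closedness, the diagonal property~\eqref{item:QdtoQd+1}, and $\CF\type{d+1}$-invariance of the parallelepiped sets.
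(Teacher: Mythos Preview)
Your proof is correct, and the opening observation---that minimality of $\overline{\{T^ny\}}$ upgrades proximality to the existence of a single sequence $k_i$ with $T^{k_i}x\to y$ and $T^{k_i}y\to y$---is exactly the claim with which the paper begins. From that point on, however, you take a different and arguably cleaner route. The paper does not induct on $d$: instead it iterates the claim $d$ times, at each stage choosing a new $n_j$ small enough (via uniform continuity of the already-chosen powers $T^{n_1},\dots,T^{n_{j-1}}$) that $d(T^{\bn\cdot\epsilon}x,y)<\delta$ for all nonempty $\epsilon$, thereby producing an explicit element of the generating set of $\QQ\type d$ near $(x,y,\dots,y)$. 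Your inductive argument sidesteps these continuity bookkeeping steps entirely, trading them for the structural properties of $\QQ\type d$ already recorded in Section~\ref{subsec:dynparallel}: closedness, the diagonal property~\eqref{item:QdtoQd+1}, and $\CF\type{d+1}$-invariance (which, as you note, follows from Lemma~\ref{lemma:no-name}). The paper's approach has the virtue of being self-contained and exhibiting a concrete cube; yours is shorter and makes clearer that the whole content of the proposition lies in the first step.
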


\begin{proof}
First we claim that for every $\eta > 0$, there exists $n\in\N$ such 
that $d(T^nx,y) < \eta$ and $d(T^ny,y) < \eta$. 
Since $x$ and $y$ are proximal, there exists a sequence $(m_i\colon 
i\geq 1)$ and a point $z\in X$ such that $T^{m_i}x\to z$ and 
$T^{m_i}y\to z$.  We have that $z$ belongs to the closed orbit of 
$y$, which is minimal, and so $y$ belongs to the closed orbit of 
$z$.  Thus there exists $p$ such that $d(T^pz, y)< \eta/2$.  
By continuity of $T^p$, for $i$ sufficiently large we have that 
$d(T^{m_i+p}x, y) < \eta$ and 
$d(T^{m_i+p}y, y) < \eta$.  
Setting $n = m_i+p$ for some sufficiently large $i$, we have $n$ that 
satisfies the claim.

Fix $\delta > 0$.  Applying the claim for $\eta = \delta/d$, we find some $n_1$ such that 
$d(T^{n_1}x, y) < \delta/d$ and $d(T^{n_1}y, y) < \delta/d$. 

Taking $\eta$ with $0< \eta < \delta/d$ such that $d(T^{n_1}u,T^{n_1}v) \leq \delta/d$ when
$d(u,v)\leq \eta$, and then taking $n_2$ associated to this $\eta$, from the claim we have that: 
$d(T^{n_1\epsilon_1 + n_2\epsilon_2}x, y) < 2\delta/d$ and 
$d(T^{n_1\epsilon_1 + n_2\epsilon_2}y, y) < 2\delta/d$ 
for all $\epsilon_1, \epsilon_2\in\{0,1\}^2$ other than 
$\epsilon_1=\epsilon_2 =0$.  

Thus by induction, there is a sequence of integers $n_1,  \ldots, 
n_d$ such that 
$d(T^{\bn\cdot\epsilon}x, y) < \delta$
for all $\emptyset\neq\epsilon\subset[d]$ . 
Taking $\delta\to 0$, we have the statement of the proposition.  
\end{proof}

\begin{corollary}
\label{cor:two_distal}
Assume that $(X,T)$ is a transitive system.  
If the second property of Theorem~\ref{theorem:main} holds, then 
$X$ is distal.
\end{corollary}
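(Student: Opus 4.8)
\textit{Proof proposal.} The plan is to argue by contradiction. Assume $(X,T)$ is transitive, satisfies the second property of Theorem~\ref{theorem:main}, and is \emph{not} distal. Then there is a proximal pair $a,b\in X$ with $a\neq b$. From this I want to produce a proximal pair one of whose members has a minimal closed orbit, so that Proposition~\ref{prop:rpsequal} applies; combined with the second property, this will force the two members of that pair to coincide, contradicting that the original pair is nontrivial.

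The one external ingredient is the classical fact (see Auslander~\cite{Aus}; e.g.\ via a minimal idempotent in a minimal left ideal of the enveloping semigroup) that every point of a system is proximal to some \emph{minimal point}, that is, to a point whose closed orbit is a minimal set. Applying this to $a$, I obtain a minimal point $y$ with $(a,y)$ proximal, and then split into two cases. If $y\neq a$, then $(a,y)$ is a proximal pair of distinct points with the closed orbit of $y$ minimal, so Proposition~\ref{prop:rpsequal} gives $(a,y,y,\dots,y)\in\QQ\type d(X)$; the second property of Theorem~\ref{theorem:main} then yields $a=y$, a contradiction. If instead $y=a$, then $a$ itself is a minimal point, so its closed orbit is minimal; since proximality is symmetric, $(b,a)$ is a proximal pair of distinct points, and the same combination of Proposition~\ref{prop:rpsequal} and the second property gives $b=a$, again a contradiction. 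In either case we reach a contradiction, so $X$ must be distal.

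The main obstacle --- indeed essentially the only non-routine point --- is the input that every point is proximal to a minimal point; this is where the enveloping-semigroup machinery enters, and everything else is bookkeeping. Two small points to check when writing the details: that Proposition~\ref{prop:rpsequal} is invoked in the correct orientation (the member with minimal closed orbit occupies the repeated coordinates), and that the case $y=a$ genuinely uses the original distinct pair $(a,b)$ rather than the degenerate pair $(a,a)$.
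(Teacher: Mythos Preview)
Your proof is correct and follows essentially the same route as the paper: both use the classical fact that every point is proximal to a minimal point, then combine Proposition~\ref{prop:rpsequal} with the second property of Theorem~\ref{theorem:main}. The paper organizes it slightly more directly---first showing every point equals the minimal point it is proximal to (your Case~1 analysis, which in fact does not need the ``not distal'' assumption), hence every point is minimal, and then applying Proposition~\ref{prop:rpsequal} to an arbitrary proximal pair---whereas you package the same two steps as a case split inside a proof by contradiction.
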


\begin{proof}
We first show that any point in $X$ is minimal, i.e. its closed orbit is minimal, 
and so the system is minimal.
Every $x\in X$ is proximal to some minimal point $y$ (see \cite{Aus}).  By the previous 
proposition and the hypothesis, $x=y$ and so $x$ is a minimal point. 
Applying the proposition 
to any pair of proximal points, the statement follows.
\end{proof}

\section{Parallelepipeds in distal systems}\label{section:paradistal}

\subsection{Minimal distal systems and parallelepiped structures}
\begin{lemma}
\label{lemma:Qminimal}
Let $(X,T)$ be a minimal distal system and let $d\geq 1$ be an integer. 
Then $(\QQ\type d, \CG\type d)$ is a minimal distal system.
\end{lemma}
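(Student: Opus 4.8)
The plan is to prove distality first, since it is essentially automatic, and then deduce minimality from transitivity. For distality: by Proposition \ref{prop:distal}(1), the Cartesian product $X^{2^d}$ with the diagonal action of $T$ (acting the same way on each coordinate) is distal, being a finite product of copies of the distal system $(X,T)$. Now $\QQ\type d$ is a closed subset of $X^{2^d}$ that is invariant under the full group $\CG\type d$, hence in particular under the subgroup generated by $T\type d$. The group $\CG\type d$ is generated by $T\type d$ together with the face transformations $T_j\type d$, and every element $S\in\CG\type d$ acts on $\QQ\type d$ as a product of powers of $T$ in the various coordinates; what matters for distality is that each generator, and hence each element of $\CG\type d$, commutes with $T\type d$ and that $T\type d$ itself is already distal on $\QQ\type d$. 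More precisely, if $\bx\neq\by$ in $\QQ\type d$, then $x_\epsilon\neq y_\epsilon$ for some coordinate $\epsilon$, so $\inf_{n}d(T^n x_\epsilon, T^n y_\epsilon)>0$, whence $\inf_n d_{X\type d}(T\type d{}^n\bx,\,T\type d{}^n\by)>0$, and since $T\type d\in\CG\type d$ this already shows $(\bx,\by)$ is a distal pair for the $\CG\type d$-action. (Alternatively, one invokes that a closed invariant subsystem of a distal system is distal, via an extension of Proposition \ref{prop:distal}(2) to group actions, as remarked after that proposition.)

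For minimality: by Proposition \ref{prop:distal}(3) (in its group-action form, as noted in the excerpt), it suffices to show $(\QQ\type d,\CG\type d)$ is transitive, and then distality upgrades transitivity to minimality. Since $(X,T)$ is minimal, pick any transitive point $x\in X$; by Lemma \ref{lemma:no-name}, $\QQ\type d$ is precisely the closed orbit of $x\type d$ under $\CG\type d$. Hence $x\type d$ is a transitive point for $(\QQ\type d,\CG\type d)$, so the system is transitive, and by distality it is minimal.

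I expect the only genuine subtlety to be bookkeeping about group actions rather than single transformations: the results quoted (Proposition \ref{prop:distal}, the notion of distal pair in Section \ref{sec:distal}) are phrased for a single homeomorphism, but the excerpt explicitly states that these extend "up to the obvious changes in notation" to a countable abelian group — and $\CG\type d$ is finitely generated abelian, acting by commuting homeomorphisms, so this applies. The one point to state carefully is that a transitive distal action of a (countable abelian) group is minimal; this follows exactly as in the single-transformation case, since distality forces the orbit closure of every point to be a minimal set and transitivity then forces that set to be all of $\QQ\type d$. No new ideas beyond Lemma \ref{lemma:no-name} and Proposition \ref{prop:distal} are needed.
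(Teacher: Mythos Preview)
Your proof is correct and follows essentially the same approach as the paper: establish distality of $(\QQ\type d,\CG\type d)$ via the product/subsystem route (Proposition~\ref{prop:distal}(1),(2), extended to group actions), then use Lemma~\ref{lemma:no-name} to get transitivity, and conclude minimality from Proposition~\ref{prop:distal}(3). The paper's proof is simply a terser version of what you wrote, omitting the explicit coordinatewise distality argument and the discussion of the group-action extension.
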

\begin{proof}
Since $(X,T)$ is distal, so is the system 
$(X\type d, \CG\type d)$.  
Since $\QQ\type d$ is a closed and invariant subset 
of $X\type d$ under the face transformations, 
the system $(\QQ\type d, \CG\type d)$ 
is also distal.  By the second part of 
Lemma~\ref{lemma:no-name}, the system is transitive and thus 
is minimal. 
\end{proof}

Using the Ellis semigroup, Eli Glasner~\cite{glasner} showed us a proof that this lemma holds without 
the assumption of distality.

\begin{proposition}
\label{prop:equiv}
Let $(X,T)$ be a minimal distal system and let $d\geq 1$ be an integer. 
The relation $\sim_{d-1}$ defined on 
$\QQ\type{d-1}$ by
$$\bx\sim_{d-1} \bx' \text{ if and only 
if the element } (\bx, \bx')\in X\type d  \text{ belongs to }
\QQ\type d
$$ 
is an equivalence relation.  
\end{proposition}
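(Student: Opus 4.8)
The plan is to verify the three defining properties of an equivalence relation on $\QQ\type{d-1}$. Reflexivity is immediate: for $\bx \in \QQ\type{d-1}$, property~\eqref{item:QdtoQd+1} of the parallelepiped structures gives $(\bx, \bx) \in \QQ\type{d}$, so $\bx \sim_{d-1} \bx$. Symmetry follows from the Euclidean permutation invariance of $\QQ\type d$ (property~\eqref{item:Euclidean}): the map exchanging $\epsilon 0 \leftrightarrow \epsilon 1$ (the symmetry replacing $\epsilon_d$ by $1 - \epsilon_d$) is a Euclidean permutation sending $(\bx, \bx')$ to $(\bx', \bx)$, so if $(\bx, \bx') \in \QQ\type d$ then $(\bx', \bx) \in \QQ\type d$. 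The substance of the proposition is transitivity: given $(\bx, \by) \in \QQ\type d$ and $(\by, \bz) \in \QQ\type d$, I must produce $(\bx, \bz) \in \QQ\type d$.

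For transitivity the key tool is Lemma~\ref{lemma:Qminimal}: since $(X,T)$ is minimal distal, $(\QQ\type d, \CG\type d)$ is a minimal distal system, and likewise $(\QQ\type{d-1}, \CG\type{d-1})$. I would argue as follows. Fix $\by \in \QQ\type{d-1}$ and consider the set $E_\by = \{ \bx \in \QQ\type{d-1} : (\bx,\by) \in \QQ\type d \}$ — the equivalence "class" candidate. Because $\QQ\type d$ is closed and the face group $\CF\type{d-1}$ (acting on the first $2^{d-1}$ coordinates, identified with $\CF\type{d}$ restricted to faces $\epsilon$ with $\epsilon_d = 0$) leaves $\QQ\type d$ invariant while fixing $\by$ — more precisely, for $S \in \CF\type{d-1}$ the element $(S, \id)$ lies in $\CF\type d$ — the set $E_\by$ is a closed subset of $\QQ\type{d-1}$ invariant under $\CF\type{d-1}$. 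Similarly one checks $E_\by$ is invariant under the diagonal transformation $T\type{d-1}$ by applying $T\type d = (T\type{d-1}, T\type{d-1})$, which preserves $\QQ\type d$ and moves $\by$; to handle the fact that $\by$ itself moves, I use minimality of $(\QQ\type{d-1}, \CG\type{d-1})$: the orbit of $\by$ under $\CG\type{d-1}$ is dense, so $E_\by$ being $\CG\type{d-1}$-invariant and closed and nonempty (it contains $\by$) forces $E_\by = \QQ\type{d-1}$. This would actually prove something stronger than transitivity — that $\sim_{d-1}$ is the full relation — which is too strong and therefore wrong, so this naive argument must be corrected.

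The correct approach: transitivity should be extracted from the structure of $\QQ\type{d+1}$ by a "gluing of parallelepipeds" argument. Given $(\bx, \by), (\by, \bz) \in \QQ\type{d}$, form the point $\bw \in X\type{d+1}$ with $\bw_{\epsilon 0} = $ (appropriate coordinate built from $(\bx,\by)$) and $\bw_{\epsilon 1} = $ (from $(\by,\bz)$), arranged so that the middle face $\epsilon_{d+1} = 1, \epsilon_d = 0$ reads $\by$; I claim $\bw \in \QQ\type{d+1}$ using the face property~\eqref{item:faces}, Euclidean invariance, and the description of $\QQ\type{d+1}$ via $\CG\type{d+1}$ from Lemma~\ref{lemma:no-name}. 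The outer face of $\bw$ obtained by projecting along $\epsilon_d + \epsilon_{d+1} = 1$ then realizes $(\bx, \bz)$ as a face of a $(d+1)$-parallelepiped, hence lies in $\QQ\type d$. The main obstacle — and where I expect to spend the real effort — is showing that this glued point $\bw$ genuinely belongs to $\QQ\type{d+1}$: this requires using minimality/distality (Lemma~\ref{lemma:Qminimal}) to lift the relation $(\by, \bz) \in \QQ\type d$ to a statement compatible with a fixed choice realizing $(\bx, \by) \in \QQ\type d$, i.e. a closing/continuity argument in the minimal distal system $(\QQ\type d, \CG\type d)$ applied to the fiber over $\by$. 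This is precisely the "property of closing parallelepipeds" advertised in the introduction, and transitivity of $\sim_{d-1}$ is its first payoff.
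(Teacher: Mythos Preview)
Your handling of reflexivity and symmetry is correct and matches the paper.  Your first approach to transitivity is indeed wrong, as you correctly diagnose.  But your second approach is not a proof: it is a sketch with the hard step left as an acknowledged obstacle, and the strategy you outline---going up to $\QQ\type{d+1}$ and invoking a ``closing parallelepipeds'' principle---is both vague and circular in this paper's logical order.  The closing property (Proposition~\ref{prop:closing}) is proved \emph{after} the present proposition, via the substantial Proposition~\ref{prop:important}; you cannot appeal to it here.

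The paper's argument is both simpler and stays entirely within $\QQ\type d$.  Given $(\bu,\bv),(\bv,\bw)\in\QQ\type d$, use minimality of $(\QQ\type d,\CG\type d)$ (Lemma~\ref{lemma:Qminimal}) to find $S_i\in\CG\type d$ with $S_i(\bu,\bv)\to z\type d$ for some $z\in X$.  The key structural observation is that every $S\in\CG\type d$ decomposes as $S=(S',S'')$ with $S',S''\in\CG\type{d-1}$, so $S'_i\bu\to z\type{d-1}$ and $S''_i\bv\to z\type{d-1}$.  Passing to a subsequence so that $S''_i\bw\to\hat\bz$, you now have two convergences to the \emph{same} point $(z\type{d-1},\hat\bz)$: first $(S''_i,S''_i)(\bv,\bw)\to(z\type{d-1},\hat\bz)$, and since $(S''_i,S''_i)\in\CG\type d$ this point lies in $\QQ\type d$; second $S_i(\bu,\bw)=(S'_i\bu,S''_i\bw)\to(z\type{d-1},\hat\bz)$, so this point is in the $\CG\type d$-orbit closure of $(\bu,\bw)$.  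By distality that orbit closure is minimal, hence $(\bu,\bw)$ lies in the orbit closure of $(z\type{d-1},\hat\bz)\in\QQ\type d$, and therefore $(\bu,\bw)\in\QQ\type d$.  No passage to dimension $d+1$ is needed.
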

\begin{proof}
By Property~\eqref{item:Euclidean} of Section~\ref{subsec:dynparallel}, we have that the relation is 
symmetric and by Property~\eqref{item:QdtoQd+1}, it is reflexive.  We are left 
with showing that the relation is transitive. 
Let $\bu, \bv, \bw\in\QQ\type{d-1}$ and assume that 
$(\bu,\bv)\in\QQ\type d$ and $(\bv, \bw)\in\QQ\type d$.

Choose $z\in X$.  
By Lemma~\ref{lemma:Qminimal}, the system 
$(\QQ\type d, \CG\type d)$ is minimal and so it is the closed orbit 
of $z\type d$
under the group $\CG\type d$.   
There exists a sequence $(S_{i}\colon i\geq 1)$ such that 
$S_{i}(\bu, \bv)\to z\type d=(z\type{d-1}, z\type{d-1})$ as $i\to \infty$.  
Writing $S_i = (S'_i, S''_i)$ with $S'_i, S''_i\in \CG\type{d-1}$, we have that 
$S'_i\bu\to z\type{d-1}$ and $S''_i\bv\to z\type{d-1}$.

Passing to a subsequence if needed, we can assume that 
$S''_i\bw$ 
converges to some point $\hat{\bz}\in X\type{d-1}$ as $i\to\infty$.  
We have that 
$$(S''_i, S''_i)(\bv,\bw)\to (z\type{d-1}, \hat{\bz})\in X\type d\ .
$$
But for each $i\in\N$, 
$(S''_i, S''_i)\in \CG\type d$ and thus 
$(z\type{d-1}, \hat{\bz})$ belongs to the closed orbit of $(\bv,\bw)$ under 
$\CG\type d$ and so $(z\type{d-1}, \hat{\bz})\in\QQ\type d$.

On the other hand, $S_i(\bu, \bw) = (S'_i\bu, S''_i\bw)$ converges to 
$(z\type{d-1}, \hat{\bz})$  and this point belongs to the closed orbit of $(\bu, \bw)$ under 
$\CG\type d$.  
By distality this orbit is minimal and so it follows 
that $(\bu, \bw)$ also belongs to the orbit 
closure of $(z\type{d-1}, \hat{\bz})$.  
In particular, $(\bu, \bw)\in\QQ\type d$ and the relation
$\sim_{d-1}$ is transitive. 
\end{proof}

\begin{corollary}
\label{cor:trans}
Let $(X,T)$ be a minimal distal system and let $d\geq 1$ be an integer. 
If $\bx,\by\in\QQ\type{d+1}$ and $x_\epsilon=y_\epsilon$ for all 
$\epsilon\neq\emptyset$, then $(x_\emptyset,y_\emptyset)\in\RP\type d$.
\end{corollary}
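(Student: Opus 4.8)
The plan is to realize the pair $(x_\emptyset, y_\emptyset)$ as the first coordinates of a suitable element of $\QQ\type{d+1}$ of the form $(x, \ba_*, y, \ba_*)$, so that Lemma~\ref{lemma:RPandQQ} (or rather its distal-minimal analog) applies. Recall that by Lemma~\ref{lemma:RPandQQ}, $(x_\emptyset, y_\emptyset)\in\RP\type d$ if and only if there exists $\ba_*\in X_*\type d$ with $(x_\emptyset, \ba_*, y_\emptyset, \ba_*)\in\QQ\type{d+1}$. So the task reduces to producing such an $\ba_*$ from the given $\bx,\by\in\QQ\type{d+1}$ that agree off the empty coordinate.

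First I would decompose the hypothesis. Write $\bx = (\bx', \bx'')$ and $\by = (\by', \by'')$ with $\bx', \bx'', \by', \by''\in X\type d$, where the primed parts index $\epsilon_{d+1}=0$ and the double-primed parts index $\epsilon_{d+1}=1$. By Property~\eqref{item:faces}, $\bx',\bx'',\by',\by''$ all lie in $\QQ\type d$. The hypothesis $x_\epsilon = y_\epsilon$ for all $\epsilon\neq\emptyset$ says precisely that $\bx'' = \by''$ (these are faces not involving the empty set of $[d+1]$) and that $\bx'$ and $\by'$ agree except in their first coordinate, i.e. $\bx' = (x_\emptyset, \bx'_*)$ and $\by' = (y_\emptyset, \bx'_*)$ for a common $\bx'_*\in X_*\type d$. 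Now the idea is to swap the last pair of faces: consider the point $\bw\in X\type{d+1}$ whose $\epsilon_{d+1}=0$ part is $\bx'$ and whose $\epsilon_{d+1}=1$ part is $\by'$, that is $\bw = (\bx', \by') = (x_\emptyset, \bx'_*, y_\emptyset, \bx'_*)$. This has exactly the shape required by Lemma~\ref{lemma:RPandQQ}, so it suffices to show $\bw\in\QQ\type{d+1}$.

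To show $\bw\in\QQ\type{d+1}$ I would invoke Proposition~\ref{prop:equiv} together with Lemma~\ref{lemma:Qminimal}. Since $\bx = (\bx',\bx'')\in\QQ\type{d+1}$ we get $\bx'\sim_d \bx''$; since $\by = (\by',\by'') = (\by', \bx'')\in\QQ\type{d+1}$ we get $\by'\sim_d \bx''$; by symmetry and transitivity of $\sim_d$ (Proposition~\ref{prop:equiv}, applied at level $d+1$, so the relation lives on $\QQ\type d$), we conclude $\bx'\sim_d \by'$, i.e. $(\bx', \by')\in\QQ\type{d+1}$, which is exactly $\bw\in\QQ\type{d+1}$. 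Then Lemma~\ref{lemma:RPandQQ} gives $(x_\emptyset, y_\emptyset)\in\RP\type d$ with witness $\ba_* = \bx'_*$ — modulo the minor point that Lemma~\ref{lemma:RPandQQ} is stated for transitive systems, which is fine here since a minimal system is transitive.

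The main obstacle I anticipate is bookkeeping rather than mathematical depth: one must be careful that the decomposition $\{0,1\}^{d+1}\to\{0,1\}^d\times\{0,1\}$ used to split $\bx,\by$ is the one isolating the last digit $\epsilon_{d+1}$, and simultaneously that the $X_*\type d$-decomposition isolating the first coordinate $\emptyset$ of $[d]$ is being used on the primed parts — these are two different splittings and conflating them would break the argument. The other point requiring a line of care is that Proposition~\ref{prop:equiv}'s relation $\sim_d$ is defined on $\QQ\type d$, so one needs that $\bx',\bx'',\by'$ genuinely lie in $\QQ\type d$, which is Property~\eqref{item:faces}. Everything else is a direct chaining of the already-established lemmas.
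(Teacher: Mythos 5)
Your proposal is correct and is essentially identical to the paper's own proof: the same splitting of $\bx$ and $\by$ along the last digit, the same application of symmetry and transitivity of the relation from Proposition~\ref{prop:equiv} (at level $d+1$) to get $(x_\emptyset,\ba_*,y_\emptyset,\ba_*)\in\QQ\type{d+1}$, and the same conclusion via Lemma~\ref{lemma:RPandQQ}. The only difference is that you spell out the bookkeeping the paper leaves implicit (and your index $\sim_d$ is the one consistent with Proposition~\ref{prop:equiv}, where the paper writes $\sim_{d+1}$).
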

\begin{proof}
We write $\bx=(x_\emptyset, \ba_*,\bz)$ with $\ba_*\in X\type d_*$ 
and $\bz\in\QQ\type d$.  By hypothesis, $\by=(y_\emptyset,\ba_*,\bz)$ and by 
transitivity of relation $\sim_{d+1}$, we have that
$(x_\emptyset,\ba_*,y_\emptyset,\ba_*)\in\QQ\type{d+1}$.
We conclude via Lemma~\ref{lemma:RPandQQ}. 
\end{proof}
\subsection{Completing parallelepipeds}

\begin{notation}
For $x\in X$ and $d\geq 1$, write
$$
\QQ\type d(x)=\{\by\in\QQ\type d\colon y_\emptyset =x\}\ .
$$
\end{notation}

In this section, we show:
\begin{proposition}
\label{prop:QK}
For $x\in X$ and $d\geq 1$, $\QQ\type d(x)$ is the closed orbit of 
$x\type d$ under the action of the group $\CF\type d$.
\end{proposition}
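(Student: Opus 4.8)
The plan is to prove both inclusions. The inclusion $\overline{\CF\type d \cdot x\type d} \subset \QQ\type d(x)$ is immediate: $\CF\type d$ fixes the first coordinate (since $S_\emptyset = \id$ for $S \in \CF\type d$), $\QQ\type d$ is closed and $\CF\type d$-invariant, and $x\type d \in \QQ\type d$, so the orbit closure stays inside $\{\by \in \QQ\type d : y_\emptyset = x\}$. The substance is the reverse inclusion: given $\by \in \QQ\type d$ with $y_\emptyset = x$, show $\by$ lies in the closed $\CF\type d$-orbit of $x\type d$.

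First I would fix a transitive point — wait, we are in the distal minimal setting implicitly (this section is ``Parallelepipeds in distal systems''), so I will assume $(X,T)$ minimal distal. By Lemma~\ref{lemma:Qminimal}, $(\QQ\type d, \CG\type d)$ is minimal, hence it is the closed $\CG\type d$-orbit of $x\type d$. So there is a sequence $(S_i)$ in $\CG\type d$ with $S_i x\type d \to \by$. Write $S_i = T\type d{}^{m_i} R_i$ with $R_i \in \CF\type d$ and $m_i \in \Z$ (using that $\CG\type d$ is generated by $\CF\type d$ together with the diagonal transformation, and the diagonal commutes appropriately with the face group — more precisely every element of $\CG\type d$ can be written as a diagonal power times a face-group element, since conjugating a face transformation by the diagonal gives another face transformation). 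Then $S_i x\type d = T\type d{}^{m_i} (R_i x\type d)$, and the $\emptyset$-coordinate is $T^{m_i} x$. Since this converges to $y_\emptyset = x$ and $X$ is distal, I want to upgrade convergence of the $\emptyset$-coordinate to control over $T\type d{}^{m_i}$. The key device: in a minimal distal system, if $T^{m_i} x \to x$ then, passing to a subsequence, $T\type d{}^{m_i}$ converges in the Ellis semigroup to an element $u$ of the group of the enveloping semigroup fixing $x\type d$; but more elementarily, one can argue that the orbit closure of $(R_i x\type d, x\type d)$ under the diagonal action in $\QQ\type d \times \QQ\type d$ is minimal (product of distal is distal), so from $T\type d{}^{m_i}(R_i x\type d, x\type d) \to (\by, \hat\bz)$ with $\hat\bz$ having $\emptyset$-coordinate $x$, one can run the ``return'' argument as in the proof of Proposition~\ref{prop:equiv}: find $S'_j \in \CG\type d$ (in fact diagonal powers) pushing $(\by,\hat\bz)$ back toward $(\text{something}, x\type d)$, and by minimality deduce $\by$ is in the closed $\CF\type d$-orbit of a point with $\emptyset$-coordinate $x$ that is $\CG\type d$-conjugate to $x\type d$ — hence of $x\type d$ itself.

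The cleaner route, which I would actually write, is: apply Proposition~\ref{prop:equiv}'s transitivity-style argument directly. Decompose $\by = (x, \by_*)$. I want an $S \in \CF\type d$ with $S x\type d = \by$, approximately. From $S_i x\type d \to \by$ with $S_i \in \CG\type d$, write $S_i = T\type d{}^{m_i} R_i$, $R_i \in \CF\type d$. Passing to a subsequence, $R_i x\type d \to \bw \in \QQ\type d(x)$ (its $\emptyset$-coordinate is $x$ since $R_i$ fixes it), and $T\type d{}^{m_i} \bw \to \by$, so in particular $T^{m_i} x \to x$. Now apply minimality of $(\QQ\type d, \CG\type d)$ again to the pair: the point $(\bw, x\type d) \in \QQ\type d \times \QQ\type d$ has a minimal orbit closure under the diagonal $\Z$-action (distality). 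Since $T^{m_i}x \to x$, by distal minimality there is a further subsequence and $n_j$ with $T\type d{}^{n_j}(\by, \hat\bw) \to (\bw, x\type d)$ where $\hat\bw = \lim T\type d{}^{m_i}\bw$-type limit; then $\by$ lies in the closed diagonal orbit of $\bw$, and since $\bw$ lies in the closed $\CF\type d$-orbit of $x\type d$, and diagonal orbits of points in $\QQ\type d(x)$... this needs care because the diagonal does not fix $x$.

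The honest main obstacle, which I expect to be the crux, is exactly this: relating the diagonal transformation (which moves the base point $x$) to the face group (which fixes it), to conclude that the $\CG\type d$-orbit closure of $x\type d$, intersected with the fiber over $x$, is no bigger than the $\CF\type d$-orbit closure of $x\type d$. The right tool is the enveloping (Ellis) semigroup $E(\QQ\type d)$: in the minimal distal system $(\QQ\type d,\CG\type d)$, $E$ is a group, and the stabilizer-type argument shows any element of $E$ fixing $x\type d$ that is built from diagonal powers can be absorbed. Concretely: take $\by = u x\type d$ for $u \in E(\QQ\type d, \CG\type d)$; write $u$ as a limit of $S_i = T\type d{}^{m_i} R_i$; since $E$ is a group, there is $v \in E$ with $vu = $ identity-on... — rather, since $u x\type d$ has $\emptyset$-coordinate $x = $ ($\emptyset$-coordinate of $x\type d$), and $E(X,T)$ is a group acting on $X$ with the projection $E(\QQ\type d) \to E(X)$ a homomorphism, the image $\bar u \in E(X)$ fixes $x$; in a distal minimal system $E(X)$ is a group so $\bar u$ is invertible, and one shows $u \in E(\CF\type d)$-closure, i.e.\ $\by$ is in the $\CF\type d$-orbit closure. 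I would write this using the Ellis semigroup to make the base-point bookkeeping rigorous, citing that for distal minimal systems the enveloping semigroup is a group (Auslander~\cite{Aus}); the remaining steps are then routine diagram-chasing through the homomorphism $E(\QQ\type d) \to E(X)$ and the factorization $\CG\type d = \CF\type d \rtimes \langle T\type d\rangle$ at the level of enveloping semigroups.
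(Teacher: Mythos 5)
There is a genuine gap, and it sits exactly where you suspect it does. Your easy inclusion ($\overline{\CF\type d\cdot x\type d}\subset \QQ\type d(x)$) is fine. For the reverse inclusion, everything reduces to ``absorbing'' the diagonal part of the approximating elements $S_i=(T\type d)^{m_i}R_i$, and none of the three routes you sketch actually does this. The factorization ``$\lim_i (T\type d)^{m_i}R_i = (\lim_i (T\type d)^{m_i})(\lim_i R_i)$ at the level of enveloping semigroups'' is not available: multiplication in the Ellis semigroup is continuous only in the left variable, so from $R_i x\type d\to\bw$ and $(T\type d)^{m_i}\to u$ pointwise you cannot conclude $(T\type d)^{m_i}(R_i x\type d)\to u\bw$ (there is no equicontinuity to transport the convergence $R_ix\type d\to\bw$ through $(T\type d)^{m_i}$). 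Likewise, knowing that the image $\bar u\in E(X)$ of $u$ fixes $x$ tells you only that $ux\type d=x\type d$ and that $u\by$ stays in the fiber over $x$; it does not tell you that the fiber $\QQ\type d(x)$ meets only one $\CF\type d$-minimal set. A priori the fiber could be a union of many distinct closed $\CF\type d$-orbits (each $\K\type d$-type minimal set for the distal $\CF\type d$-action), only one of which contains $x\type d$; ruling this out is precisely the content of the proposition, and it is not ``routine diagram-chasing'' through $\CG\type d=\CF\type d\rtimes\langle T\type d\rangle$. Note also that your argument never uses the face structure of $\QQ\type d$ (that every $r$-dimensional face of a parallelepiped lies in $\QQ\type r$), which is essential: the statement is false for a general closed $\CG\type d$-invariant set containing the diagonal.

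The paper does not prove the proposition directly at all: it deduces it as the case $j=2^d-1$ of the much stronger Proposition~\ref{prop:important}, which is proved by a double induction on $(d,j)$ ordering the coordinates by $\sigma_d$. The inductive step builds a point of $\K\type d(x_\emptyset)$ matching $\bx$ on $E(d,j)$ by projecting along a face to get a $(d-1)$-dimensional configuration satisfying $H(d-1,i)$, invoking the induction hypothesis there, and then \emph{duplicating} the resulting point across two opposite faces via the maps $\Phi$ and $\Psi$; distality and the minimality of $(\QQ\type d,\CG\type d)$ enter through a separate reduction to the normalized case $(*)$ and through an openness argument in the final case $j=2^d-1$. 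If you want to avoid this induction, an Ellis-semigroup proof does exist in the literature (Shao--Ye), but it requires a serious analysis of $\CF\type d$-minimal idempotents, not the soft factorization you propose. As written, your argument establishes only the trivial inclusion.
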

Proposition~\ref{prop:QK} follows from the more general 
Proposition~\ref{prop:important} below. 

In this section (and only in this section), 
we make use of yet another notation for the points of $X\type d$:

\begin{notation}
For $\epsilon \subset [d]$, define
$$
\sigma_d(\epsilon) = \sum_{k=1}^d\epsilon_k2^{k-1} \ .
$$
For $0\leq j < 2^d$, set
$$
E(d,j) = 
\{\epsilon\subset[d]\colon \sigma_d(\epsilon)\leq j\} \ .
$$

For $x\in X$ and $d\geq 1$, let $\K\type d(x)$ denote the closed 
orbit of $x\type d$ under $\CF\type d$.
\end{notation}

We remark that $\K\type d(x)$ is minimal under the action of 
$\CF\type d$. Moreover, if $d\geq 2$ and $\by\in\K\type{d-1}(x)$, then 
$(\by,\by)\in\K\type d(x)$.  As well, $\K\type d(x)$ is invariant under 
digit permutations.  

\begin{proposition}
\label{prop:important}
Assume that $d\geq 1$ is an integer and let $0\leq j< 2^d$.  
Assume that $\bx\in X\type{d}$ satisfies the hypothesis
\begin{itemize}
\item[$H(d,j)$:]
for every $r$ and every face $F$ of dimension $r$ of $\{0,1\}^{d}$
included in $E(d,j)$, 
the projection of $\bx$ along $F$ belongs to $\CQ\type r$. 
\end{itemize}
Then there exists $\bw\in \K\type{d}(x_\emptyset)$ such that 
$w_\epsilon=x_\epsilon$ for every $\epsilon\in E(d,j)$.  
\end{proposition}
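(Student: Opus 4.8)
The plan is to induct on the pair $(d,j)$ ordered lexicographically (or, equivalently, on $d$ with an inner induction on $j$). The base case is $j=0$: then $E(d,0)=\{\emptyset\}$, the hypothesis $H(d,0)$ is vacuous beyond the single vertex, and $x_\emptyset\type d\in\K\type d(x_\emptyset)$ trivially gives the desired $\bw$. So assume $j\geq 1$ and that the result holds for all smaller parameters. Write $\epsilon^*$ for the subset of $[d]$ with $\sigma_d(\epsilon^*)=j$, so that $E(d,j)=E(d,j-1)\cup\{\epsilon^*\}$. By the inductive hypothesis applied to $(d,j-1)$, which is available because $H(d,j)$ clearly implies $H(d,j-1)$, there exists $\bw'\in\K\type d(x_\emptyset)$ agreeing with $\bx$ on all of $E(d,j-1)$. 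It remains to modify $\bw'$ at the single coordinate $\epsilon^*$ (and possibly at coordinates outside $E(d,j)$, which we are free to move) so that it matches $x_{\epsilon^*}$ there, while keeping the agreement on $E(d,j-1)$ and staying inside $\K\type d(x_\emptyset)$.

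To do this I would isolate the largest index $k\in\epsilon^*$, i.e. the highest digit, so that $\epsilon^*=\eta^* \cup\{k\}$ with $\eta^*\subset[k-1]$ and $\sigma_{k-1}(\eta^*)=j-2^{k-1}<j$. The set $E(d,j-1)$ contains all $\epsilon$ with largest index $<k$ together with $\{k\}\cup\delta$ for $\sigma(\delta)<\sigma(\eta^*)$. The key geometric observation is that $\epsilon^*$ together with the part of $E(d,j-1)$ ``below'' it in the $\{k\}$-face, namely $\{\{k\}\cup\delta\colon \sigma_{k-1}(\delta)\le \sigma_{k-1}(\eta^*)\}$, is an initial segment of a $(k-1)$-dimensional subcube; and the corresponding coordinates of $\bx$ form a point of $X\type{k-1}$ satisfying $H(k-1,\sigma_{k-1}(\eta^*))$ by hypothesis $H(d,j)$ (these are faces of $\bx$). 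Hence, applying the inductive hypothesis in dimension $k-1$ to this face of $\bx$, I can complete it to a point of $\K\type{k-1}(x_{\{k\}})$ — but I actually want it as a face of a point of $\K\type d(x_\emptyset)$. The mechanism for transferring is that $\K\type d(x_\emptyset)$ is minimal under $\CF\type d$ (stated in the excerpt), so I can use the face transformations $T_1\type d,\dots,T_{k-1}\type d$, which act only on the coordinates with index $<k$ and fix all others, to realize, as a limit of such transformations applied to $\bw'$, any desired configuration on the $\{k\}$-face that lies in the appropriate $\K$-orbit — using here that the $\{k\}$-face of $\bw'$ has first coordinate $w'_{\{k\}}$ equal to $x_{\{k\}}$ (which sits in $E(d,j-1)$, since $\sigma_d(\{k\})=2^{k-1}\le j-1$ because $\epsilon^*\supsetneq\{k\}$ forces $j>2^{k-1}$), and that $\CF\type{k-1}$ acting on that face is exactly the restriction of the group generated by $T_1\type d,\dots,T_{k-1}\type d$. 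A compactness/subsequence argument then produces $\bw\in\K\type d(x_\emptyset)$ agreeing with $\bw'$ (hence with $\bx$) on $E(d,j-1)$ and with $x_{\epsilon^*}$ at $\epsilon^*$.

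The main obstacle, and the point requiring the most care, is the transfer step: ensuring that when I correct the $\{k\}$-face I do not destroy the already-achieved agreement on $E(d,j-1)$. The coordinates in $E(d,j-1)$ split into those with top index $<k$ (the ``$0$-half'' of the $\{k\}$-face, fixed by $T_1\type d,\dots,T_{k-1}\type d$ only up to the diagonal action — actually these transformations do move them, so I must be more careful) and those of the form $\{k\}\cup\delta$. The resolution is to work not with $\bw'$ directly but to first push $\bw'$ by $\CF\type d$ toward $x_\emptyset\type d$ so that the $0$-half is close to $x_\emptyset\type{k-1}$ on the relevant coordinates, then apply the $(k-1)$-dimensional completion argument inside a small neighborhood, using minimality of $\K\type{k-1}$ to return; a diagonal-limit argument over shrinking $\delta$ finishes it. I expect this bookkeeping — tracking which coordinates are controlled at each stage and invoking minimality of the various $\K\type r$ simultaneously — to be the technical heart of the proof; the inductive skeleton itself is routine once the combinatorics of $E(d,j)$ as a union of faces is set up correctly.
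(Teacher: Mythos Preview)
Your inductive skeleton and the reduction to the case $x_\epsilon=x_\emptyset$ for $\epsilon\in E(d,j-1)$ are correct and match the paper. The gap is in the transfer step, and it is not just bookkeeping. The group $\langle T_1\type d,\dots,T_{k-1}\type d\rangle$ acts on the face $F=\{\{k\}\cup\delta:\delta\subset[k-1]\}$ and on the opposite face $F'=\{\delta:\delta\subset[k-1]\}$ by the \emph{same} copy of $\CF\type{k-1}$: for $S=\prod_i(T_i\type d)^{m_i}$ one has $S_{\{k\}\cup\delta}=S_\delta=T^{\sum_{i\in\delta}m_i}$. Starting from $x_\emptyset\type d$ (after your reduction), the $F$- and $F'$-projections therefore remain equal throughout the orbit. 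So if you drive the $F$-projection toward your completed point $\bv\in\K\type{k-1}(x_\emptyset)$, the $F'$-projection is forced to the same $\bv$; but $F'=2^{[k-1]}$ lies entirely inside $E(d,j-1)$, and there you need all coordinates equal to $x_\emptyset$, not to $v_\delta$ (in particular $v_{\eta^*}=x_{\epsilon^*}\neq x_\emptyset$). Any attempt to ``return'' the $F'$-face using $\CF\type{k-1}$ undoes the work on $F$. Bringing in $T_k\type d$ decouples the faces only by introducing a diagonal on $F$, which changes the base point and does not resolve the coupling. (A smaller issue: when $\epsilon^*=\{k\}$ you have $\{k\}\notin E(d,j-1)$, so your claim $w'_{\{k\}}=x_{\{k\}}$ fails.)

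The paper avoids this coupling entirely by a different choice of $k$ and a duplication trick. After the same reduction, it picks $k$ to be the largest digit with $\eta_k=0$ (a \emph{zero} of $\eta$, not a one), projects $\bx$ along the face $\epsilon_k=0$ to a point $\bu\in X\type{d-1}$, checks that $\bu$ satisfies $H(d-1,i)$ for a suitable $i$, and applies the inductive hypothesis in dimension $d-1$ to get $\bv\in\K\type{d-1}(x_\emptyset)$. The desired $\bw$ is then obtained by \emph{duplicating} $\bv$ across the $k$-direction, using that $(\bv',\bv')\in\K\type d(x_\emptyset)$ for $\bv'\in\K\type{d-1}(x_\emptyset)$ (up to a digit permutation); no face transformations are needed at this stage, so there is no coupling to undo. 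This argument requires $\eta\neq[d]$, and the remaining case $j=2^d-1$ (which your outline does not single out) is handled separately via the openness of the projection $\K\type d(x)\to\QQ\type{d-1}$ onto the upper half, a genuinely different distal-systems argument.
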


First we remark that this proposition 
implies~Proposition~\ref{prop:QK}. Indeed, if $\bx\in\QQ\type d(x)$ 
then $x_\emptyset=x$. Moreover, $\bx$ satisfies the hypothesis 
$H(d,2^d-1)$ and thus agrees with 
a point of $\K\type d(x)$ on $E(d,2^d-1)=[d]$.

\begin{proof}
For $d=1$, the result is obvious since $\K\type 
1(x_\emptyset)=\{x_\emptyset\}\times X$. For $d>1$ and $j=0$, there is 
nothing to prove.

We proceed by induction: take $d>1$ and $j>0$ and assume that 
the result holds for $d-1$ and all values of $j$ and for $d$ and 
$j'<j$.

Assume that $\bx\in  X\type{d}$ satisfies the hypothesis 
$H(d,j)$ and write $x=x_\emptyset$.

\subsubsection{} 
We first make a reduction.  We assume that the result holds under the 
additional hypothesis
\begin{itemize}
\item[(*)] $\bx$ is of the form $x_\epsilon = x_\emptyset$ for 
$\epsilon\in E(d, j-1)$
\end{itemize}
and we show that it holds in the general case.

Assume that $\bx$ satisfies $H(d,j)$.
By the induction hypothesis, there exists $\bv\in \K\type{d}(x)$ 
such that $v_\epsilon=x_\epsilon$ for all $\epsilon\in E(d,j-1)$.  By minimality, the point 
$x\type d$ lies in the closed $\CF\type{d}$-orbit of $\bv$, meaning that 
there exists a sequence $(S_\ell\colon \ell\geq 1)$ in $\CF\type{d}$ such 
that $S_\ell\bv\to x\type d$.  Passing to a subsequence, we can 
assume that $S_\ell\bx\to\bx'$.  We have that $x_\epsilon' = x$ for 
all $\epsilon\in E(d, j-1)$ and $\bx'$ satisfies property $(*)$.  

Property $H(d,j)$ is invariant under the action of $\CF\type{d}$ and 
under passage to limits.  Thus since $\bx'$ lies in the closed 
$\CF\type{d}$-orbit of $\bx$, $\bx'$ satisfies $H(d,j)$.
Using the result of the proposition with the additional assumption of 
$(*)$, we have that there exists $\bv'\in \K\type{d}(x)$ such that 
$v_\epsilon'=x'_\epsilon$ for $\epsilon\in E(d,j)$.  

Since the system is distal and $\bx'$ belongs to the closed 
$\CF\type{d}$-orbit of $\bx$, we also have that $\bx$ belongs to the closed 
$\CF\type{d}$-orbit of $\bx'$.  There exists a sequence $(S_\ell'\colon \ell\geq 
1)$ such that $S_\ell'\bx'\to\bx$.  Passing to a subsequence, we have 
that $S_\ell'\bv'\to\bu$.  Thus $\bu\in \K\type{d}(x)$ and 
$u_\epsilon=x_\epsilon$ for $\epsilon\in E(d,j)$.  

\subsubsection{} 

We now assume $\bx$ satisfies $H(d,j)$ and $(*)$ and assume that $j\neq 2^{d}-1$.  
Again, we write $x = x_\emptyset$.  

Let $\eta\in\{0,1\}^{d}$  be defined by $\sigma_{d}(\eta) = j$.  
By hypothesis, there exists some $k$  with $1\leq k 
\leq d$ such that $\eta_k = 0$.  
Choose $k$ to be the largest $k$ with this property.  

Define the map $\Phi\colon\{0,1\}^{d-1}\to\{0,1\}^{d}$ by 
$$
\Phi(\epsilon) = 
\epsilon_1\ldots\epsilon_{k-1}0\epsilon_{k}\ldots\epsilon_{d-1} \ .
$$

Setting 
$$
\theta = \eta_1\ldots\eta_{k-1}1\ldots 1\in\{0,1\}^{d-1}\ ,
$$
we have that $\Phi(\theta) = \eta$.  

Set $i = \sigma_{d-1}(\theta)$.  
It is easy to check that for $\alpha\in\{0,1\}^{d-1}$, 
\begin{equation}
\label{eq:iff}
\sigma_{d-1}(\alpha) < i \text{ if and only if } 
\sigma_{d}(\Phi(\alpha)) < \sigma_{d}(\Phi(\theta) )= j \ .  
\end{equation}
In particular, $\Phi(E(d-1,i))\subset E(d, j)$.  

Define 
$\bu\in X\type {d-1}$ to be the projection of $\bx$ on $X\type{d-1}$ along 
the face defined by $\epsilon_k = 0$.  In other words, 
$$
u_\epsilon = x_{\Phi(\epsilon)}\ , \epsilon\in\{0,1\}^{d-1}.
$$ 

Moreover, if $F$ is a face of $\{0,1\}^{d-1}$, then $\Phi(F)$ is a face 
of $\{0,1\}^{d}$.  
Since $\bx$ satisfies $H(d,j)$, we have that $\bu$ satisfies 
$H(d-1, i)$.

We have that $u_\emptyset = x$ and by the induction hypothesis, there 
exists $\bv\in \K\type {d-1}(x)$ with $v_\epsilon = u_\epsilon$ for all 
$\epsilon\in E(d-1,i)$.

Define the map $\Psi\colon\{0,1\}^{d}
\to \{0,1\}^{d-1}$ by 
$$
\Psi(\epsilon) = 
\epsilon_1\ldots\epsilon_{k-1}\epsilon_{k+1}\ldots\epsilon_{d} \ .
$$
By definition, $\Psi\circ\Phi$ is the identity 
and $\Psi(\eta) = \theta$. 
On the other hand, $\Phi\circ\Psi(\epsilon) = 
\epsilon_1\ldots\epsilon_{k-1}0\epsilon_{k+1}\ldots\epsilon_d$.  In particular, 
$$
\sigma_d(\Phi\circ\Psi(\epsilon)) \leq \sigma_d(\epsilon) 
\text{ for every } \epsilon\in\{0,1\}^d \ .
$$

Define $\bw\in X\type{d}$ by $w_\epsilon = v_{\Psi(\epsilon)}$ for 
$\epsilon\in\{0,1\}^{d}$.
In other words, $\bw$ is obtained by duplicating $\bv$ on 
two opposite faces.
We check that $\bw\in \K\type{d}(x)$.

To see this, let $\bv'$ be obtained from $\bv$ by the digit permutation 
that exchanges the digits $k-1$ and $d-1$.  Then $\bv'\in\K\type{d-1}(x)$ and 
so $(\bv',\bv')\in \K\type d(x)$.  We obtain $\bw$ from the point 
$(\bv',\bv')$ by the digit permutation that exchanges the digits $k$ and $d$. 

We claim that $\Psi(E(d,j-1))\subset E(d-1,i-1)$.
To show this, we take $\epsilon\in E(d, j-1)$ and 
distinguish two cases.  First assume there exists 
some $m$ with $k+1\leq m \leq d$ with $\epsilon_m=0$.  Then 
one of the $d-k$ last coordinates of 
$\Psi(\epsilon) = 0$ and by definition of $\theta$, 
$\sigma_{d-1}(\Psi(\epsilon)) < \sigma_{d-1}(\theta) = i$.

Now assume that here is no such $m$.  
Because $\sigma_{d}(\epsilon)< \sigma_{d}(\eta)$ and 
$\eta_k=0$, we have that $\epsilon_k = 0$.  
Then $\Phi(\Psi(\epsilon)) = \epsilon $.  
Thus
$$
\sigma_{d}(\Phi(\Psi(\epsilon))) = 
\sigma_{d}(\epsilon)  < j
$$
and applying~\eqref{eq:iff} with $\alpha = \Psi(\epsilon)$, we have that 
$\sigma_{d-1}(\Psi(\epsilon))< i$.  This proves the claim.  

We check that $\bw$ satisfies the conclusion of the proposition.  
First for $w_\eta$, we have that $w_\eta = v_{\Psi(\eta)} = v_{\theta} = 
u_{\theta}$ since $\theta\in E(d-1, i)$, and $u_\theta = x_{\Phi(\theta)} = 
x_\eta$.  Thus $w_\eta = x_\eta$.  
Next, if $\epsilon\in E(d,j-1)$, then $x_\epsilon = x$.  
On the other hand, $w_\epsilon = v_{\Psi(\epsilon)} = u_{\Psi(\epsilon)}$, 
where the last equality holds because by the claim we have
$\Psi(\epsilon) \in 
E(d-1,i-1)$.  But 
$u_{\Psi(\epsilon)} = x_{\Phi\circ\Psi(\epsilon)} = x$, because
$\sigma_d(\Phi\circ\Psi(\epsilon)) \leq \sigma_d(\epsilon)\leq j-1$.  
This $\bw$ is as announced.  

\subsubsection{} 

We are left with considering the case that $j=2^{d}-1$
The hypothesis $H(d, 2^{d}-1)$ means that 
$\bx=(x, x, \ldots, x, y)\in \CQ\type{d}$ and we have to show that this 
lies in $\K\type{d}(x)$. 

We start with a general property. Writing a point $\bx\in X\type{d}$ as $\bx = (\bx',\bx'')$, define 
the projection $\phi\colon \K\type{d}(x)\to\CQ\type {d-1}$
by $\phi(\bx) = \bx''$.
The range of $\phi$ is invariant under the group $\CG\type{d-1}$ and 
thus by  Lemma \ref{lemma:Qminimal}, it is equal to $\CQ\type {d-1}$.
By distality, the map $\phi$ is open. 

Assume $(x, x, \ldots, x,y)\in\CQ\type{d}$.  Write 
$\bv = (x, \ldots, x, y)\in X\type {d-1}$.  
Let $\delta > 0$. 
Since $(x\type{d-1}, x\type{d-1})\in \K\type{d}(x)$, by the openness of $\phi$, there exists 
$\delta'$ with $0< \delta' <\delta$ 
such that if $\bu\in\CQ\type{d-1}$ is $\delta'$-close to $x\type{d-1}$, there exists 
$\bz$ that is $\delta$-close to $x\type{d-1}$ and $(\bz, \bu)\in 
\K\type{d}(x)$.  

Since $(x\type{d-1}, \bv)\in\CQ\type{d}$, there exists $\bu\in 
\CQ\type{d-1}$ and $n\in\Z$ such that $\bu$ is at most distance 
$\delta'$ from
$x\type{d-1}$ and $(T\type {d-1})^n\bu$ is at most distance $\delta$ from 
$\bv$.  Taking 
$\bz$ as above, we have that 
$(\bz, (T\type{d-1})^n\bu)\in \K\type{d}(x)$ and is 
$\delta$-close to $(x\type{d-1}, \bv)$.  

Letting $\delta$ go to $0$, we have that $(x\type{d-1}, \bv)\in 
\K\type{d}(x)$.  
\end{proof}

The next result follows directly from Proposition \ref {prop:important} and
the definition of $\QQ\type d$.
It shows that $\QQ\type d$ verifies properties 
that are generalizations of the
$2-$and $3-$dimensional parallelepiped structures as defined 
in~\cite{HM}.  In particular, $\QQ\type d$ satisfies 
the ``property of closing parallelepipeds''.
This plays a key role in our study of the first condition in Theorem~\ref{theorem:main}.

\begin{proposition}
\label{prop:closing}
Let $(X,T)$ be a minimal distal system and let $d \geq 1$ be an integer.
Assume that $x_\epsilon$, $\epsilon\subset [d]$ with $\epsilon \neq [d]$, 
are points in $X$ such that the face
$(x_\epsilon \colon    j \notin \epsilon)$ 
belongs to $\QQ\type{d-1}$ for each  $j \in [d]$.  Then there exists 
$x_{[d]}\in X$ such that $(x_\epsilon \colon\epsilon \subset [d]) \in\QQ\type d$.  
\end{proposition}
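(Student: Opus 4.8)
The plan is to deduce Proposition~\ref{prop:closing} directly from Proposition~\ref{prop:important}. The hypotheses almost match: we are given points $x_\epsilon$ for $\epsilon \subsetneq [d]$ such that, for each $j \in [d]$, the face obtained by fixing $\epsilon_j = 0$ lies in $\QQ\type{d-1}$. First I would observe that these face conditions, together with Property~\eqref{item:faces} of Section~\ref{subsec:dynparallel} applied inside those $(d-1)$-dimensional parallelepipeds, give that \emph{every} proper face of $(x_\epsilon)$ that avoids the vertex $[d]$ — i.e., every face of every dimension $r < d$ contained in the sub-cube $\{\epsilon : j \notin \epsilon\}$ for some $j$ — lies in $\QQ\type r$. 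Since any face not containing $[d]$ is missing some coordinate, it is contained in one of the $d$ principal sub-faces $\{\epsilon_j = 0\}$, and hence the hypothesis $H(d, 2^d - 2)$ holds for the partially-defined point: indeed $E(d, 2^d - 2) = \{0,1\}^d \setminus \{[d]\}$, and every face of $\{0,1\}^d$ contained in $E(d, 2^d-2)$ omits the vertex $[d]$, hence omits some coordinate direction, hence its projection is one of the faces we control (or a face thereof, handled by transitivity of the face relation).

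Next I would apply Proposition~\ref{prop:important} with $j = 2^d - 2$: taking any $\bx \in X\type d$ whose coordinates $x_\epsilon$ for $\epsilon \in E(d, 2^d-2)$ are the given points (and $x_{[d]}$ arbitrary), the hypothesis $H(d, 2^d-2)$ is satisfied, so there exists $\bw \in \K\type d(x_\emptyset)$ with $w_\epsilon = x_\epsilon$ for all $\epsilon \in E(d, 2^d-2) = \{0,1\}^d \setminus \{[d]\}$. Since $\K\type d(x_\emptyset) \subset \QQ\type d$ (the closed orbit of $x_\emptyset\type d$ under $\CF\type d$ is contained in $\QQ\type d$ by Lemma~\ref{lemma:no-name}), the point $\bw$ is the desired completion: set $x_{[d]} = w_{[d]}$, and then $(x_\epsilon : \epsilon \subset [d]) = \bw \in \QQ\type d$.

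The one point requiring a little care — and what I expect to be the main (minor) obstacle — is verifying cleanly that the given data does satisfy $H(d, 2^d-2)$, namely that every face $F$ of $\{0,1\}^d$ with $F \subset E(d, 2^d-2)$ has its projection of $\bx$ lying in $\QQ\type{r}$ where $r = \dim F$. Such an $F$ does not contain $[d]$, so it is a subset of $\{\epsilon : \epsilon_j = 0\}$ for some $j$; by hypothesis the projection along $\{\epsilon_j = 0\}$ lies in $\QQ\type{d-1}$, and $F$ corresponds to a face of that $(d-1)$-cube, so by Property~\eqref{item:faces} its projection lies in $\QQ\type r$. This is routine but should be spelled out since $F$ may have to be realized as a face of several of the principal sub-cubes consistently — however, since $\QQ\type r$ is an intrinsic object and the projection of $\bx$ along $F$ is a single well-defined point, there is no consistency issue: any one valid sub-cube containing $F$ suffices. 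With $H(d, 2^d-2)$ in hand the rest is immediate from Proposition~\ref{prop:important}.
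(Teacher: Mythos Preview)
Your proposal is correct and is exactly the argument the paper intends: the text states that Proposition~\ref{prop:closing} ``follows directly from Proposition~\ref{prop:important} and the definition of $\QQ\type d$,'' and what you have written is precisely that deduction spelled out in full (applying $H(d,2^d-2)$, noting that any face avoiding the vertex $[d]$ sits inside some $\{\epsilon_j=0\}$ and hence is controlled by Property~\eqref{item:faces}, and using $\K\type d(x_\emptyset)\subset\QQ\type d$).
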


Although we have given the last coordinate in the statement of 
this proposition a particular role, using Euclidean permutations 
the analogous statement holds for any other fixed 
coordinate, provided that the corresponding faces 
lie in $\QQ\type{d-1}$.  

\subsection{Strong form of the regionally proximal relation}

\begin{corollary}
\label{cor:replace}
Let $(X,T)$ be a minimal distal system and let $d \geq 1$ be an integer.
Let $x,y\in X$ and $\bb_*\in X\type {d+1}_*$ with 
$(x,\bb_*)\in\QQ\type{d+1}$.
Then $(y,\bb_*)\in\QQ\type{d+1}$ if and only if 
$(y,x,x,\ldots,x)\in\QQ\type{d+1}$.
\end{corollary}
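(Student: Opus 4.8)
The plan is to prove both implications by exploiting the transitivity of the relation $\sim_{d+1}$ on $\QQ\type{d+1}$ (Proposition~\ref{prop:equiv}) together with the fact that $\QQ\type d$ contains the diagonal. Write $\bb_* = (\bb_*', \bb_*'')$ according to the splitting $X\type{d+1}_* \subset X\type{d+1}$, so that $(x, \bb_*) = ((x, \bb_*'), \bb_*'')$ decomposes as a pair of points in $X\type d$. One direction is almost immediate: if $(y, x, x, \ldots, x) \in \QQ\type{d+1}$, then since $(x, \bb_*) \in \QQ\type{d+1}$ we have, by Property~\eqref{item:faces}, that $(x, \bb_*') \in \QQ\type d$; combining the latter with $(x, x, \ldots, x) = x\type d \in \QQ\type d$ via reflexivity, and using that $(y, x, \ldots, x) \in \QQ\type{d+1}$ expresses $(y, x, \ldots, x) \sim_{d+1}$-relatedness appropriately, I would chain together the relations to land $(y, \bb_*)$ in $\QQ\type{d+1}$.

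The substantive direction is the converse: assume $(y, \bb_*) \in \QQ\type{d+1}$ and deduce $(y, x, x, \ldots, x) \in \QQ\type{d+1}$. Here I would argue as follows. Both $(x, \bb_*)$ and $(y, \bb_*)$ lie in $\QQ\type{d+1}$, and they share all coordinates indexed by $\epsilon \neq \emptyset$; by Corollary~\ref{cor:trans}, this gives $(x, y) \in \RP\type d$. Now I want to upgrade this to the seemingly stronger statement $(y, x, \ldots, x) \in \QQ\type{d+1}$. The idea is to use Proposition~\ref{prop:closing} (the ``property of closing parallelepipeds''): I would build a point of $X\type{d+1}$ whose faces not containing the full-index coordinate all lie in $\QQ\type d$, with the $\emptyset$-coordinate equal to $y$ and the remaining assigned coordinates equal to $x$, and then conclude that it can be completed to a point of $\QQ\type{d+1}$. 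To see that such a configuration of faces is consistent, I would use that $(x, y) \in \RP\type d$ is witnessed (Lemma~\ref{lemma:RPandQQ}) by some $\ba_* \in X\type d_*$ with $(y, \ba_*, x, \ba_*) \in \QQ\type{d+1}$, and then exploit minimality of $(\QQ\type{d+1}, \CG\type{d+1})$ (Lemma~\ref{lemma:Qminimal}) to move $\ba_*$ toward the constant tuple $x, \ldots, x$: applying a sequence $S_i \in \CF\type{d+1}$ driving $\ba_*$ to $x\type{d}_*$, passing to a convergent subsequence, and using distality to ensure the $y$-coordinate and the other $x$-coordinates are preserved in the limit, I obtain $(y, x, x, \ldots, x) \in \QQ\type{d+1}$.

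The main obstacle I anticipate is the limiting argument in the converse direction: when I push $\ba_*$ toward $x\type d_*$ using face transformations, I must make sure the first coordinate stays at $y$ (or returns to $y$) and that the duplicated block structure $(y, \ba_*, x, \ba_*)$ degenerates correctly to $(y, x, \ldots, x)$ rather than to some other constant pattern. This is where distality of $\QQ\type{d+1}$ is essential: it guarantees that $x\type{d+1}$ lies in the closed $\CF\type{d+1}$-orbit of any point, and more importantly that orbit closures are minimal, so once the last $2^d - 1$ coordinates of a point in $\QQ\type{d+1}$ are pinned to a configuration, any point with those coordinates that arises as a limit is genuinely reachable. I would therefore structure the proof so that the closing-parallelepipeds proposition does the combinatorial work of assembling the faces, and distality/minimality does the work of realizing the constant pattern $(x, x, \ldots, x)$ in the non-$\emptyset$ coordinates, with Corollary~\ref{cor:trans} providing the bridge from the hypothesis $(y, \bb_*) \in \QQ\type{d+1}$ to the regional-proximality input.
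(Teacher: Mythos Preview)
Your limiting-argument instinct is right, but you are working much harder than necessary and there is a genuine gap in both directions as written.

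For the direction you call ``almost immediate,'' the chaining via the equivalence relation $\sim_d$ does not go through: from $(y,x,\ldots,x)\in\QQ\type{d+1}$ you get $(y,x,\ldots,x)_{X\type d}\sim_d x\type d$, and from $(x,\bb_*)\in\QQ\type{d+1}$ you get $(x,\bb_*')\sim_d \bb''$, but transitivity does not connect these---you would need $(y,\bb_*')\sim_d(x,\bb_*')$, which is not given. For the converse, you correctly extract $(x,y)\in\RP\type d$ via Corollary~\ref{cor:trans} and a witness $(y,\ba_*,x,\ba_*)\in\QQ\type{d+1}$, but then you invoke minimality of $(\QQ\type{d+1},\CG\type{d+1})$ to ``drive $\ba_*$ to $x\type d_*$.'' That is not the right tool: $\CG\type{d+1}$ moves the $\emptyset$-coordinate too, and a generic element of $\CF\type{d+1}$ will not preserve the $x$ sitting at position $\{d+1\}$. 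What you actually need is Proposition~\ref{prop:QK} applied to the face $(x,\ba_*)\in\QQ\type d$, giving $S'_n\in\CF\type d$ with $S'_n(x,\ba_*)\to x\type d$; then $(S'_n,S'_n)\in\CF\type{d+1}$ does the job on both halves simultaneously.

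Once you see that Proposition~\ref{prop:QK} is the key, the entire detour through $\RP\type d$, Lemma~\ref{lemma:RPandQQ}, and the closing property becomes unnecessary. The paper's argument applies Proposition~\ref{prop:QK} directly to the hypothesis $(x,\bb_*)\in\QQ\type{d+1}(x)=\K\type{d+1}(x)$: by minimality of $\K\type{d+1}(x)$ there is a sequence $S_n\in\CF\type{d+1}$ with $S_n(x,\bb_*)\to x\type{d+1}$. Since every $S\in\CF\type{d+1}$ satisfies $S_\emptyset=\id$ and acts identically on the remaining $2^{d+1}-1$ coordinates regardless of the first, the same sequence gives $S_n(y,\bb_*)\to(y,x,\ldots,x)$. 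Distality then forces the closed $\CF\type{d+1}$-orbits of $(y,\bb_*)$ and $(y,x,\ldots,x)$ to coincide, and since $\QQ\type{d+1}$ is closed and $\CF\type{d+1}$-invariant, one lies in $\QQ\type{d+1}$ if and only if the other does. This handles both implications in one stroke.
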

\begin{proof}
We write $\bu=(x,\bb_*)$, $\bv=(y,\bb_*)$,  and $\by=(y,x,x,\ldots,x)\in X\type{d+1}$.
By Proposition~\ref{prop:important}, we have that $\bu$ belongs to $\K\type{d+1}(x)$ and, by minimality, 
there exists a 
sequence $(S_n\colon n\geq 1)$ in $\CF\type{d+1}$ such that 
$S_n\bu\to x\type{d+1}$. Then $S_n\bv\to \by$ and $\by$ belongs to the closed 
orbit of $\bv$ under $\CF\type{d+1}$.   By distality, this last property implies that
$\bv$ belongs to the closed orbit of $\by$.   Since $\QQ\type{d+1}$ is 
closed and invariant under $\CF\type{d+1}$, we have that 
$\by\in\QQ\type{d+1}$ if and only if $\bv\in\QQ\type{d+1}$.
\end{proof}

\begin{corollary}
\label{cor:RPRPs}
Let $(X,T)$ be a minimal distal system and let $d \geq 1$ be an integer.
Let $x,y\in X$. Then $(x,y)\in\RP\type d$ if and only if 
$(y,x,x,\ldots,x)\in\QQ\type{d+1}=\K\type{d+1}(y)$.
\end{corollary}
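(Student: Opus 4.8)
The statement to prove is Corollary~\ref{cor:RPRPs}: for a minimal distal system $(X,T)$ and $d\geq 1$, we have $(x,y)\in\RP\type d$ if and only if $(y,x,x,\ldots,x)\in\QQ\type{d+1}=\K\type{d+1}(y)$. The plan is to chain together the two immediately preceding results, Lemma~\ref{lemma:RPandQQ} and Corollary~\ref{cor:replace}, and to notice that the equality $\QQ\type{d+1}(y)=\K\type{d+1}(y)$ appearing in the conclusion is exactly Proposition~\ref{prop:QK}. Since a minimal distal system is in particular transitive, Lemma~\ref{lemma:RPandQQ} applies and tells us that $(x,y)\in\RP\type d$ holds precisely when there is some $\ba_*\in X\type d_*$ with $(x,\ba_*,y,\ba_*)\in\QQ\type{d+1}$.

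First I would rewrite the point $(x,\ba_*,y,\ba_*)$ so that Corollary~\ref{cor:replace} can be applied. Writing $\bb_* = (\ba_*,y,\ba_*)\in X\type{d+1}_*$, the condition from Lemma~\ref{lemma:RPandQQ} becomes $(x,\bb_*)\in\QQ\type{d+1}$; here I use that $\QQ\type{d+1}$ is invariant under Euclidean permutations (Property~\eqref{item:Euclidean}) to reorder coordinates if the indexing convention in Lemma~\ref{lemma:RPandQQ} does not literally match the $(x_\emptyset,\bx_*)$-splitting, so that $x$ sits in the $\emptyset$ coordinate. One small point to check is the symmetry of the roles of $x$ and $y$: Lemma~\ref{lemma:RPandQQ} produces a point with $x$ first and $y$ in the $(d+1)$-st coordinate, whereas the conclusion we want has $y$ first; but $\RP\type d$ is symmetric (Corollary~\ref{lemma:stupid}), so $(x,y)\in\RP\type d$ iff $(y,x)\in\RP\type d$ iff there is $\bb_*'$ with $(y,\bb_*')\in\QQ\type{d+1}$, which is the hypothesis shape of Corollary~\ref{cor:replace} with the roles of $x,y$ there suitably assigned.

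Next, granted $(y,\bb_*)\in\QQ\type{d+1}$ for some $\bb_*\in X\type{d+1}_*$, Corollary~\ref{cor:replace} (applied at the point $y$, with $x$ replaced by $y$ in its statement and the free point being $x$) gives that $(x,\bb_*)\in\QQ\type{d+1}$ if and only if $(x,y,y,\ldots,y)\in\QQ\type{d+1}$. Reversing the roles back, this yields: there exists $\bb_*$ with both $x$ and $y$ fitting into a common $(d+1)$-parallelepiped with tail $\bb_*$ precisely when $(y,x,x,\ldots,x)\in\QQ\type{d+1}$. Combining this equivalence with the characterization of $\RP\type d$ from Lemma~\ref{lemma:RPandQQ} gives $(x,y)\in\RP\type d\iff (y,x,x,\ldots,x)\in\QQ\type{d+1}$. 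Finally, Proposition~\ref{prop:QK} identifies $\QQ\type{d+1}(y)$ — the set of elements of $\QQ\type{d+1}$ whose $\emptyset$-coordinate is $y$ — with $\K\type{d+1}(y)$, the closed $\CF\type{d+1}$-orbit of $y\type{d+1}$; since the point $(y,x,x,\ldots,x)$ has $\emptyset$-coordinate equal to $y$, membership in $\QQ\type{d+1}$ is the same as membership in $\K\type{d+1}(y)$, which gives the displayed equality in the statement.

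**Main obstacle.** There is no deep step here — everything is an assembly of Lemma~\ref{lemma:RPandQQ}, Corollary~\ref{cor:replace}, and Proposition~\ref{prop:QK}. The only thing requiring genuine care is bookkeeping the coordinate conventions and the asymmetry in the placement of $x$ versus $y$: Lemma~\ref{lemma:RPandQQ} naturally produces $(x,\ba_*,y,\ba_*)$, Corollary~\ref{cor:replace} is phrased with a distinguished point $x$ and tests $(y,x,\ldots,x)$, and the conclusion wants $(y,x,\ldots,x)$; making sure the symmetry of $\RP\type d$ and the Euclidean-permutation invariance of $\QQ\type{d+1}$ are invoked in the right places (and that no hidden use of minimality or distality beyond what Corollary~\ref{cor:replace} and Proposition~\ref{prop:QK} already assume is needed) is the whole content of writing this proof cleanly.
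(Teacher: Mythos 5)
Your argument is correct and is essentially the paper's proof: both reduce the statement to a combination of Lemma~\ref{lemma:RPandQQ} and Corollary~\ref{cor:replace} (with Proposition~\ref{prop:QK} giving the identification $\QQ\type{d+1}(y)=\K\type{d+1}(y)$), differing only in whether one takes $\bb_*=(\ba_*,x,\ba_*)$ as the paper does or $\bb_*=(\ba_*,y,\ba_*)$ and then invokes the symmetry of $\RP\type d$. The only step worth making explicit is why the base point $(y,\ba_*,y,\ba_*)$ lies in $\QQ\type{d+1}$: the face $(y,\ba_*)$ of $(x,\ba_*,y,\ba_*)$ lies in $\QQ\type d$, and duplication (property~\eqref{item:QdtoQd+1}) then puts $(y,\ba_*,y,\ba_*)$ in $\QQ\type{d+1}$.
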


\begin{proof}
For $\ba^*\in X\type d_*$, apply the preceding corollary 
with $\bb_*=(\ba_*,x,\ba_*)$ and use Lemma~\ref{lemma:RPandQQ}.
\end{proof}

The combination of the previous corollaries allows to prove that each coordinate in a parallelepiped of 
$\CQ\type d$ can be replaced by another point that is regionally proximal of order $d$ with it and
the resulting point is still a parallelepiped. 

We finish with a comment about the regionally proximal relation of order $d$.
In~\cite{Aus}, Corollary 10, chapter 9, 
Auslander  (see also Ellis~\cite{Ellis}) 
proves  that in the definition of the regionally proximal relation, 
the point $x'$ (see Definition~\ref{def:RP} with $d=1$) can be taken to be $x$.
The same result can be stated for the regionally proximal relation of order $d$ in the distal case. 
In fact, a similar argument to the one used to prove Lemma~\ref{lemma:RPandQQ}
allows us to show that: 
$(x,y,\ldots,y)\in\QQ\type{d+1}=\K\type{d+1}(x)$
if and only if  
for any $\delta >0$ there exist $y' \in X$ and a 
vector $\bn=(n_1,\ldots,n_d) \in \Z^d$
such that for any nonempty   
$\epsilon \subset [d]$
$$d(y,y')< \delta \ , \ d(T^{ \bn \cdot \epsilon }x,y)<\delta, \text{ and } d(T^{ \bn \cdot \epsilon}y',y)< \delta$$

\subsection{Summarizing}

\subsubsection{} We show that the second property in Theorem~\ref{theorem:main} implies 
the first one.  Assume that the transitive system $(X,T)$ satisfies 
the second property. By Corollary~\ref{cor:two_distal}, the system is 
distal. 

If $\bx,\by\in\QQ\type{d+1}$ agree on all coordinates other than 
the coordinate indexed 
by $\emptyset$, then $\bx=\by$ by Corollary~\ref{cor:replace}. By 
permutation of coordinates we deduce that the first property of 
Theorem~\ref{theorem:main} is satisfied.

The first two properties of this theorem are thus equivalent. 
From the above discussion, Proposition~\ref{prop:orderRP} follows:
these properties mean that the relation $\RP\type d$ 
is trivial.

\subsubsection{}
\begin{proposition}
\label{prop:quaotient1}
Let $(X,T)$ be a minimal distal system and let $d\geq 1$ be an integer.
Then the relation $\RP\type d$ is a closed invariant equivalence 
relation 
on  $X$.

The quotient of $X$ 
under this equivalence relation is the maximal factor 
of order $d$ of $X$.
\end{proposition}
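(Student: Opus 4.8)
The goal is to show that for a minimal distal system $(X,T)$, the relation $\RP\type d$ is a closed invariant equivalence relation and that $X/\RP\type d$ is the maximal factor of order $d$. First I would establish that $\RP\type d$ is an equivalence relation. Symmetry and $T$-invariance are already recorded in Corollary~\ref{lemma:stupid}, and the relation is closed; reflexivity is immediate since the diagonal lies in $\QQ\type{d+1}$. For transitivity, the idea is to use the ``strong form'' of the regionally proximal relation from Corollary~\ref{cor:RPRPs}: namely $(x,y)\in\RP\type d$ if and only if $(y,x,x,\ldots,x)\in\QQ\type{d+1}=\K\type{d+1}(y)$. Suppose $(x,y)\in\RP\type d$ and $(y,z)\in\RP\type d$. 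Then $(y,x,\ldots,x)\in\K\type{d+1}(y)$ and $(z,y,\ldots,y)\in\K\type{d+1}(z)$. By minimality of $\K\type{d+1}(z)$ under $\CF\type{d+1}$, there is a sequence $(S_n)$ in $\CF\type{d+1}$ with $S_n(z,y,\ldots,y)\to z\type{d+1}$, so $S_n(z,x,\ldots,x)$ converges to some limit in $\K\type{d+1}(z)$; since $(y,x,\ldots,x)\in\K\type{d+1}(y)$ and the action shifts the $\emptyset$-coordinate appropriately, with distality one concludes $(z,x,\ldots,x)\in\K\type{d+1}(z)$, i.e. $(x,z)\in\RP\type d$. (The details mirror the orbit-chasing argument in the proof of Proposition~\ref{prop:equiv}, replacing the decomposition $\bx=(\bx',\bx'')$ by the first-coordinate decomposition $\bx=(x,\bx_*)$.)

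**The quotient as a factor of order $d$.** Next, let $\pi\colon X\to Z:=X/\RP\type d$ be the quotient map; since $\RP\type d$ is closed and $T$-invariant, $Z$ is a compact metric system and $\pi$ is a factor map. By Lemma~\ref{lemma:Qfactors}, $\QQ\type{d+1}(Z)=\pi\type{d+1}(\QQ\type{d+1}(X))$. To check $Z$ has order $d$, suppose $(x',y',\ldots,y')\in\QQ\type{d+1}(Z)$; lift to a point of $\QQ\type{d+1}(X)$ of the form $(a,\bb_*)$ with $\pi(a)=x'$ and $\pi(b_\epsilon)=y'$ for all $\epsilon\neq\emptyset$. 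The lifted point need not have all non-$\emptyset$ coordinates equal, but one can apply Corollary~\ref{cor:replace} repeatedly (using Euclidean permutations to move each coordinate into the distinguished slot) to replace each $b_\epsilon$ by a fixed representative $b$ of the class $y'$ while staying in $\QQ\type{d+1}(X)$; this yields $(a,b,\ldots,b)\in\QQ\type{d+1}(X)$, hence by Corollary~\ref{cor:RPRPs} that $(b,a)\in\RP\type d(X)$, so $\pi(a)=\pi(b)$, i.e. $x'=y'$. Thus $Z$ satisfies the second property of Theorem~\ref{theorem:main} and is a system of order $d$.

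**Maximality.** Finally, I would show $Z$ is the \emph{maximal} such factor: if $p\colon X\to W$ is any factor map onto a system $W$ of order $d$, then $\RP\type d(X)\subseteq \ker p$, so $p$ factors through $\pi$. Indeed, if $(x,y)\in\RP\type d(X)$, then by Corollary~\ref{lemma:stupid} $(p(x),p(y))\in\RP\type d(W)$; but $W$ has order $d$, so $\RP\type d(W)$ is trivial by the discussion preceding Proposition~\ref{prop:quaotient1}, whence $p(x)=p(y)$. By Lemma~\ref{lem:extendcontinu} (or directly, since $\RP\type d(X)$ is closed and $\ker p$ is closed and contains it, the induced map $Z\to W$ is continuous and commutes with $T$), we get a factor map $Z\to W$, establishing maximality.

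**Main obstacle.** The delicate point is the transitivity argument: the orbit-chasing with the groups $\CF\type{d+1}$ and $\K\type{d+1}(\cdot)$ must be set up so that the distality of $X\type{d+1}$ under $\CG\type{d+1}$ (Lemma~\ref{lemma:Qminimal}) genuinely forces the limiting point into $\K\type{d+1}(z)$ rather than merely into the orbit closure of a shifted point — one must be careful that the $\emptyset$-coordinate behaves correctly under the face transformations, which is exactly why the strong form in Corollary~\ref{cor:RPRPs} (with $\QQ\type{d+1}(y)=\K\type{d+1}(y)$) is the right tool. The replacement step for the factor property is routine given Corollary~\ref{cor:replace}, but requires a short induction over the $2^{d+1}-1$ non-trivial coordinates, each handled via an appropriate Euclidean permutation.
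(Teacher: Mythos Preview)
Your argument for the quotient being of order $d$ and for maximality is essentially the paper's argument, and is fine.

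The transitivity argument, however, has a concrete error. You choose $S_n\in\CF\type{d+1}$ with $S_n(z,y,\ldots,y)\to z\type{d+1}$ and then assert that ``$S_n(z,x,\ldots,x)$ converges to some limit in $\K\type{d+1}(z)$''. But you do not yet know that $(z,x,\ldots,x)\in\K\type{d+1}(z)$ --- that is precisely the conclusion you are after --- so there is no reason its $\CF\type{d+1}$-iterates stay in $\K\type{d+1}(z)$. Moreover, knowing $S_{n,\epsilon}y\to z$ for $\epsilon\neq\emptyset$ tells you nothing about $S_{n,\epsilon}x$. The orbit-chasing can be repaired, but you must run it the other way: pick $S_n$ with $S_n(y,x,\ldots,x)\to y\type{d+1}$ (possible since $(y,x,\ldots,x)\in\K\type{d+1}(y)$), so that $S_{n,\epsilon}x\to y$; then $S_n(z,x,\ldots,x)\to(z,y,\ldots,y)$, and distality plus $(z,y,\ldots,y)\in\QQ\type{d+1}$ gives $(z,x,\ldots,x)\in\QQ\type{d+1}$.

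That said, you are working harder than necessary. You already invoke Corollary~\ref{cor:replace} in the second step, and it dispatches transitivity in one line --- this is exactly what the paper does. From $(x,y)\in\RP\type d$ one has $(y,x,\ldots,x)\in\QQ\type{d+1}$; applying Corollary~\ref{cor:replace} with $\bb_*=(x,\ldots,x)$ and the pair $(y,z)$ (noting $(z,y,\ldots,y)\in\QQ\type{d+1}$ since $(y,z)\in\RP\type d$) gives $(z,x,\ldots,x)\in\QQ\type{d+1}$, hence $(x,z)\in\RP\type d$. No orbit-chasing is needed here; the orbit argument is already packaged inside the proof of Corollary~\ref{cor:replace}. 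Also, in the maximality step Lemma~\ref{lem:extendcontinu} is not needed: once $\RP\type d(X)\subset\ker p$ the factorization through the quotient is automatic.
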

The second statement means that this quotient is a system of order $d$ and that 
every system of order $d$ which is a factor of $X$ is a factor of 
this quotient.

\begin{proof}
In order to prove the first statement, we are left with showing that
the relation is transitive. Assume that 
$(x,y)$ and $(y,z)\in\RP\type d$. By Corollary~\ref{cor:RPRPs} 
applied to the pair $(x,y)$,
$(y,x,x,\ldots,x)\in\QQ\type {d+1}$.  By Corollary~\ref{cor:replace} 
applied to the pair $(y,z)$, $(z,x,x,\ldots,x)\in\QQ\type{d+1}$ and by 
Corollary~\ref{cor:RPRPs} again, $(x,z)\in\RP\type d$.

We show now the second part of the proposition.
Let $Y$ be the quotient of $X$ under the equivalence relation 
$\RP\type d$ and let $\phi$ denote the factor map.  Let $(a, 
b)\in\RP\type{d}(Y)$.  Then $(a, b, b, \ldots, 
b)\in\CQ\type{d+1}(Y)$. By Lemma 3.1, there exists $\bx\in 
\CQ\type{d+1}(X)$ satisfying $\phi\type{d+1}(\bx) = (a, b, b, \ldots, 
b)$.  

Write $x_\emptyset = x$ and $x_{00\ldots 01} =y$.  For every 
$\epsilon\neq\emptyset$, $\phi(x_\epsilon) = b = \phi(y)$.  Thus 
$(x_\epsilon, y)\in\RP\type{d}(X)$.  Using Corollary~\ref{cor:RPRPs} 
and Corollary~\ref{cor:replace}, we can 
replace $x_\epsilon$ by $y$ in $\bx$ and obtain an element of 
$\CQ\type{d+1}(X)$.  Doing this for all $\epsilon\neq\emptyset$, we 
have that $(x, y, y,\ldots, y)\in\CQ\type{d+1}(X)$.  
By Corollary~\ref{cor:RPRPs}, this means that $(x,y)\in\RP\type{d}(X)$.  Thus that $\phi(x) 
= \phi(y)$ and so $a=b$.  

Let $W$ be a system of order $d$ and let $\psi\colon X\to W$ be a 
factor map.  Take $Y$ and $\phi$ as above and let $x,y\in X$.  If 
$\phi(x) = \phi(y)$, then $(x,y)\in\RP\type{d}(X)$.  Thus by 
Corollary~\ref{lemma:stupid}, $(\psi(x), \psi(y))\in\RP\type{d}(W)$ and thus $\psi(x) = 
\psi(y)$.
\end{proof}

\subsubsection{}

In order to complete the proofs of Theorems~\ref{theorem:main} 
and~\ref{theorem:maxd}, we are left with showing that the notions of a system of order $d$ and 
an inverse limit of $d$-step minimal nilsystems are equivalent.

In one direction, a result from Appendix B of~\cite{HK3}, translated 
into our current vocabulary, states that a $d$-step minimal nilsystem 
is a system of order $d$.  This property easily passes to inverse 
limits, and so we have:
\begin{proposition}\label{prop:Qnil} 
Let $(X,T)$ be an inverse limit of minimal 
$(d-1)$-step nilsystems and let $d\geq 2$ be an integer.  
Then $(X,T)$ is a system of order $d-1$. 
\end{proposition}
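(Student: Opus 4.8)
The plan is to prove Proposition~\ref{prop:Qnil} by first establishing it for a single minimal nilsystem and then checking that the property of being a system of order $d-1$ is preserved under (sequential) inverse limits. Recall that ``system of order $d-1$'' means that the two equivalent conditions of Theorem~\ref{theorem:main} hold, equivalently (for transitive systems) that $\RP\type{d-1}$ is trivial, or that any two points of $\QQ\type{d}$ sharing $2^d-1$ coordinates coincide. For the nilsystem case, I would invoke the cited result from Appendix~B of~\cite{HK3}: translated into the present language, it asserts exactly that on a $(d-1)$-step minimal nilsystem $X=G/\Gamma$, the set $\QQ\type d(X)$ agrees with the algebraically defined Host--Kra cube group orbit, and the ``unique completion'' property~\eqref{it:change-one} holds — i.e. knowing $2^d-1$ vertices of a parallelepiped determines the last. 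So the content here is essentially a dictionary translation plus the observation that minimal nilsystems are distal, so transitivity and minimality coincide.

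Next I would handle the inverse limit. Suppose $(X,T) = \varprojlim (X_i,T_i)$ with each $X_i$ a minimal $(d-1)$-step nilsystem and factor maps $\pi_i\colon X_{i+1}\to X_i$; let $p_i\colon X\to X_i$ denote the canonical projections, which are factor maps. The key tool is Lemma~\ref{lemma:Qfactors}: $\QQ\type d(X_i)$ is the image of $\QQ\type d(X)$ under $p_i\type d$. I would verify the second condition of Theorem~\ref{theorem:main}: suppose $x,y\in X$ with $(x,y,\dots,y)\in \QQ\type d(X)$. Applying $p_i\type d$ and Lemma~\ref{lemma:Qfactors}, we get $(p_i(x), p_i(y), \dots, p_i(y)) \in \QQ\type d(X_i)$ for every $i$. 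Since each $X_i$ is a system of order $d-1$ (by the nilsystem case), this forces $p_i(x) = p_i(y)$ for all $i$. Because the $p_i$ jointly separate points of the inverse limit, $x = y$. One should also note $X$ is transitive — minimality passes to inverse limits as remarked in the text — so the hypothesis of Theorem~\ref{theorem:main} is met and the second property indeed characterizes systems of order $d-1$.

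The only mild subtlety, and the step I would be most careful about, is making sure the ``translation into our current vocabulary'' of the~\cite{HK3} result is stated at the right level of generality: the Appendix~B result is about the algebraic cube structure $\QQ\type d$ on nilmanifolds coinciding with the dynamically defined closure, and one must cite that this implies precisely property~\eqref{it:change-one} (or~\eqref{it:nil}$\Rightarrow$\eqref{it:change-one} as already flagged after Theorem~\ref{theorem:main}) rather than something weaker. Everything else — that minimal nilsystems are distal, that minimality and distality pass to inverse limits, and the clean functoriality in Lemma~\ref{lemma:Qfactors} — is routine and already recorded in the excerpt, so the proof should be short: one paragraph citing \cite{HK3} for the single nilsystem, and one paragraph doing the separation-of-points argument for the inverse limit.
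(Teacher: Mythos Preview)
Your proposal is correct and matches the paper's approach exactly: the paper simply cites Appendix~B of~\cite{HK3} for the single nilsystem and then remarks that ``this property easily passes to inverse limits,'' which is precisely your separation-of-points argument via Lemma~\ref{lemma:Qfactors} and the projections $p_i$. Your write-up in fact supplies the details the paper omits; there is nothing to add.
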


 We are left with showing the converse, which is:
\begin{theorem}
\label{th:converse}
Assume that $(X,T)$ is a transitive system of order $d-1$.  Then it is 
an inverse limit of $d-1$-step minimal nilsystems.  
\end{theorem}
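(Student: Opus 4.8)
The plan is to show that a transitive (hence, by the preceding results, minimal distal) system $(X,T)$ of order $d-1$ is an inverse limit of $(d-1)$-step minimal nilsystems by constructing, via ergodic-theoretic means, a measure on $X$ that turns $X$ into a system of the type studied in the measure-theoretic structure theorem of~\cite{HK1}, and then transferring that structure back to the topological category. The first step is to recall that since $(X,T)$ is minimal distal, it carries invariant measures; I would pick an ergodic invariant measure $\mu$. The point of distality and minimality is that the topological cube structure $\QQ\type d(X)$ should coincide (up to a null set, and in fact everywhere by distality) with the support of the measure-theoretic cube measure $\mu\type d$ built in~\cite{HK1} on $X\type d$. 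The defining property of a system of order $d-1$ — that two elements of $\QQ\type d$ agreeing on $2^d-1$ coordinates must be equal — is precisely the topological analogue of the statement that the measure-theoretic $\sigma$-algebra $\mathcal{Z}_{d-1}$ is everything, i.e. that $(X,\mu,T)$ equals its own factor $\mathcal{Z}_{d-1}$.

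The key steps, in order, are: (1) fix an ergodic $\mu$ and form the cube measures $\mu\type d$ on $\QQ\type d(X)$, checking (using minimality and distality, via the results of Section~\ref{section:paradistal}, in particular Proposition~\ref{prop:closing} and the fact that $(\QQ\type d,\CG\type d)$ is minimal) that $\mu\type d$ is fully supported on $\QQ\type d(X)$ and that the measure-theoretic and topological notions of "parallelepiped" agree; (2) use the closing-parallelepipeds property together with the order-$(d-1)$ hypothesis to verify that the seminorm $\nnorm{\cdot}_{d}$ associated to $\mu$ is a genuine norm on $L^\infty(\mu)$ modulo constants, equivalently that $\mathcal{Z}_{d-1}(X,\mu,T) = X$; (3) invoke the structure theorem of~\cite{HK1}, which then says $(X,\mu,T)$ is an inverse limit of $(d-1)$-step nilsystems (as measure-preserving systems); (4) upgrade this from the measurable category to the topological category — the measurable factor maps onto the nilsystem factors are, by minimality and distality of all systems involved, continuous (here Lemma~\ref{lem:extendcontinu} and openness of factor maps between minimal distal systems, Proposition~\ref{prop:distal}(5), do the work), so $X$ is a topological inverse limit of $(d-1)$-step minimal nilsystems.

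The main obstacle I expect is step (4) together with the identification in step (1): bridging the measure-theoretic structure and the topological structure. A priori the structure theorem of~\cite{HK1} only gives measurable isomorphisms and measurable factor maps, and one must show these can be taken continuous — this is where the interaction of the two structures is genuinely used, and where distality is essential (the introduction explicitly flags that "it is only via measure theoretic methods that we are finally able to obtain the general topological results"). Concretely, one has to show that the maximal $k$-step nilfactor in the measurable sense is realized by a continuous factor map onto a minimal nilsystem, for each $k < d-1$, and that these cohere into an inverse system; the tool for promoting a measurable equivariant map defined a.e. to a continuous everywhere-defined one on a minimal distal system is exactly Lemma~\ref{lem:extendcontinu}, applied after one first produces the map on a suitable invariant (conull, hence dense) subset. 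A secondary technical point is verifying that $\mu\type d$ has full topological support on $\QQ\type d(X)$ — without this, the order-$(d-1)$ hypothesis (a statement about all points of $\QQ\type d$) cannot be converted into the measure-theoretic statement $\mathcal{Z}_{d-1}=X$ — and this is precisely where one needs that $(\QQ\type d,\CG\type d)$ is minimal (Lemma~\ref{lemma:Qminimal}) so that the support, being closed and invariant, is all of $\QQ\type d$.
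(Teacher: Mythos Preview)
Your high-level strategy matches the paper's: pick an ergodic invariant measure $\mu$, note that $\mu\type d$ is concentrated on $\QQ\type d$, use the order-$(d-1)$ hypothesis to verify condition~(3) of Theorem~\ref{fact:structure} (the existence of the map $J$ determining $x_\emptyset$ from the other coordinates holds everywhere on $\QQ\type d$, hence $\mu\type d$-a.e.), conclude that $(X,\mu,T)$ is measurably isomorphic to an inverse limit $(Y,\nu,T)$ of $(d-1)$-step nilsystems, and then upgrade the measurable isomorphism to a topological one.

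The gap is in your step~(4). You correctly identify that Lemma~\ref{lem:extendcontinu} is the tool that promotes a continuous equivariant map on a dense invariant subset to a global continuous map, but you do not explain how to obtain continuity on \emph{any} invariant subset in the first place. A measurable equivariant map $\Psi\colon Y\to X$ is not automatically continuous on a conull set; Lusin's theorem gives continuity on a large compact set, not an invariant one, and neither distality nor openness of factor maps (Proposition~\ref{prop:distal}(5)) bridges this. This is precisely the difficulty the paper devotes Section~\ref{section:preergodic} and Section~\ref{sec:final} to resolving, and the mechanism is entirely absent from your outline: the paper uses the theory of \emph{dual functions} $\CD_d$ and introduces property~$\prop$ (uniform convergence of the cube averages). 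The key facts are that inverse limits of nilsystems have property~$\prop$ (Proposition~\ref{fact:uniform}), that under~$\prop$ dual functions of $L^{2^d-1}$ functions are genuinely continuous (Lemma~\ref{fact:continuousDd}), and that on a system of order~$d-1$ the dual function $\CD_d\one_{B(x,\delta)}$ vanishes outside $B(x,\eta)$ (Lemma~\ref{fact:Ddzero}). These together show that for every open $U\subset X$, the set $\Psi^{-1}(U)$ agrees with an open set up to a null set (Claim~\ref{fact:openopen}), which is what yields continuity of $\Psi$ on a conull invariant set. Only then does Lemma~\ref{lem:extendcontinu} apply.

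A minor point: you do not actually need full support of $\mu\type d$ on $\QQ\type d$ for step~(2); it suffices that $\mu\type d$ is concentrated on $\QQ\type d$ (Lemma~\ref{fact:suppotymud}), since the map $J$ is defined and continuous everywhere on $\QQ\type d$ by the order-$(d-1)$ hypothesis, so condition~(3) of Theorem~\ref{fact:structure} follows immediately.
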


We recall that the hypothesis of this theorem means that if $\bx,\by\in\QQ\type {d}$ have $2^{d}-1$ 
coordinates in common, then $\bx=\by$.   In particular, this implies 
that the system is distal and minimal.

The proof of this theorem is carried out in the next two sections.

\section{Ergodic preliminaries}
\label{section:preergodic}

The result  of Theorem~\ref{th:converse}
is established in the next section using invariant 
measures on $X$. In this section, we summarize the background 
material and give some preliminary results.

\subsection{Inverse limits of nilsystems}
\label{subsec:invelimit}

A {\em measure preserving system} is defined to be a 
quadruple $(X, \CB, \mu, T)$, where $(X, \CB, \mu)$ is a 
probability space and $T\colon X\to X$ is a measure 
preserving transformation.  In general, we omit the 
$\sigma$-algebra $\CB$ from the notation and 
write $(X,\mu, T)$.

Throughout, we make use both of the vocabulary of topological 
dynamics and of ergodic theory, leading to possible 
confusion. In general, it is clear from the context whether we are 
referring to a measure preserving system or a topological 
system, and so we just refer to either as a {\em system}.
Topological factor maps were already defined. We recall that an
\emph{ergodic theoretic factor map} between the measure preserving 
systems $(X,  \mu, T)$ and $(X',  \mu', T)$ is a measurable map 
$\pi\colon X\to X'$ (defined almost everywhere), mapping the measure $\mu$ to $\mu'$ and 
commuting with the transformations (almost everywhere).   If the map $\pi$ is 
invertible (almost everywhere), we say that the two systems are {\em isomorphic}.  

Inverse limits of nilsystems in the topological sense were discussed
in Section~\ref{sec:nilsystems}.  We make this notion precise 
in the measure theoretic sense, in this case also we consider only sequential inverse limits.
A $d$-step nilsystem $(X,T)$, endowed with its Haar 
measure $\mu$, is ergodic if and only if $(X,T)$ is a minimal 
topological system; in this case, $\mu$ is its unique 
invariant measure. 
Therefore, every inverse limit 
(in the topological sense) of $d$-step 
minimal nilsystems is uniquely ergodic. 

Now, let $(X,\mu,T)=\varprojlim (X_j,\mu_j,T_j)$ be an inverse limit in 
the ergodic theoretic sense of a sequence of $d$-step ergodic 
nilsystems. Recall that each nilsystem $(X_j,T)$ is endowed with its Borel $\sigma-$algebra
and $\mu_j$ its Haar measure. 
This means that for every $j\in\N$, there exist 
ergodic theoretic factor maps 
$\pi_j\colon (X_{j+1},\mu_{j+1},T)\to(X_j,\mu_j,T)$ and $p_j\colon 
(X,\mu,T)\to(X_j,\mu_j,T)$ satisfying $\pi_j\circ p_{j+1}=p_j$ for 
every $j$ such that the Borel $\sigma$-algebra $\CB$ of $X$ is spanned by 
the $\sigma$-algebras $p_j\inv(\CB_j)$, where $\CB_j$ denotes the 
Borel $\sigma$-algebra of $X_j$. 

Every ergodic theoretic factor map between ergodic nilsystems 
is equal almost everywhere to a topological factor map. A short proof of this fact 
is given in the Appendix (Theorem~\ref{fact:factonil}). 
Therefore, the factor maps $\pi_j$ in the definition of an 
inverse limit (in the ergodic sense) can be 
assumed to be topological factor maps.  
It follows that $(X,\mu,T)$ can 
be identified with the topological inverse limit. 

This allows us, in the sequel, to not distinguish between the notions of 
topological and ergodic theoretic inverse limits of $d$-step 
ergodic nilsystems. 

\subsection{Ergodic uniformity seminorms and nilsystems}
\label{subsec:mud}
Let  $(X,\mu,T)$ be  an ergodic system.
For points in $X\type d$ and transformations of these spaces we use the same notation 
as in the topological setting. 
In Section 3 of~\cite{HK1}, a measure $\mu\type d$ on $X\type d$
and a seminorm $\nnorm\cdot_d$ on $L^\infty(\mu)$ are constructed.

We recall the properties of these objects:
\begin{proposition}
Assume $(X, \mu, T)$ is an ergodic system and that $d\geq 1$ is an 
integer.  
\begin{enumerate}
\item
The measure $\mu\type d$ 
is invariant and ergodic  under the action of the group $\CG \type d$.
\item
Each one dimensional marginal of 
$\mu\type d$ is equal to $\mu$ and each of its two dimensional 
marginals (meaning the image under the map 
$\bx\mapsto(x_\epsilon,x_\theta)$ for $\epsilon\neq\theta\subset[d]$)
is equal to $\mu\times\mu$.
\item
If $p\colon(X,\mu,T)\to(Y,\nu,T)$ is a factor map then, $\nu\type d$ 
is the image of $\mu\type d$ under the map $p\type d\colon X\type 
d\to Y\type d$.
\end{enumerate}
\end{proposition}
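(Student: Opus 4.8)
The plan is to treat this proposition as a recollection of the construction of~\cite[Section~3]{HK1} and to prove all three assertions by a single induction on $d$. I would first write down the inductive definition of the measures: $\mu\type 0=\mu$ and, identifying $X\type{d+1}$ with $X\type d\times X\type d$ and writing a point as $(\bx',\bx'')$, the measure $\mu\type{d+1}$ is the relatively independent self-joining of $\mu\type d$ over the $\sigma$-algebra $\CI\type d$ of $T\type d$-invariant sets of $(X\type d,\mu\type d)$, i.e.
$$
\int F(\bx')\,G(\bx'')\,d\mu\type{d+1}=\int\E\bigl(F\mid\CI\type d\bigr)\,\E\bigl(G\mid\CI\type d\bigr)\,d\mu\type d
$$
for $F,G\in L^\infty(\mu\type d)$. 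The base case $d=0$ is then immediate: $\CG\type 0=\{T^n\}$, so (1) is the ergodicity hypothesis, (2) is just $\mu\type 0=\mu$, and (3) is $p_*\mu=\nu$. For the induction step I would first dispatch the invariance half of (1): $\CG\type{d+1}$ is generated by the ``doubled'' group $\{(S,S)\colon S\in\CG\type d\}$ (coming from $T\type{d+1}=(T\type d,T\type d)$ and the face maps $T_j\type{d+1}=(T_j\type d,T_j\type d)$ for $j\le d$) together with $T_{d+1}\type{d+1}=\id\times T\type d$; the generators acting through $T\type d$ fix $\mu\type{d+1}$ because $\E(\cdot\mid\CI\type d)$ is $T\type d$-invariant, and each $(S,S)$ fixes it because $S$ preserves $\mu\type d$ (inductive hypothesis) and commutes with $T\type d$, hence preserves $\CI\type d$ and satisfies $\E(F\circ S\mid\CI\type d)=\E(F\mid\CI\type d)\circ S$. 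This part is a routine unwinding of the joining formula.

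The step I expect to be the main obstacle is the ergodicity in (1). I would take $F\in L^2(\mu\type{d+1})$ invariant under $\CG\type{d+1}$, so in particular under $\id\times T\type d$. Disintegrating $\mu\type{d+1}$ over the first coordinate $\bx'$, the fibre measure on $\bx''$ is the conditional measure of $\mu\type d$ over its $T\type d$-invariant $\sigma$-algebra $\CI\type d$, which is therefore a $T\type d$-ergodic measure; hence for a.e. $\bx'$ the function $\bx''\mapsto F(\bx',\bx'')$ is a.e. constant, so $F(\bx',\bx'')=\tilde F(\bx')$ for some $\tilde F\in L^2(\mu\type d)$. Invariance of $F$ under every $(S,S)$ with $S\in\CG\type d$ then forces $\tilde F\circ S=\tilde F$, and by the inductive hypothesis $(X\type d,\mu\type d,\CG\type d)$ is ergodic, so $\tilde F$, and hence $F$, is constant. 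The delicate points here — that the disintegration of a relatively independent self-joining over a factor has its fibre measures equal to the conditional measures, and that those conditionals are exactly the ergodic components of $T\type d$ — are standard but deserve a sentence each.

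For (2) I would argue that the two one-dimensional marginals of a relatively independent joining are the two factors, so every one-dimensional marginal of $\mu\type{d+1}$ is a one-dimensional marginal of $\mu\type d$, hence $\mu$ by induction. Given a pair $\epsilon\ne\theta$ in $\{0,1\}^{d+1}$, I would choose a coordinate $j$ lying in exactly one of them, say $j\in\epsilon\setminus\theta$: then $T_j\type{d+1}$ preserves $\mu\type{d+1}$ and acts on the $(\epsilon,\theta)$-marginal $\lambda$ of $\mu\type{d+1}$ as $T\times\id$, so $\int(f\circ T-f)(a)\,g(b)\,d\lambda=0$ for all $f,g$. Since $(X,\mu,T)$ is ergodic the coboundaries $\{f\circ T-f\colon f\in L^2(\mu)\}$ are dense in the mean-zero subspace of $L^2(\mu)$, and combined with the fact that both marginals of $\lambda$ equal $\mu$ this yields $\lambda=\mu\times\mu$.

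Finally, for (3) I would proceed by induction, testing the claimed identity $p\type{d+1}_*\mu\type{d+1}=\nu\type{d+1}$ on product functions $F(\bx')G(\bx'')$ with $F,G\in L^\infty(\nu\type d)$; this reduces to the identity $\E^{\mu\type d}\bigl(F\circ p\type d\mid\CI\type d(X)\bigr)=\E^{\nu\type d}\bigl(F\mid\CI\type d(Y)\bigr)\circ p\type d$, which follows from the pointwise ergodic theorem applied to the $T\type d$-action on $(X\type d,\mu\type d)$ together with the inductive identity $p\type d_*\mu\type d=\nu\type d$; pushing the joining formula forward through $p\type{d+1}=p\type d\times p\type d$ then finishes. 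The seminorm $\nnorm{\cdot}_d$ plays no role in this proposition and I would treat its elementary properties separately. To summarize: everything except the ergodicity in (1) reduces mechanically to the inductive hypothesis via the joining formula, and the ergodicity is where the real content lies.
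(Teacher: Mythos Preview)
The paper does not give a proof of this proposition: it is stated as a summary of the construction and results of \cite[Section~3]{HK1}, with no argument supplied in the present paper. Your proposal is therefore not competing with a proof in the paper but rather reconstructing the omitted one, and what you sketch is correct and is essentially the argument of \cite{HK1}: the inductive definition of $\mu\type{d+1}$ as the relatively independent self-joining of $\mu\type d$ over the $T\type d$-invariant $\sigma$-algebra, invariance checked on generators, ergodicity via the ergodic decomposition of the fibre measures, marginals via a face transformation acting as $T\times\id$, and functoriality via the commutation of conditional expectation onto the invariant $\sigma$-algebra with the factor map. Your identification of the ergodicity step as the only substantive one is accurate.
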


For every $f\in L^\infty(\mu)$, the 
\emph{$d$-th seminorm $\nnorm f_d$ of $f$} is defined by
\begin{equation}
\label{eq:seminorm}
 \nnorm 
f_d^{2^d}=\int\prod_{\epsilon\subset[d]}f(x_\epsilon)\,d\mu\type 
d(\bx)\ .
\end{equation}
We have that:
\begin{lemma}
\label{fact:seminorm}
Assume that $(X, \mu, T)$ is an ergodic system and let $d\geq 1$ 
be an integer.
\begin{enumerate}
\item 
\label{it:seminormintegral}
For every $f\in L^\infty(\mu)$, 
$\Bigl|\int f\,d\mu\Bigr|\leq\nnorm f_d$.
\item
If $p\colon(X,\mu,T)\to(Y,\nu,T)$ is a factor map, then 
$\nnorm f_d=\nnorm{f\circ p}_d$ for every
function $f\in L^\infty(\nu)$. 
\end{enumerate}
\end{lemma}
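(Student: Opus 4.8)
The plan is to split the statement into its two parts: the second is an immediate consequence of the preceding Proposition, and for the first I would induct on $d$, exploiting the recursive construction of the measures $\mu\type d$ from~\cite{HK1}.

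For~\eqref{it:seminormintegral}, I would first recall the two facts from Section~3 of~\cite{HK1} that drive the induction: $\mu\type 1=\mu\times\mu$ (by ergodicity of $(X,\mu,T)$), and, for $d\geq1$, the measure $\mu\type{d+1}$ on $X\type{d+1}=X\type d\times X\type d$ is the relatively independent self-joining of $\mu\type d$ with itself over the $\sigma$-algebra $\CI\type d$ of sets invariant under the diagonal transformation $T\type d$ of $X\type d$. The base case is a one-line computation, $\nnorm f_1^2=\int f(x_0)\,\overline{f(x_1)}\,d(\mu\times\mu)=\bigl|\int f\,d\mu\bigr|^2$ (here, as in~\cite{HK1}, the factors over vertices of odd weight are read with a complex conjugate, a convention suppressed in~\eqref{eq:seminorm}). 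For the inductive step, writing a point of $X\type{d+1}$ as $(\bx',\bx'')$ with $\bx',\bx''\in X\type d$ and putting $g(\bx)=\prod_{\eta\subset[d]}f(x_\eta)$, the description of $\mu\type{d+1}$ gives
$$\nnorm f_{d+1}^{2^{d+1}}=\int\bigl|\E\bigl(g\mid\CI\type d\bigr)\bigr|^2\,d\mu\type d\ \geq\ \Bigl|\int g\,d\mu\type d\Bigr|^2=\nnorm f_d^{2^{d+1}},$$
the right-hand representation showing in passing that $\nnorm f_{d+1}^{2^{d+1}}\geq0$ (part of the induction), and the inequality being Cauchy--Schwarz for the probability measure $\mu\type d$ together with $\int\E(g\mid\CI\type d)\,d\mu\type d=\int g\,d\mu\type d$. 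Hence $\nnorm f_{d+1}\geq\nnorm f_d$ for every $d\geq1$, and chaining these inequalities down to the base case gives $\nnorm f_d\geq\nnorm f_1=\bigl|\int f\,d\mu\bigr|$, which is~\eqref{it:seminormintegral}.

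For the second statement, given a factor map $p\colon(X,\mu,T)\to(Y,\nu,T)$ and $f\in L^\infty(\nu)$, I would observe that $\prod_{\epsilon\subset[d]}(f\circ p)(x_\epsilon)=\bigl(\prod_{\epsilon\subset[d]}f(y_\epsilon)\bigr)\circ p\type d$ and that, by the third item of the preceding Proposition, $\nu\type d$ is the image of $\mu\type d$ under $p\type d$; the change of variables formula then gives
$$\nnorm{f\circ p}_d^{2^d}=\int\prod_{\epsilon\subset[d]}(f\circ p)(x_\epsilon)\,d\mu\type d(\bx)=\int\prod_{\epsilon\subset[d]}f(y_\epsilon)\,d\nu\type d(\by)=\nnorm f_d^{2^d}.$$

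The only step requiring care is the inductive step above: one must unwind the relatively independent joining defining $\mu\type{d+1}$ so that the integral genuinely factors through the single conditional expectation $\E(g\mid\CI\type d)$, with the conjugate correctly placed on the vertices having $\epsilon_{d+1}=1$; once that bookkeeping is done, the Cauchy--Schwarz step is routine. As both assertions are already recorded in~\cite{HK1}, the proof is essentially a matter of recalling that argument.
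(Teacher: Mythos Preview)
Your argument is correct and is precisely the standard proof from~\cite{HK1}. Note that the present paper does not actually prove this lemma: it is stated without proof as part of the summary of results imported from~\cite{HK1}, so there is no paper-internal argument to compare against; your reconstruction matches the original source.
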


We summarize some of the main results of~\cite{HK1}:
\begin{theorem}
\label{fact:structure}
Assume that $(X,\mu,T)$ is an ergodic system 
and that $d\geq 1$ is an integer.  The following properties are 
equivalent:
\begin{enumerate}
\item
$(X,\mu,T)$ is measure theoretically isomorphic to an inverse limit 
of $(d-1)$-step ergodic  nilsystems.
\item
$\nnorm\cdot_d$ is a norm 
on $L^\infty(\mu)$ (equivalently $\nnorm f_d=0$ implies that $f=0$).
\item
There exists a measurable map $J\colon X\type d_*\to X$ such that 
$x_\emptyset=J(x_\epsilon\colon \emptyset\neq\epsilon\subset[d])$ for 
$\mu\type d$-almost every $\bx\in X\type d$.
\end{enumerate}
\end{theorem}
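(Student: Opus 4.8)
This theorem collects several of the principal results of~\cite{HK1}, and the plan is to prove the cycle of implications $(1)\Rightarrow(3)\Rightarrow(2)\Rightarrow(1)$, the last one being the deep structure theorem. For $(1)\Rightarrow(3)$ I would first treat a single $(d-1)$-step ergodic nilsystem $X=G/\Gamma$: by~\cite{HK3} it is a system of order $d-1$, so by property~(1) of Theorem~\ref{theorem:main} the coordinate $x_\emptyset$ of any point of $\QQ\type d(X)$ is determined by the remaining $2^d-1$ coordinates, and by compactness this determination is a continuous map; since $\mu\type d$ is the Haar measure of the nilmanifold $\QQ\type d(X)$, restricting that map gives $J$ on a single nilsystem. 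For an inverse limit $X=\varprojlim X_j$ one has $\mu\type d=\varprojlim\mu_j\type d$, the map $p_j\type d$ pushes $\mu\type d$ onto $\mu_j\type d$, and assembling the maps $J_j$ coordinatewise --- after checking compatibility with the bonding maps $\pi_j$ --- yields a measurable $J$ on $X$.

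For $(3)\Rightarrow(2)$: if $x_\emptyset=J(x_\epsilon\colon\emptyset\neq\epsilon\subset[d])$ for $\mu\type d$-a.e.\ $\bx$, then for every $h\in L^\infty(\mu)$ the function $h(x_\emptyset)$ is $\mu\type d$-a.e.\ a function of the ``side'' coordinates $(x_\epsilon\colon\epsilon\neq\emptyset)$, hence measurable with respect to the sub-$\sigma$-algebra they generate. By the description in~\cite{HK1} of the factor ${\mathcal Z}_{d-1}$ --- a bounded function $g$ on $X$ is ${\mathcal Z}_{d-1}$-measurable precisely when $g(x_\emptyset)$ is, $\mu\type d$-a.e., measurable with respect to the $\sigma$-algebra generated by the coordinates $x_\epsilon$, $\epsilon\neq\emptyset$ --- this forces every $h\in L^\infty(\mu)$ to be ${\mathcal Z}_{d-1}$-measurable, i.e.\ ${\mathcal Z}_{d-1}=X$; since $\nnorm f_d=0$ is equivalent to $\E(f\mid{\mathcal Z}_{d-1})=0$, the seminorm $\nnorm\cdot_d$ is then a norm on $L^\infty(\mu)$. (Alternatively one can prove $(1)\Rightarrow(2)$ directly: a minimal $(d-1)$-step nilsystem is its own ${\mathcal Z}_{d-1}$, and this passes to inverse limits using the behaviour of $\nnorm\cdot_d$ under factors together with a martingale-convergence argument.)

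Finally, $(2)\Rightarrow(1)$ is the hard step, and is essentially the main structure theorem of~\cite{HK1}: the factor ${\mathcal Z}_{d-1}$ of an arbitrary ergodic system is always an inverse limit of $(d-1)$-step ergodic nilsystems, and hypothesis~(2) says precisely that ${\mathcal Z}_{d-1}=X$. I would reproduce the inductive tower of~\cite{HK1}: ${\mathcal Z}_{d-1}$ is realized as a compact abelian group extension of ${\mathcal Z}_{d-2}$ by a cocycle ``of type $d-1$'', one analyses the associated Mackey group and the structure of $\QQ\type d$ over this extension, and one then invokes the identification of such extensions (up to inverse limits) with nilsystems. This is by far the most delicate point; every implication above it is essentially bookkeeping once the nilpotent-group computations of~\cite{HK3} and the defining property of ${\mathcal Z}_{d-1}$ are in hand.
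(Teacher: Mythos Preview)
The paper does not prove this theorem: it is quoted as a summary of the main results of~\cite{HK1}, with no argument supplied in the present article. Your cycle $(1)\Rightarrow(3)\Rightarrow(2)\Rightarrow(1)$ is a faithful outline of how the equivalences are assembled from the machinery of~\cite{HK1}, and you correctly identify $(2)\Rightarrow(1)$ --- the structure theorem for the factor $\mathcal Z_{d-1}$ --- as the deep step. One remark on your route for $(1)\Rightarrow(3)$: invoking the topological fact from~\cite{HK3} (recorded here as Proposition~\ref{prop:Qnil}) together with the concentration of $\mu\type d$ on $\QQ\type d$ (Lemma~\ref{fact:suppotymud}) is logically sound and not circular within this paper, since both inputs are established independently of Theorem~\ref{fact:structure}; but it is not the original route of~\cite{HK1}, which proceeds purely measure-theoretically by showing that an inverse limit of $(d-1)$-step nilsystems coincides with its own factor $\mathcal Z_{d-1}$, after which the map $J$ is supplied by the characterization of that factor. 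Either route is acceptable.
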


Using these properties, it follows that any measure theoretic factor of 
an inverse limit 
of $(d-1)$-step nilsystems is isomorphic in the ergodic theoretic 
sense to an inverse limit 
of $(d-1)$-step nilsystems.

\begin{theorem}[\cite{HK1}, Theorem 1.2]
\label{fact:cubiclimit}
Assume that $(X,\mu, T)$ is an ergodic system, $d\geq 1$ is an integer, 
and $f_\epsilon\in L^\infty(\mu)$ for $\emptyset\neq\epsilon\subset[d]$.
The averages
\begin{equation}
\label{eq:cubiclimit}
\frac 1{N^d}\sum_{0\leq n_1,\dots,n_d<N}
\prod_{\substack{\epsilon\subset[d]\\ \epsilon\neq\emptyset}}
f_\epsilon(T^{\bn\cdot\epsilon }x)
\end{equation}
converge in $L^2(\mu)$ as $N\to+\infty$.

Letting $F$ denote the limit  of 
these averages, we have that for every $g\in L^\infty(\mu)$,
\begin{equation}
\label{eq:limcubiclimit}
\int g(x)F(x)\,d\mu(x)=\int g(x_\emptyset)
\prod_{\substack{\epsilon\subset[d]\\ \epsilon\neq\emptyset}}
f_\epsilon(x_\epsilon)\,d\mu\type d(\bx)\ .
\end{equation}
\end{theorem}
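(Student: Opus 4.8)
The plan is to prove this result (Theorem~1.2 of~\cite{HK1}) by combining a \emph{generalized von Neumann inequality}, obtained from repeated applications of the van der Corput lemma, with the structure theorem for ergodic systems (Theorem~\ref{fact:structure}), thereby reducing the whole statement to the case of a minimal $(d-1)$-step nilsystem, where it follows from equidistribution. The first step is to establish that for every nonempty $\epsilon_0\subset[d]$,
$$\limsup_{N\to\infty}\Bigl\|\frac1{N^d}\sum_{0\le n_1,\ldots,n_d<N}\ \prod_{\emptyset\neq\epsilon\subset[d]}f_\epsilon\bigl(T^{\bn\cdot\epsilon}\,\cdot\,\bigr)\Bigr\|_{L^2(\mu)}\ \le\ \nnorm{f_{\epsilon_0}}_d\prod_{\emptyset\neq\epsilon\neq\epsilon_0}\|f_\epsilon\|_{L^\infty(\mu)}.$$
Applying the van der Corput inequality once in each of the variables $n_1,\ldots,n_d$, and using the Euclidean symmetries of the cube $\{0,1\}^d$ (which permute the functions $f_\epsilon$ transitively among themselves and, as observed in Section~\ref{subsec:dynparallel}, only amount to a harmless change of variables in the average), the square of the left-hand side is bounded above by an iterated average which is precisely $\nnorm{f_{\epsilon_0}}_d^{2^d}$ times the product of the remaining sup-norms; this is the same computation that produces the seminorm in~\eqref{eq:seminorm}.

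Next I would use this bound to reduce to a nilsystem. Let $\mathcal Z_{d-1}$ be the factor of~\cite{HK1} characterized by $\nnorm f_d=0\iff\E(f\mid\mathcal Z_{d-1})=0$. Replacing each $f_\epsilon$ by $\E(f_\epsilon\mid\mathcal Z_{d-1})$ one function at a time (telescoping), each error term is an average of the above type in which one argument has zero $d$-th seminorm, so by the first step it tends to $0$ in $L^2(\mu)$; hence it suffices to prove the statement when every $f_\epsilon$ is lifted from $\mathcal Z_{d-1}$, and then, since $T$ descends to this factor, when $(X,\mu,T)$ itself equals $\mathcal Z_{d-1}$. By Theorem~\ref{fact:structure}, $(X,\mu,T)$ is then isomorphic to an inverse limit of $(d-1)$-step ergodic nilsystems, so its $L^2$ is exhausted by functions lifted from a single $(d-1)$-step nilfactor; since $\nnorm h_d\le\|h\|_{L^\infty(\mu)}^{1-2^{-d}}\|h\|_{L^1(\mu)}^{2^{-d}}$, the first step again controls the resulting $L^2$-approximation errors. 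Thus it is enough to treat the case in which $(X,\mu,T)$ is a minimal $(d-1)$-step nilsystem and each $f_\epsilon$ is continuous.

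In that case I would argue by equidistribution. For $x\in X$ the coordinates of $S\,x\type d$, $S\in\CF\type d$, are of the form $T^{\bn\cdot\epsilon}x$, so evaluated at $x$ the average in~\eqref{eq:cubiclimit} is a Cesàro average, over the Følner sets $[0,N)^d$ of the $\Z^d$-action generated by the face transformations $T_1\type d,\ldots,T_d\type d$, of the continuous function $\bigotimes_{\epsilon\neq\emptyset}f_\epsilon$ on the closed orbit $\QQ\type d(x)$. By the results of~\cite{HK3} reviewed in Proposition~\ref{prop:Qnil}, for a minimal nilsystem $\QQ\type d(X)$ is a nilmanifold and the $\CF\type d$-action on each fibre $\QQ\type d(x)$ is minimal, hence uniquely ergodic; so these averages converge uniformly in $x$ to $F(x)=\int\prod_{\epsilon\neq\emptyset}f_\epsilon(y_\epsilon)\,d\lambda_x(\by)$, where $\lambda_x$ is the unique $\CF\type d$-invariant measure on $\QQ\type d(x)$, and in particular they converge in $L^2(\mu)$. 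For the formula~\eqref{eq:limcubiclimit}, write $\mu_\Delta$ for the image of $\mu$ under $x\mapsto x\type d$ and $S_{\bn}=(T_1\type d)^{n_1}\cdots(T_d\type d)^{n_d}$; then $\int gF\,d\mu=\lim_N\frac1{N^d}\sum_{\bn}\int(g\otimes\bigotimes_{\epsilon\neq\emptyset}f_\epsilon)(S_{\bn}\bz)\,d\mu_\Delta(\bz)$. Any weak-$*$ limit point of $\frac1{N^d}\sum_{\bn}(S_{\bn})_*\mu_\Delta$ is $\CF\type d$-invariant (Cesàro averaging over a Følner sequence) and $T\type d$-invariant (as $T\type d$ commutes with each $S_{\bn}$ and preserves $\mu_\Delta$), hence $\CG\type d$-invariant, and it is supported on $\QQ\type d$; by unique ergodicity of $(\QQ\type d,\CG\type d)$ for a minimal nilsystem it equals $\mu\type d$. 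Since every point of $\QQ\type d(x)$ has $\emptyset$-coordinate equal to $x$, this yields exactly~\eqref{eq:limcubiclimit}; the general case (general ergodic $(X,\mu,T)$ and bounded $f_\epsilon,g$) then follows from the reductions above, both sides of~\eqref{eq:limcubiclimit} being continuous in the $f_\epsilon$ for the $\nnorm{\cdot}_d$ and $L^\infty$ topologies and factoring through $\mathcal Z_{d-1}$ (via Lemma~\ref{fact:seminorm} and the fact that $\mu\type d$ lifts along the factor map onto $\mathcal Z_{d-1}$).

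The main obstacle is this last step and, above all, the structural input it rests on: one needs the precise description of $\QQ\type d$ for nilsystems from~\cite{HK3}, and, more deeply, Theorem~\ref{fact:structure} itself — the identification of $\mathcal Z_{d-1}$ with an inverse limit of $(d-1)$-step nilsystems — which is the central result of~\cite{HK1} and is used here as a black box. The van der Corput estimates of the first step, by contrast, are routine though somewhat lengthy; the only mild subtlety there is arranging, via the cube symmetries, that the seminorm which survives is the one attached to the prescribed face $\epsilon_0$.
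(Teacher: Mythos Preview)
The paper does not prove this theorem at all: it is quoted verbatim as Theorem~1.2 of~\cite{HK1} and used as a black box in Section~\ref{section:preergodic}. So there is no ``paper's own proof'' to compare against.

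Your outline is essentially the argument of~\cite{HK1} itself: a generalized von Neumann bound via iterated van der Corput, reduction to the characteristic factor $\mathcal Z_{d-1}$, the structure theorem identifying $\mathcal Z_{d-1}$ with an inverse limit of $(d-1)$-step nilsystems, and finally convergence on nilsystems by equidistribution. You correctly flag that the heavy lifting is Theorem~\ref{fact:structure}, which you take as given. Two small points of care: (i) Proposition~\ref{prop:Qnil} in this paper only asserts that an inverse limit of $(d-1)$-step nilsystems is a system of order $d-1$; it does not, by itself, give that $\QQ\type d(x)$ is a nilmanifold or that the $\CF\type d$-action on it is uniquely ergodic. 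For that you need the more detailed structural results of~\cite{HK3} (or the original computations in~\cite{HK1}), and ``minimal $\Rightarrow$ uniquely ergodic'' is being used specifically for nilsystems, not in general. (ii) In identifying the limit measure with $\mu\type d$ you are implicitly using that $\mu\type d$ is the \emph{unique} $\CG\type d$-invariant measure on $\QQ\type d$ for a minimal nilsystem; this is true but is again a nilsystem-specific fact, not a consequence of minimality alone. With those caveats, your sketch is the standard route and is sound.
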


\begin{lemma}
\label{fact:boudlim}
Let $(X, \mu, T)$, $d$, 
$f_\epsilon$, $\emptyset\neq\epsilon\subset[d]$, and $F$ be as in 
Theorem~\ref{fact:cubiclimit}.  Then
$$
\norm F_{L^\infty(\mu)}\leq
\prod_{\substack{\epsilon\subset[d]\\ \epsilon\neq\emptyset}}
\norm{f_\epsilon}_{L^{2^d-1}(\mu)}\ .
$$
\end{lemma}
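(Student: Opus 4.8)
The plan is to get the $L^\infty$ bound on the limit $F$ by first establishing the corresponding bound for each of the finite averages in~\eqref{eq:cubiclimit} and then passing to the limit, using that $L^2$-convergence forces a subsequence that converges almost everywhere, so the (essential) sup-norm cannot increase in the limit. So the real content is a pointwise estimate, uniform in $N$, for
$$
F_N(x) = \frac 1{N^d}\sum_{0\leq n_1,\dots,n_d<N}
\prod_{\substack{\epsilon\subset[d]\\ \epsilon\neq\emptyset}}
f_\epsilon(T^{\bn\cdot\epsilon }x)\ .
$$

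First I would fix $x$ and regard the right-hand side as an integral (a normalized sum) over $\bn\in\{0,\dots,N-1\}^d$ of the product $\prod_{\emptyset\neq\epsilon\subset[d]}f_\epsilon(T^{\bn\cdot\epsilon}x)$. There are $2^d-1$ factors, and each nonempty $\epsilon\subset[d]$ gets weight $\tfrac1{2^d-1}$ in a H\"older splitting; since $(2^d-1)\cdot\tfrac1{2^d-1}=1$, generalized H\"older's inequality on the finite measure space $\{0,\dots,N-1\}^d$ with normalized counting measure gives
$$
|F_N(x)| \le \prod_{\substack{\epsilon\subset[d]\\ \epsilon\neq\emptyset}}
\left(\frac 1{N^d}\sum_{\bn}\bigl|f_\epsilon(T^{\bn\cdot\epsilon}x)\bigr|^{2^d-1}\right)^{1/(2^d-1)}\ .
$$
The key observation is that the inner average depends on $\bn$ only through $\bn\cdot\epsilon=\sum_{i\in\epsilon}n_i$: writing $g=|f_\epsilon|^{2^d-1}$, the average over $\bn$ of $g(T^{\bn\cdot\epsilon}x)$ collapses, after summing over the coordinates $n_i$ with $i\notin\epsilon$ (which contribute nothing) and over the remaining ones, to $\tfrac1N\sum_{0\le m<N'}c_m\,g(T^{m}x)$ for suitable nonnegative weights $c_m$ summing to $N^{|\epsilon|-1}\cdot$ (range length); in any case it is a convex combination of the values $g(T^m x)$, hence bounded by $\|g\|_{L^\infty(\mu)}=\|f_\epsilon\|_{L^\infty(\mu)}^{2^d-1}$ for $\mu$-a.e.\ $x$. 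That would give only the crude bound $\prod\|f_\epsilon\|_{L^\infty}$; to get the claimed $L^{2^d-1}(\mu)$ bound I instead integrate $|F_N|$-type quantities against a test function $g$ with $\|g\|_{L^1(\mu)}\le 1$ and use~\eqref{eq:limcubiclimit} together with $\mu$-invariance: the 1-dimensional marginals of $\mu\type d$ are $\mu$, so each $\int|f_\epsilon(x_\epsilon)|^{2^d-1}\,d\mu\type d = \|f_\epsilon\|_{2^d-1}^{2^d-1}$, and the H\"older splitting~\eqref{eq:limcubiclimit} yields $\bigl|\int g\,F\,d\mu\bigr|\le \prod_{\epsilon}\|f_\epsilon\|_{2^d-1}$ for all such $g$, whence $\|F\|_{L^\infty(\mu)}\le\prod_\epsilon\|f_\epsilon\|_{L^{2^d-1}(\mu)}$ by duality of $L^\infty$ with $L^1$.

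The main obstacle I anticipate is getting the exponent bookkeeping exactly right: there are precisely $2^d-1$ nonempty faces $\epsilon$, so the natural H\"older exponent is $p=2^d-1$ with all weights equal, and one must verify that applying H\"older inside the $\mu\type d$-integral in~\eqref{eq:limcubiclimit} — rather than to the discrete averages — is legitimate, which it is because $\mu\type d$ is a probability measure and each $f_\epsilon(x_\epsilon)$ lies in $L^{2^d-1}(\mu\type d)$ by the marginal property. Once the exponents line up, the argument is a clean combination of generalized H\"older on $\mu\type d$, the first-marginal identity, and $L^\infty$–$L^1$ duality.
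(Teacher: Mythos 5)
Your overall strategy --- test $F$ against $g$, rewrite $\int gF\,d\mu$ via \eqref{eq:limcubiclimit} as an integral against $\mu\type d$, apply H\"older there, and conclude by $L^\infty$--$L^1$ duality --- is the right skeleton, and it is the paper's. But the H\"older step as you describe it does not close. In the integral $\int g(x_\emptyset)\prod_{\emptyset\neq\epsilon\subset[d]} f_\epsilon(x_\epsilon)\,d\mu\type d(\bx)$ there are $2^d$ factors, not $2^d-1$: the $2^d-1$ functions $f_\epsilon$ \emph{and} the test function $g$. If you give each $f_\epsilon$ the exponent $2^d-1$, the reciprocals already sum to $1$, so $g$ is forced to carry the exponent $\infty$; H\"older then yields $\bigl|\int gF\,d\mu\bigr|\le \norm g_{L^\infty(\mu)}\prod_\epsilon\norm{f_\epsilon}_{L^{2^d-1}(\mu)}$, which by duality bounds $\norm F_{L^1(\mu)}$, not $\norm F_{L^\infty(\mu)}$. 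Conversely, insisting that $g$ enter through its $L^1$ norm forces every $f_\epsilon$ into $L^\infty$, which is exactly the crude bound you already rejected. The inequality you assert, $\bigl|\int gF\,d\mu\bigr|\le\norm g_{L^1(\mu)}\prod_\epsilon\norm{f_\epsilon}_{L^{2^d-1}(\mu)}$, is equivalent to the lemma, but the H\"older application you invoke, together with the one-dimensional marginal identity, does not prove it.

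The missing idea is to split the test function among the factors \emph{before} applying H\"older. Choose $h$ with $|h|^{2^d-1}=|g|$ and write $\bigl|g(x_\emptyset)\prod_\epsilon f_\epsilon(x_\epsilon)\bigr| = \prod_\epsilon \bigl|h(x_\emptyset)f_\epsilon(x_\epsilon)\bigr|$, a product of exactly $2^d-1$ factors. H\"older with all exponents equal to $2^d-1$ now gives $\prod_\epsilon\bigl(\int|h(x_\emptyset)f_\epsilon(x_\epsilon)|^{2^d-1}\,d\mu\type d\bigr)^{1/(2^d-1)}$, and here one needs the \emph{two}-dimensional marginal property of $\mu\type d$ (the image under $\bx\mapsto(x_\emptyset,x_\epsilon)$ is $\mu\times\mu$ for $\epsilon\neq\emptyset$), not merely the one-dimensional marginals you cite: it factors each integral as $\norm g_{L^1(\mu)}\norm{f_\epsilon}_{L^{2^d-1}(\mu)}^{2^d-1}$, so the product of the $(2^d-1)$-th roots is exactly $\norm g_{L^1(\mu)}\prod_\epsilon\norm{f_\epsilon}_{L^{2^d-1}(\mu)}$, and duality finishes the proof. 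This is the paper's argument; your first, discrete-average route can safely be discarded.
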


\begin{proof}
Let $g\in L^\infty(\mu)$ and choose a function $h$ with $h^{2^d-1}=g$.
By~\eqref{eq:limcubiclimit} and the H\"older Inequality,
$$
\bigl|\int gF\,d\mu\bigr|\leq
\Bigl(\prod_{\substack{\epsilon\subset[d]\\ \epsilon\neq\emptyset}}
\int |h(x_\emptyset)f_\epsilon(x_\epsilon)|^{2^d-1}\,d\mu\type d(\bx)
\Bigr)^{1/2^d-1}\ .
$$
Since each two dimensional marginal of $\mu\type d$ is equal to 
$\mu\times\mu$, this can be rewritten as 
$$
\Bigl(\prod_{\substack{\epsilon\subset[d]\\ \epsilon\neq\emptyset}}
\int |h(x)f_\epsilon(y)|^{2^d-1}\,d\mu(x)\,d\mu(y)\Bigr)^{1/2^d-1}
=\norm g_{L^1(\mu)}
\prod_{\substack{\epsilon\subset[d]\\ \epsilon\neq\emptyset}}
\norm{f_\epsilon}_{L^{2^d-1}(\mu)}
$$
and the result follows.
\end{proof}

\subsection{Dual functions}
Here again, $(X,\mu,T)$ is an ergodic system.
Following the notation and terminology of~\cite{HK3}, for 
 every $f\in L^\infty(\mu)$, the  limit function
\begin{equation}
\label{eq:defdual}
\lim_{N\to+\infty}\frac 1{N^d}\sum_{n_1,\dots,n_d=0}^{N-1}
\prod_{\emptyset\neq \epsilon \subset d 
}f(T^{\bn \cdot \epsilon}x)
\end{equation}
is called the \emph{dual function of order $d$ of $f$} and is written 
$\CD_df$.
It is worth noting that $\CD_df$ is only defined as an element of 
$L^2(\mu)$, and thus is defined almost everywhere.

By~\eqref{eq:limcubiclimit} and~\eqref{eq:seminorm}, for every $f\in 
L^\infty(\mu)$ we have that
\begin{equation}
\label{eq:duality}
 \int f\,\CD_df\,d\mu=\nnorm f_d^{2^d}\ .
\end{equation}
It follows from Lemma~\ref{fact:boudlim} that:
\begin{lemma} 
\label{fact:boundedDd}
If $(X, \mu, T)$ is an ergodic  system and $d\geq 1$ is an integer, then
for every $f\in L ^\infty(\mu)$:
$$
\norm{ \CD_d f}_{L^\infty(\mu)}\leq \norm f_{L^{2^d-1}(\mu)}^{2^d-1}\ .
$$
Moreover, the map $\CD_d$ extends to a 
continuous map from $L^{2^d-1}(\mu)$ to $L^\infty(\mu)$. 
\end{lemma}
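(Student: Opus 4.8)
The first inequality will be obtained as the special case of Lemma~\ref{fact:boudlim} in which $f_\epsilon=f$ for every nonempty $\epsilon\subset[d]$. Comparing~\eqref{eq:defdual} with~\eqref{eq:cubiclimit}, the dual function $\CD_df$ is exactly the $L^2(\mu)$-limit $F$ of Theorem~\ref{fact:cubiclimit} for this choice; since there are $2^d-1$ nonempty subsets of $[d]$, Lemma~\ref{fact:boudlim} immediately yields $\norm{\CD_df}_{L^\infty(\mu)}\le\norm f_{L^{2^d-1}(\mu)}^{2^d-1}$.

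For the second statement, the plan is first to show that $\CD_d$ is Lipschitz on bounded subsets of $L^\infty(\mu)$ --- with the $L^{2^d-1}$-norm on the source and the $L^\infty$-norm on the target --- and then to extend by density and completeness. To get the Lipschitz bound I would use the multilinear functional furnished by Theorem~\ref{fact:cubiclimit}: writing $F[h_\epsilon\colon\emptyset\neq\epsilon\subset[d]]$ for the limit of the averages~\eqref{eq:cubiclimit} with integrands $h_\epsilon$, this functional is linear in each $h_\epsilon$, satisfies $\CD_df=F[f,\dots,f]$, and by Lemma~\ref{fact:boudlim} has $L^\infty$-norm at most $\prod_\epsilon\norm{h_\epsilon}_{L^{2^d-1}(\mu)}$. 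A telescoping expansion over the $2^d-1$ coordinates then writes $\CD_df-\CD_dg$ as a sum of $2^d-1$ terms, each of the form $F[\dots]$ with one entry equal to $f-g$ and the others equal to $f$ or to $g$; applying Lemma~\ref{fact:boudlim} to each term and summing gives, whenever $\norm f_{L^{2^d-1}(\mu)}$ and $\norm g_{L^{2^d-1}(\mu)}$ are at most $R$,
$$
\norm{\CD_df-\CD_dg}_{L^\infty(\mu)}\le(2^d-1)\,R^{2^d-2}\,\norm{f-g}_{L^{2^d-1}(\mu)}\ .
$$

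With this estimate, the extension is routine. Since $\mu$ is a probability measure, $L^\infty(\mu)$ is dense in $L^{2^d-1}(\mu)$; for $f\in L^{2^d-1}(\mu)$ I would pick $f_n\in L^\infty(\mu)$ with $f_n\to f$ in $L^{2^d-1}(\mu)$, note that $(f_n)$ is $L^{2^d-1}$-bounded, and conclude from the displayed estimate that $(\CD_df_n)$ is Cauchy in the complete space $L^\infty(\mu)$; one then defines $\CD_df$ to be its limit, checks that this is independent of the approximating sequence (again via the estimate), and passes the Lipschitz-on-bounded-sets bound --- hence continuity of $\CD_d\colon L^{2^d-1}(\mu)\to L^\infty(\mu)$ --- to the closure. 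The only subtlety, and the point I expect to be the main (if minor) obstacle, is that $\CD_d$ is not linear but homogeneous of degree $2^d-1$; it is precisely the multilinear (polarized) description coming from Theorem~\ref{fact:cubiclimit} that legitimizes the telescoping step and the density argument.
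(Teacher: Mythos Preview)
Your proposal is correct and is exactly the argument the paper has in mind: the paper gives no proof beyond the single line ``It follows from Lemma~\ref{fact:boudlim},'' and what you have written is precisely the natural way to unpack that remark --- the first inequality by specializing $f_\epsilon=f$, and the extension by telescoping the multilinear form to get local Lipschitz continuity, then using density of $L^\infty(\mu)$ in $L^{2^d-1}(\mu)$.
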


We remark that if $0\leq f\leq g$, then  $0\leq\CD_d f \leq\CD_d g$.

\begin{lemma}
\label{fact:positive}
If $(X, \mu, T)$ is an ergodic system and $d\geq 1$ is an integer, 
then for every $A\subset X$ we have 
$\CD_d\one_A(x)>0$ for $\mu$-almost every $x\in A$.
\end{lemma}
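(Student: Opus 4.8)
The plan is to argue by contradiction, combining the duality identity~\eqref{eq:duality} with the lower bound $\bigl|\int f\,d\mu\bigr|\le\nnorm f_d$ of Lemma~\ref{fact:seminorm}\eqref{it:seminormintegral}. First I would recall that $\CD_d\one_A$ is a well-defined element of $L^2(\mu)$ and that it is nonnegative almost everywhere, being an $L^2$-limit of the nonnegative functions $\frac 1{N^d}\sum_{n_1,\dots,n_d=0}^{N-1}\prod_{\emptyset\neq\epsilon\subset[d]}\one_A(T^{\bn\cdot\epsilon}x)$ (Theorem~\ref{fact:cubiclimit}). I then set
$$B=\{x\in X\colon\CD_d\one_A(x)=0\},$$
which is measurable up to a $\mu$-null set; the assertion to be proved is exactly that $\mu(A\cap B)=0$.

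Assume for contradiction that $\mu(A\cap B)>0$ and put $g=\one_{A\cap B}$. Since $0\le g\le\one_A$, the monotonicity remark above ($0\le f\le g\Rightarrow 0\le\CD_d f\le\CD_d g$) gives $0\le\CD_d g\le\CD_d\one_A$ almost everywhere; because $A\cap B\subset B$, the right-hand side vanishes on $A\cap B$, so $\CD_d g=0$ almost everywhere on $A\cap B$. As $g$ is supported on $A\cap B$, this yields
$$\int g\,\CD_d g\,d\mu=\int_{A\cap B}\CD_d g\,d\mu=0.$$
Now the duality identity~\eqref{eq:duality} gives $\nnorm g_d^{2^d}=\int g\,\CD_d g\,d\mu=0$, hence $\nnorm g_d=0$, and then Lemma~\ref{fact:seminorm}\eqref{it:seminormintegral} forces $\mu(A\cap B)=\bigl|\int g\,d\mu\bigr|\le\nnorm g_d=0$, a contradiction. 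Hence $\mu(A\cap B)=0$, which is the claim.

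I expect no genuinely difficult step in this argument; the only points needing care are the measure-theoretic bookkeeping — measurability of $B$ and reading all inequalities between dual functions in the almost-everywhere sense — together with the observation that the monotonicity remark and the identity~\eqref{eq:duality}, both recorded above for bounded functions, apply verbatim to the bounded function $g=\one_{A\cap B}$, which they do.
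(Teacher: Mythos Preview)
Your argument is correct and is essentially the paper's own proof: defining the vanishing set $B$, applying the monotonicity $\CD_d\one_{A\cap B}\le\CD_d\one_A$, using the duality identity~\eqref{eq:duality} to get $\nnorm{\one_{A\cap B}}_d=0$, and then invoking Lemma~\ref{fact:seminorm}\eqref{it:seminormintegral} to conclude $\mu(A\cap B)=0$. The only cosmetic difference is that the paper phrases this as a direct chain of inequalities rather than a contradiction.
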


\begin{proof}
Let $B=\{x\in A\colon \CD_d\one_A(x)=0\}$. 
By part~\eqref{it:seminormintegral} of Lemma~\ref{fact:seminorm} 
and~\eqref{eq:duality}, since $\CD_d\one_B\leq\CD_d\one_A$ we have that
$$
\mu(B)^{2^d}\leq \nnorm{\one_B}_d^{2^d}
=
\int_B\CD_d\one_B(x)\,d\mu(x)\leq 
\int_B\CD_d\one_A(x)\,d\mu(x)=0\ .
$$
Thus $\mu(B)=0$.
\end{proof}

Using the definition~\eqref{eq:defdual} of the dual function, we 
immediately deduce:
\begin{lemma}
\label{fact:dualfactor} Let
$p\colon (X,\mu,T)\to (X',\mu',T)$ be a 
measure theoretic factor map. For every $f\in L^\infty(\mu')$
we have
 $(\CD_df)\circ p=\CD_d(f\circ p)$.
\end{lemma}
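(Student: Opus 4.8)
The plan is to push the factor map through the finite-time averages that define the dual function and then pass to the limit. First I would record the elementary fact that, since $p$ transports $\mu$ to $\mu'$, the map $g\mapsto g\circ p$ is an isometric embedding of $L^2(\mu')$ into $L^2(\mu)$ and of $L^\infty(\mu')$ into $L^\infty(\mu)$. In particular $f\circ p\in L^\infty(\mu)$, so $\CD_d(f\circ p)$ is well defined by Theorem~\ref{fact:cubiclimit} applied to the system $(X,\mu,T)$ with every function $f_\epsilon$ equal to $f\circ p$.

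Next, for each integer $N\geq 1$ and $g\in L^\infty$ write
$$A_N(g)(x)=\frac1{N^d}\sum_{n_1,\dots,n_d=0}^{N-1}\prod_{\emptyset\neq\epsilon\subset[d]}g(T^{\bn\cdot\epsilon}x)\ .$$
Since $p\circ T=T\circ p$ holds $\mu$-almost everywhere, iterating gives $p\circ T^{\bn\cdot\epsilon}=T^{\bn\cdot\epsilon}\circ p$ $\mu$-almost everywhere for every $\bn\in\Z^d$ and every $\epsilon\subset[d]$; substituting this into the formula for $A_N$ yields $A_N(f)\circ p=A_N(f\circ p)$ as elements of $L^2(\mu)$, for every $N$.

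Finally I would let $N\to\infty$. By Theorem~\ref{fact:cubiclimit}, $A_N(f)\to\CD_d f$ in $L^2(\mu')$ and $A_N(f\circ p)\to\CD_d(f\circ p)$ in $L^2(\mu)$. Composing the first convergence with $p$ and using that $g\mapsto g\circ p$ is continuous (indeed isometric) from $L^2(\mu')$ to $L^2(\mu)$, we obtain $A_N(f)\circ p\to(\CD_d f)\circ p$ in $L^2(\mu)$; comparing with the second convergence and invoking uniqueness of $L^2(\mu)$-limits gives $(\CD_d f)\circ p=\CD_d(f\circ p)$. There is essentially no serious obstacle: the only point needing a little care is that the identities relating $p$ and $T$ hold merely almost everywhere, which is harmless since the dual function lives in $L^2$ and the whole argument takes place there.
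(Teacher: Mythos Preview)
Your argument is correct and is exactly the computation the paper has in mind: the paper simply records that the lemma follows immediately from the definition~\eqref{eq:defdual} of $\CD_d$, and your proof spells out that deduction by pushing $p$ through the finite averages and passing to the $L^2$-limit. There is no difference in approach, only in level of detail.
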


By Theorem~\ref{fact:cubiclimit}, it follows that:
\begin{lemma}
\label{fact:suppotymud}
Let $(X,T)$ be a minimal topological dynamical system and $\mu$ be
an invariant ergodic measure on $X$. Then the measure $\mu\type d$ is 
concentrated on the subset $\QQ\type d$ of $X\type d$.
\end{lemma}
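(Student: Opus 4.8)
The plan is to show that the support of $\mu\type d$ is contained in $\QQ\type d$ by testing against continuous functions and using the explicit description of $\mu\type d$ provided by Theorem~\ref{fact:cubiclimit}. First I would reduce to a statement about integrals: it suffices to show that for every continuous function $f$ on $X\type d$ that vanishes on $\QQ\type d$, we have $\int f\,d\mu\type d = 0$; since $\QQ\type d$ is closed, this gives $\mu\type d(X\type d\setminus\QQ\type d)=0$ by a standard argument (exhaust the open complement by functions supported there). By density, it is enough to treat $f$ of the product form $f(\bx)=\prod_{\epsilon\subset[d]}f_\epsilon(x_\epsilon)$ with each $f_\epsilon$ continuous on $X$, or at least to use~\eqref{eq:limcubiclimit} which identifies $\int g(x_\emptyset)\prod_{\epsilon\neq\emptyset}f_\epsilon(x_\epsilon)\,d\mu\type d(\bx)$ with $\int g\cdot F\,d\mu$, where $F$ is the $L^2$-limit of the cubic averages~\eqref{eq:cubiclimit}.

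The key point is that each average~\eqref{eq:cubiclimit}, as a function of $x$, is an average over $\bn\in\{0,\dots,N-1\}^d$ of the functions $x\mapsto\prod_{\epsilon\neq\emptyset}f_\epsilon(T^{\bn\cdot\epsilon}x)$. Now for fixed $x$ and fixed $\bn$, the point $(T^{\bn\cdot\epsilon}x\colon\epsilon\subset[d])$ lies in $\QQ\type d$ by the very definition of $\QQ\type d$ (Definition~\ref{def:parallelepipeds}); here I use the remark in the excerpt that the $n_i$ may be taken in $\N$, or equivalently invertibility of $T$. Hence if $f$ is continuous on $X\type d$ and vanishes on $\QQ\type d$, then $\prod$ evaluated at such a point is zero, so each average~\eqref{eq:cubiclimit} built from the corresponding $f_\epsilon$ vanishes identically, and therefore its $L^2$-limit $F$ is zero $\mu$-a.e. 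By~\eqref{eq:limcubiclimit}, $\int g(x_\emptyset)\prod_{\epsilon\neq\emptyset}f_\epsilon(x_\epsilon)\,d\mu\type d=0$ for all such product functions and all $g\in L^\infty(\mu)$.

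To finish, I would use a Stone--Weierstrass/density argument: the span of functions of the form $g(x_\emptyset)\prod_{\epsilon\neq\emptyset}f_\epsilon(x_\epsilon)$ with $g,f_\epsilon\in C(X)$ is dense in $C(X\type d)$ for the uniform norm, so the vanishing of $\int h\,d\mu\type d$ for all $h$ in this span that happen to vanish on $\QQ\type d$ propagates to all $h\in C(X\type d)$ vanishing on $\QQ\type d$. Concretely, given $h\in C(X\type d)$ supported in the open set $X\type d\setminus\QQ\type d$, approximate $h$ uniformly by such products $h_k$; since $\QQ\type d$ is closed one can arrange (multiplying by a fixed continuous cutoff vanishing near $\QQ\type d$) that $\int h_k\,d\mu\type d\to 0$, hence $\int h\,d\mu\type d=0$, and letting $h$ increase to $\one_{X\type d\setminus\QQ\type d}$ gives $\mu\type d(X\type d\setminus\QQ\type d)=0$.

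The main obstacle is the bookkeeping in the last density step: one must be careful that the approximating products can be taken to vanish on (a neighbourhood of) $\QQ\type d$ so that the identity $F\equiv 0$ genuinely applies; multiplying by an appropriate continuous cutoff function that is $1$ on $\mathrm{supp}(h)$ and $0$ on $\QQ\type d$ handles this cleanly. Everything else is routine once one observes the elementary but crucial fact that the pre-limit cubic averages are supported, pointwise in $x$, on parallelepipeds lying in $\QQ\type d$.
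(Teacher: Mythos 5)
Your argument is correct and is exactly the elaboration the paper intends: the paper gives no separate proof, deducing the lemma directly from Theorem~\ref{fact:cubiclimit} via the observation that the pre-limit cubic averages are carried by $\QQ\type d$. One small quibble with your final density step: the cutoff device is unnecessary and does not quite do what you want (multiplying the approximants $h_k$ by a cutoff takes them out of the span of product functions, so~\eqref{eq:limcubiclimit} no longer computes their integrals); the clean way to close is to note that the functionals $h\mapsto N^{-d}\sum_{\bn}\int h\bigl((T^{\bn\cdot\epsilon}x)_\epsilon\bigr)\,d\mu(x)$ are probability measures concentrated on the closed set $\QQ\type d$ which converge to $\int h\,d\mu\type d$ on the dense span of products, hence (being uniformly bounded by $\norm{h}_\infty$) for every $h\in\CC(X\type d)$, so that any continuous $h$ vanishing on $\QQ\type d$ satisfies $\int h\,d\mu\type d=0$ and regularity finishes the proof.
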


\begin{lemma}
\label{fact:Ddzero}
Let $(X,T)$ be a minimal system of order $d-1$ and 
let $\mu$ be an invariant ergodic measure on $X$. Let $d_X$ denote 
a distance on $X$ defining the topology of this space and for every 
$x\in X$ and $r>0$, let $B(x,r)$ denote the ball centered at $x$ 
of radius $r$ with respect to the distance $d_X$. 
Then for every $\eta>0$, there exists 
$\delta > 0$ 
such that for every $x\in X$,  $\CD_d\one_{B(x,\delta)}=0$  $\mu$-almost 
everywhere on the complement of $B(x,\eta)$
\end{lemma}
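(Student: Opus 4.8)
The plan is to prove the statement by contradiction using the compactness of $X$ and the key structural fact (Proposition~\ref{prop:important}/Corollary~\ref{cor:RPRPs}) that in a system of order $d-1$ the relation $\RP\type d$ is trivial, together with the positivity of dual functions on balls (Lemma~\ref{fact:positive}) and the fact that $\mu\type d$ is concentrated on $\QQ\type d$ (Lemma~\ref{fact:suppotymud}). Concretely, suppose the conclusion fails for some $\eta>0$. Then for every $n\geq 1$ there is a point $x_n\in X$ such that $\CD_d\one_{B(x_n,1/n)}$ is \emph{not} $\mu$-a.e. zero on the complement of $B(x_n,\eta)$; equivalently, the set $A_n = \{y\notin B(x_n,\eta)\colon \CD_d\one_{B(x_n,1/n)}(y)>0\}$ has positive measure.

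First I would unwind what $\CD_d\one_{B(x_n,1/n)}(y)>0$ means using~\eqref{eq:limcubiclimit} and Lemma~\ref{fact:suppotymud}: it forces the point $(y, z, z, \ldots)$-type configurations into $\QQ\type d$. More precisely, by~\eqref{eq:limcubiclimit} applied with $f_\epsilon = \one_{B(x_n,1/n)}$ for all $\emptyset\neq\epsilon\subset[d]$ and $g$ supported near $y$, positivity of $\CD_d\one_{B(x_n,1/n)}$ on a set of positive measure gives a point $\bx\in\QQ\type d$ with $x_\emptyset$ close to $y$ and $x_\epsilon\in \overline{B(x_n,1/n)}$ for all $\epsilon\neq\emptyset$. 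Passing to the limit as $n\to\infty$ along a subsequence for which $x_n\to x_\infty$ and $y_n\to y_\infty$ (possible by compactness, choosing $y_n\in A_n$), we obtain a point of $\QQ\type d$ of the form $(y_\infty, x_\infty, x_\infty, \ldots, x_\infty)$ with $d_X(y_\infty, x_\infty)\geq \eta$, since each $y_n$ lies outside $B(x_n,\eta)$.

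Next I would invoke the hypothesis that $(X,T)$ is a system of order $d-1$: by Corollary~\ref{cor:RPRPs} (or directly by the triviality of $\RP\type d$ noted after Corollary~\ref{lemma:stupid} and in the ``Summarizing'' subsection), the only points $(y,x,x,\ldots,x)$ lying in $\QQ\type d$ are the diagonal ones, i.e. $y_\infty = x_\infty$. This contradicts $d_X(y_\infty,x_\infty)\geq\eta>0$, completing the argument. One subtlety to handle carefully: to extract the $\QQ\type d$ configuration from the positivity of the dual function I should note that the integral identity~\eqref{eq:limcubiclimit} with $g=\one_{B(y,s)}$ gives $\int_{B(y,s)}\CD_d\one_{B(x_n,1/n)}\,d\mu = \mu\type d\bigl(\{ \bx\colon x_\emptyset\in B(y,s),\ x_\epsilon\in B(x_n,1/n)\ \forall\epsilon\neq\emptyset\}\bigr)$, so a positive left side produces (via Lemma~\ref{fact:suppotymud}) an actual point of $\QQ\type d$ in the closure of that set; letting $s\to 0$ and $n\to\infty$ pins down the diagonal-like limit point.

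The main obstacle I anticipate is the measure-theoretic bookkeeping in the first step: one must be careful that ``$\CD_d\one_{B(x_n,1/n)}$ fails to vanish a.e. off $B(x_n,\eta)$'' is genuinely converted into a statement about the support of $\mu\type d$ rather than merely an $L^2$ statement about the dual function (which is only defined a.e.). The clean way is to integrate against indicator functions of small balls, use~\eqref{eq:limcubiclimit}, and then use the fact that $\overline{B(x_n,1/n)}$ shrinks to $\{x_\infty\}$; the uniformity over $x\in X$ in the conclusion then comes for free from the contradiction scheme, since the negation already supplies a single sequence $(x_n)$ and compactness does the rest. No quantitative estimate on $\delta$ in terms of $\eta$ is needed — only existence — which is exactly what the compactness argument delivers.
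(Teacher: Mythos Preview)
Your argument is correct and rests on the same ingredients as the paper --- the concentration of $\mu\type d$ on $\QQ\type d$ (Lemma~\ref{fact:suppotymud}), the integral identity~\eqref{eq:limcubiclimit}, and the defining property of a system of order $d-1$ --- but organizes them as a contradiction via compactness rather than directly. The paper instead observes that, since $(X,T)$ has order $d-1$, the first coordinate of any $\bx\in\QQ\type d$ is a \emph{continuous} function $J$ of the remaining $2^d-1$ coordinates (its graph $\QQ\type d$ is closed over a compact base); uniform continuity of $J$ together with $J(x,\ldots,x)=x$ immediately yields the required $\delta$, uniformly in $x$, and a single application of~\eqref{eq:limcubiclimit} finishes. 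Your sequential argument is essentially the standard proof-by-contradiction of this uniform continuity, so the two approaches are equivalent, the paper's being somewhat shorter and more transparent about where the uniformity in $x$ comes from.

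One small correction: the fact you invoke at the end is not Corollary~\ref{cor:RPRPs} or triviality of $\RP\type d$ (those concern $\QQ\type{d+1}$, an off-by-one), but simply property~(2) of Theorem~\ref{theorem:main}, i.e.\ the definition of a system of order $d-1$: $(y,x,x,\ldots,x)\in\QQ\type d$ forces $y=x$.
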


\begin{proof}
By definition of a system of order $d-1$, the last coordinate of an 
element of $\QQ\type d$ is a function of the other ones. Using the 
symmetries of $\QQ\type d$, we have that the same property holds with the first 
coordinate substituted for the last one. 
Therefore, writting $\QQ\type d_*$ for 
$\QQ\type d$ without the first coordinate, 
 there exists a map 
$J\colon \QQ\type d_*\to X$ 
such 
that  for every  $\bx\in\QQ\type d$, 
$$
x_\emptyset=J(x_\epsilon\colon \epsilon\subset[d],\ 
\epsilon\neq\emptyset)\ .
$$
The graph of this map is the closed subset $\QQ\type d$ of 
$\QQ\type d_*\times X$ and thus is continuous.

Fix $\eta>0$. Since $J$ is uniformly continuous and satisfies 
$J(x,\dots,x)=x$ for every $x$, there exists 
$\delta>0$ such that for every $x\in X$, the set
$$
(X\setminus B(x,\eta))\times B(x,\delta)\times\dots\times B(x,\delta)
$$
has empty intersection with $\QQ\type d$.  Thus by Lemma \ref{fact:suppotymud} it has zero $\mu\type 
d$-measure. By Theorem~\ref{fact:cubiclimit} and the definition of 
$\CD_d\one_B$, we have that 
$$
 \int\one_{X\setminus B(x,\eta)}\CD_d\one_{B(x,\delta)}\,d\mu=0\ .
\qed
$$
\renewcommand{\qed}{}
\end{proof}

\subsection{Systems with continuous dual functions}
It is convenient to give a name to the following, although we only make
use of it within proofs:
\begin{definition}
\label{def:propdual}
Let $(X,T)$ be a minimal system and let $\mu$ an ergodic invariant 
measure on $X$. We say that $(X,T,\mu)$ has property $\prop$ if 
whenever $f_\epsilon$, $\emptyset\neq\epsilon\subset[d]$, are 
continuous functions on $X$, the averages~\eqref{eq:cubiclimit} 
converge everywhere and uniformly.
\end{definition}

If this property holds, then in particular, for every continuous 
function $f$ on $X$,
the averages ~\eqref{eq:defdual} converge everywhere and uniformly
 for every continuous function $f$ on $X$. The limit of these averages 
coincides almost everywhere with the function $\CD_df$ defined above
and so we also denote it by $\CD_df$.

\begin{proposition}
\label{fact:uniform}
Let $(X,T)$ be an inverse limit of minimal $(d-1)$-step nilsystems 
and let $\mu$ be the invariant measure of this system. Then $(X,\mu,T)$ has 
property $\prop$.
\end{proposition}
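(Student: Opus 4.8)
The plan is to establish property $\prop$ first for a single minimal $(d-1)$-step nilsystem $X = G/\Gamma$, and then to pass to inverse limits. For the nilsystem case, the key observation is that the averages~\eqref{eq:cubiclimit} are governed by the equidistribution of cube points in a suitable ``cubespace'' nilmanifold. Precisely, recall from~\cite{HK3} that for a $(d-1)$-step nilmanifold $X=G/\Gamma$, the set $\QQ\type d(X)$ is itself a nilmanifold $\CQd = G\type d/\Gamma\type d$, where $G\type d$ is the closed subgroup of $G^{2^d}$ generated by the diagonal and the $d$ face groups, and $\Gamma\type d = G\type d \cap \Gamma^{2^d}$; moreover $\mu\type d$ is the Haar measure on $\CQd$. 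The face transformations together with the diagonal transformation generate the $\Z^d$-action on $\CQd$ by which the cube $(T^{\bn\cdot\epsilon}x\colon\epsilon)$ is the orbit of the diagonal point $x\type d$ under $(n_1,\dots,n_d)$. The first step is to invoke the uniform distribution theorem for polynomial orbits on nilmanifolds (Leibman~\cite{Le}, or the relevant statement in~\cite{HK3}): since $X$ is minimal, for \emph{every} $x\in X$ the orbit closure of $x\type d$ under $\CF\type d$ is a subnilmanifold of $\CQd$ on which the $\Z^d$-action by the face transformations is uniquely ergodic, and the averages~\eqref{eq:cubiclimit} at the point $x$ converge to the integral of $\prod_{\epsilon\neq\emptyset} f_\epsilon(x_\epsilon)$ against the corresponding Haar measure; crucially, by the quantitative equidistribution results this convergence is \emph{uniform in $x$} when the $f_\epsilon$ are continuous. (Continuity of $f_\epsilon$ is what turns the pointwise-a.e. convergence of Theorem~\ref{fact:cubiclimit} into everywhere-uniform convergence; one approximates $f_\epsilon$ by smooth functions and uses the uniform bound from Lemma~\ref{fact:boudlim} for the error terms.) This gives property $\prop$ for each $(d-1)$-step minimal nilsystem.

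The second step is the passage to the inverse limit $X = \varprojlim (X_j, T_j)$ with factor maps $p_j\colon X\to X_j$. Given continuous functions $f_\epsilon$ on $X$ and $\varepsilon>0$, one first approximates each $f_\epsilon$ uniformly to within $\varepsilon$ by a function of the form $g_\epsilon\circ p_j$ for some fixed large $j$, where $g_\epsilon$ is continuous on $X_j$; this is possible because $\bigcup_j p_j^*C(X_j)$ is dense in $C(X)$ (the $\sigma$-algebras $p_j\inv(\CB_j)$ generate $\CB$ and the systems are topological inverse limits, so the coordinate functions separate points). By Lemma~\ref{lemma:Qfactors}, a cube point $(T^{\bn\cdot\epsilon}x\colon\epsilon)$ in $X$ projects under $p_j\type d$ to the cube point $(T^{\bn\cdot\epsilon}p_j(x)\colon\epsilon)$ in $X_j$, so
\[
\frac 1{N^d}\sum_{0\leq n_1,\dots,n_d<N}\prod_{\epsilon\neq\emptyset} (g_\epsilon\circ p_j)(T^{\bn\cdot\epsilon}x)
= \Bigl(\frac 1{N^d}\sum_{0\leq n_1,\dots,n_d<N}\prod_{\epsilon\neq\emptyset} g_\epsilon(T^{\bn\cdot\epsilon}y)\Bigr)\Big|_{y=p_j(x)},
\]
and the right-hand side converges uniformly in $y\in X_j$ by the nilsystem case, hence uniformly in $x\in X$. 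A standard telescoping estimate (replacing the $f_\epsilon$ by the $g_\epsilon\circ p_j$ one at a time, controlling each error by $\varepsilon$ times the product of sup-norms of the remaining factors, uniformly in $N$ and $x$) then shows that the original averages for the $f_\epsilon$ form a uniform Cauchy sequence, hence converge everywhere and uniformly. Finally, since $(X,T,\mu)$ is uniquely ergodic (as noted in Section~\ref{subsec:invelimit}, inverse limits of minimal nilsystems are uniquely ergodic) and $\mu\type d$ is the relevant invariant measure, the uniform limit agrees $\mu\type d$-a.e. with the $L^2$-limit of Theorem~\ref{fact:cubiclimit}, so in particular the limit of~\eqref{eq:defdual} coincides a.e. with $\CD_d f$.

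The main obstacle is the first step: upgrading the $L^2$- and a.e.-convergence of~\eqref{eq:cubiclimit} to \emph{everywhere} and \emph{uniform} convergence on a single nilsystem. This is exactly the point where one needs the structural input that $\QQ\type d(X)$ is a nilmanifold and that face-orbit closures of diagonal points are uniform in $x$; without invertibility/minimality this could fail. I would lean on the quantitative Leibman-type equidistribution theorem rather than trying to reprove it, and the only real work is checking that for $x$ ranging over the compact set $X$ the relevant subnilmanifolds and their Haar measures vary in a way compatible with a uniform error bound — but since $X$ is connected and the construction of $\QQ\type d$ is $\CG\type d$-equivariant, minimality of $(\QQ\type d,\CG\type d)$ (Lemma~\ref{lemma:Qminimal}) forces all these orbit closures to coincide, which removes the difficulty.
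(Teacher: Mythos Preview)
Your approach is essentially the same as the paper's: for a single $(d-1)$-step nilsystem the paper simply cites Corollary~5.2 of~\cite{HK3} for everywhere-uniform convergence of the averages~\eqref{eq:cubiclimit}, and then passes to the inverse limit by uniformly approximating each continuous $f_\epsilon$ by a function pulled back from one of the nilsystem factors, exactly as in your second step. One minor caveat: your closing remark that minimality of $(\QQ\type d,\CG\type d)$ forces the $\CF\type d$-orbit closures of the diagonal points $x\type d$ to coincide is not literally correct (these are the fibers $\K\type d(x)$, which genuinely depend on $x$), but this does not affect your argument since you are invoking the cited equidistribution result rather than reproving it.
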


\begin{proof}
Assume first that $(X,\mu,T)$ is a $(d-1)$-step  ergodic nilsystem. 
In~\cite{HK3} (Corollary 5.2), the convergence 
of the averages~\eqref{eq:cubiclimit} is shown to 
hold everywhere and this convergence is uniform when
the functions $f_\epsilon$,
$\emptyset\neq\epsilon\subset [d]$, are continuous.

Assume now that $(X,\mu,T)$ is as in the statement.
Every continuous function on $X$ can be approximated uniformly by  
a continuous function arising from one of the nilsystems which are 
factors of $X$. By density, the result also holds 
in this case.
\end{proof}

We now establish some properties of systems with property $\prop$.
We write $\CC(X)$ for the algebra of continuous functions on $X$. We 
always assume that $\CC(X)$ is endowed with the norm of  uniform 
convergence.

By Lemma~\ref{fact:boundedDd} and density:
\begin{lemma}
\label{fact:continuousDd}
Assume that the ergodic system $(X,\mu,T)$ has property $\prop$.
\begin{itemize}
\item
For every $f\in L^{2^d-1}(\mu)$, the 
function $\CD_df$ is equal $\mu$-almost everywhere to a continuous 
function on $X$,  which we also denote by $\CD_df$, called
the dual function of $f$.
\item
The map $f\mapsto\CD_df$ is continuous from $L^{2^d-1}(\mu)$ to
$\CC(X)$.
\end{itemize}
\end{lemma}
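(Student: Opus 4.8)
The plan is to combine the $L^\infty$-bound and the continuous extension of $\CD_d$ from Lemma~\ref{fact:boundedDd} with the density of $\CC(X)$ in $L^{2^d-1}(\mu)$, using one elementary topological observation to convert $L^\infty(\mu)$-convergence into uniform convergence.

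First I would record that, since $(X,T)$ is minimal, the support of the invariant measure $\mu$ (a nonempty closed invariant set) is all of $X$; hence for every $g\in\CC(X)$ one has $\norm{g}_{L^\infty(\mu)}=\sup_{x\in X}|g(x)|$, so that $\CC(X)$ sits isometrically inside $L^\infty(\mu)$. Next, for a continuous $f$, property~$\prop$ gives that the averages~\eqref{eq:cubiclimit} with all $f_\epsilon=f$ converge uniformly, so their limit is continuous; as remarked after Definition~\ref{def:propdual} this limit coincides $\mu$-almost everywhere with $\CD_df$, so the value at $f$ of the extension of $\CD_d$ provided by Lemma~\ref{fact:boundedDd} has a continuous representative, which is the continuous function we call $\CD_df$.

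Then, for an arbitrary $f\in L^{2^d-1}(\mu)$, I would pick $f_n\in\CC(X)$ with $f_n\to f$ in $L^{2^d-1}(\mu)$; by the continuity of $\CD_d\colon L^{2^d-1}(\mu)\to L^\infty(\mu)$ from Lemma~\ref{fact:boundedDd} we get $\CD_df_n\to\CD_df$ in $L^\infty(\mu)$, so $(\CD_df_n)$ is Cauchy in $L^\infty(\mu)$ and, by the isometry above, Cauchy in $(\CC(X),\norm{\,\cdot\,}_\infty)$; it therefore converges uniformly to some $g\in\CC(X)$, and since uniform convergence implies $L^\infty(\mu)$-convergence, $g=\CD_df$ $\mu$-almost everywhere. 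This gives the first assertion, and the second is then immediate: if $f_n\to f$ in $L^{2^d-1}(\mu)$ then $\CD_df_n\to\CD_df$ in $L^\infty(\mu)$ by Lemma~\ref{fact:boundedDd}, hence uniformly, because all the functions involved are continuous and the two norms agree on $\CC(X)$.

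I do not expect a real obstacle here; the only points requiring care are the passage from $L^\infty(\mu)$-convergence to uniform convergence — which is precisely where minimality (through the full support of $\mu$) enters — and the mild bookkeeping that for continuous $f$ the continuous function obtained from property~$\prop$ genuinely represents the $L^{2^d-1}$-extension of $\CD_d$ evaluated at $f$.
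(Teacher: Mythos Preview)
Your argument is correct and is exactly the unpacking of the paper's one-line justification ``By Lemma~\ref{fact:boundedDd} and density'': you use the $L^{2^d-1}\to L^\infty$ continuity from Lemma~\ref{fact:boundedDd}, the continuity of $\CD_df$ for $f\in\CC(X)$ coming from property~$\prop$, and the density of $\CC(X)$ in $L^{2^d-1}(\mu)$, together with the fact that minimality forces $\mu$ to have full support so that $\CC(X)$ embeds isometrically in $L^\infty(\mu)$. Nothing is missing.
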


\begin{lemma}
\label{fact:factoPd}
Let $(X,\mu,T)$ be a system with property $\prop$, $(Y,T)$ be a 
minimal system, $p\colon X\to Y$ a topological factor map, and $\nu$ be
the image of $\mu$ under $p$.
Then $(Y,T,\nu)$ has property $\prop$.
\end{lemma}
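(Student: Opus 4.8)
Lemma~\ref{fact:factoPd}: The plan is to show that if $(X,\mu,T)$ has property $\prop$ and $p\colon X\to Y$ is a topological factor map with $\nu = p_*\mu$, then $(Y,T,\nu)$ has property $\prop$. The essential point is that on the factor $Y$ the cubic averages~\eqref{eq:cubiclimit} can be pulled back along $p$ to cubic averages on $X$, where convergence is already known to be everywhere and uniform.

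First I would take continuous functions $g_\epsilon$, $\emptyset\neq\epsilon\subset[d]$, on $Y$ and set $f_\epsilon = g_\epsilon\circ p$, which are continuous on $X$. For a fixed $N$, the finite average
$$
A_N^X(x) = \frac 1{N^d}\sum_{0\leq n_1,\dots,n_d<N}\prod_{\substack{\epsilon\subset[d]\\ \epsilon\neq\emptyset}} f_\epsilon(T^{\bn\cdot\epsilon}x)
$$
satisfies, by $T\circ p = p\circ T$ and $f_\epsilon = g_\epsilon\circ p$, the identity $A_N^X(x) = A_N^Y(p(x))$, where $A_N^Y$ is the corresponding finite average on $Y$ formed from the $g_\epsilon$. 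Since $(X,\mu,T)$ has property $\prop$, the sequence $A_N^X$ converges uniformly on $X$; hence $A_N^X$ is uniformly Cauchy, and since $p$ is onto, $\sup_{y\in Y}|A_N^Y(y) - A_M^Y(y)| = \sup_{x\in X}|A_N^X(x) - A_M^X(x)|$, so $A_N^Y$ is uniformly Cauchy on $Y$ and therefore converges everywhere and uniformly. This is exactly property $\prop$ for $(Y,T,\nu)$; the ergodicity of $\nu$ is automatic as the image of the ergodic measure $\mu$. (One should also note $\nu$ is an invariant measure on $Y$, which is immediate since $p$ commutes with $T$.)

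There is essentially no obstacle here: the argument is a direct transfer of uniform convergence through a surjective intertwining map, using only that the cubic average is a continuous-function expression in the orbit and that $f_\epsilon\circ p$ reads off the $Y$-average composed with $p$. The only point requiring a word of care is that ``everywhere'' on $Y$ really is covered — but surjectivity of $p$ handles this, since every $y\in Y$ equals $p(x)$ for some $x$ and $A_N^Y(y) = A_N^X(x)$ converges. Thus no measure-theoretic subtlety (such as the $\CD_d$ being only defined a.e.) enters: everything takes place at the level of honest continuous functions and genuine pointwise limits.
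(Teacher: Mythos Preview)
Your proof is correct and follows essentially the same approach as the paper: pull back the continuous functions on $Y$ through $p$, note that the cubic average on $Y$ composed with $p$ equals the cubic average on $X$, and use surjectivity of $p$ to transfer the uniform convergence from $X$ to $Y$. The paper's proof is more terse (it omits the explicit Cauchy argument and the remark on ergodicity of $\nu$), but the content is the same.
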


\begin{proof}
Let
$f_\epsilon$, $\emptyset\neq\epsilon\subset[d]$, be
continuous functions on $Y$. Then the averages
$$
 \frac 1{N^d}\sum_{0\leq n_1,\dots,n_d<N}
\prod_{\substack{\epsilon\subset[d]\\ \epsilon\neq\emptyset}}
f_\epsilon(T^{\bn\cdot\epsilon }p(x))
$$
converge uniformly on $X$ and thus  the averages~\eqref{eq:cubiclimit} 
converge uniformly on $Y$.
\end{proof}

\section{Using a measure}
\label{sec:final}
In this section, we prove Theorem~\ref{th:converse} which completes the proof of 
Theorem~\ref{theorem:main}: any transitive system 
$(X,T)$ of order $d-1$ is an inverse limit of $(d-1)$-step minimal 
nilsystems. By  Corollary~\ref{cor:two_distal}, $(X,T)$ is distal 
and thus is minimal.
The method we use is completely different from that used in~\cite{HM} 
for $d=3$, and 
proceeds by introducing an invariant measure on $X$.

We start by reducing the proof of Theorem~\ref{th:converse} to 
the following:
\begin{proposition}
\label{prop:iso}
Let $(X,T)$ be a minimal system of order $d-1$, $\mu$ be an invariant 
ergodic measure on $X$, and let $(Y,T)$ be an inverse limit of 
minimal $(d-1)$-step nilsystems with Haar measure $\nu$.  
Let $\Psi\colon (Y,\nu,T)\to(X,\mu,T)$ be a measure theoretic 
isomorphism.
Then $\Psi$ coincides $\nu$-almost everywhere with a topological 
isomorphism.
\end{proposition}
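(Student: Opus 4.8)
The plan is to show that a measure-theoretic isomorphism $\Psi\colon(Y,\nu,T)\to(X,\mu,T)$, where $Y$ is an inverse limit of minimal $(d-1)$-step nilsystems and $X$ is a minimal system of order $d-1$, must agree a.e.\ with a topological isomorphism. The strategy is to build the topological map out of \emph{dual functions}, which behave well under both the topological and the measure-theoretic structures. Concretely, I would first transport the good analytic structure on $Y$ to $X$: since $Y$ has property $\prop$ by Proposition~\ref{fact:uniform}, and $\Psi$ is a measure isomorphism, the dual functions $\CD_d f$ for $f\in\CC(X)$ are (via $\Psi$) equal $\mu$-a.e.\ to continuous functions on $X$. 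The key point is to promote this to genuine continuity and, more importantly, to show that the algebra generated by these dual functions (together with their translates under $T$) separates points of $X$ topologically, so that it is dense in $\CC(X)$ by Stone--Weierstrass.

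The main tool for separation is Lemma~\ref{fact:Ddzero}: because $X$ is a system of order $d-1$, for every $\eta>0$ there is $\delta>0$ so that $\CD_d\one_{B(x,\delta)}$ vanishes $\mu$-a.e.\ off $B(x,\eta)$, while by Lemma~\ref{fact:positive} it is strictly positive $\mu$-a.e.\ on $B(x,\delta)$. Thus the continuous (on $X$, after transporting) functions $\CD_d\one_{B(x,\delta)}$ have small ``support'' in a quantitative sense, and translating by powers of $T$ and using minimality one can localize near any prescribed point. I would use this to produce, for any two distinct points $x_1,x_2\in X$, a continuous function in the closed algebra $\CA$ generated by $\{\CD_d f\circ T^n : f\in\CC(X),\ n\in\Z\}$ that separates them. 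Here one has to be a little careful because the dual functions are a priori only defined a.e.; the point of property $\prop$ is precisely that on $Y$ they are honest continuous functions given by everywhere-uniformly-convergent averages, so on $X$ (pull back via $\Psi$) one gets a well-defined continuous representative. Since $\mu$ has full support (by minimality and invariance, as $X$ is minimal), an a.e.\ equality between two continuous functions is an everywhere equality, which lets these manipulations go through cleanly.

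Once $\CA=\CC(X)$, the measure-theoretic isomorphism $\Psi$ restricted to this algebra is realized topologically: for $f\in\CC(X)$, $\CD_d f\circ\Psi=\CD_d(f\circ\Psi)$ a.e.\ by Lemma~\ref{fact:dualfactor}, and the right-hand side is continuous on $Y$ (again by property $\prop$), so $\Psi$ pulls every element of a dense subalgebra of $\CC(X)$ back to a continuous function on $Y$. By a standard argument (a measurable map between compact metric spaces that pulls continuous functions back to functions a.e.\ equal to continuous ones, with $\CA$ dense and points separated on both sides) $\Psi$ agrees $\nu$-a.e.\ with a continuous map $\Psi_0\colon Y\to X$; running the same argument for $\Psi^{-1}$ gives a continuous inverse $\Psi_0^{-1}\colon X\to Y$, so $\Psi_0$ is a topological conjugacy and $\Psi=\Psi_0$ $\nu$-a.e. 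I would also invoke Lemma~\ref{lem:extendcontinu} if needed to extend a continuous equivariant map defined on a dense invariant subset to all of $Y$, since $X$ is distal and minimal.

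The hard part, I expect, is the separation-of-points step: turning the measure-theoretic vanishing in Lemma~\ref{fact:Ddzero} into a genuine topological separation statement, i.e.\ showing that the dual functions $\CD_d\one_{B(x,\delta)}$ (and their $T$-translates) genuinely ``see'' the topology of $X$ rather than just its measure-theoretic structure. This requires combining minimality of $X$ (to move mass around via $T^n$), full support of $\mu$ (so that a.e.\ statements become everywhere statements for continuous functions), and the continuity of the map $J\colon\QQ\type d_*\to X$ from Lemma~\ref{fact:Ddzero}, to pin down that $\CD_d\one_{B(x,\delta)}$ is, up to controlled error, a genuine bump near $x$. The passage from indicator functions (which are not continuous) to continuous functions requires approximating $\one_{B(x,\delta)}$ from inside and outside by continuous functions and using the continuity of $f\mapsto\CD_d f$ from $L^{2^d-1}(\mu)$ to $\CC(X)$ (Lemma~\ref{fact:continuousDd}); this is where property $\prop$, transported from $Y$ to $X$ via $\Psi$, does the essential work.
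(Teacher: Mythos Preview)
Your plan has a circularity that the paper's argument is specifically designed to avoid. You want to apply Stone--Weierstrass on $X$, which requires the dual functions $\CD_d f$ (for $f\in\CC(X)$) to be honest continuous functions on $X$ that separate points. But $\CD_d f$ is only defined $\mu$-almost everywhere until $(X,\mu,T)$ is known to have property $\prop$; and in the paper, property $\prop$ for $X$ is obtained only \emph{after} a topological factor map $\Phi\colon Y\to X$ has been constructed, via Lemma~\ref{fact:factoPd}. Your attempt to ``transport'' continuity from $Y$ to $X$ does not go through: the identity $(\CD_d f)\circ\Psi=\CD_d(f\circ\Psi)$ together with property $\prop$ on $Y$ gives a continuous function on $Y$, not on $X$. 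To push it back to a continuous function on $X$ you would need $\Psi^{-1}$ (or something equivalent) to be continuous, which is exactly what you are trying to prove. So the separation-of-points step on $X$, as you have set it up, cannot be carried out.

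The paper's route never asks for continuity on $X$ before the topological map exists. The key Lemma~\ref{lem:Phicontinuous} shows instead that for every open $U\subset X$ there is an open $\widetilde U\subset Y$ with $\widetilde U=_\nu\Psi^{-1}(U)$. Lemma~\ref{fact:Ddzero} is used on the $X$ side purely at the $\mu$-a.e.\ level (it needs only that $X$ is of order $d-1$, not $\prop$) to get $\{\CD_d\one_{W_x}>0\}\subset_\mu U$; the passage to genuine open sets happens on the $Y$ side, where $\prop$ makes $\{y:\CD_d(\one_{W_x}\circ\Psi)(y)>0\}$ open. From this one extracts a full-measure invariant subset of $Y$ on which $\Psi$ is continuous, and Lemma~\ref{lem:extendcontinu} (using distality of $X$) extends it to all of $Y$. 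Only then does Lemma~\ref{fact:factoPd} give $\prop$ for $X$, after which the same lemma is applied with the roles of $X$ and $Y$ reversed (using that $Y$ is also of order $d-1$) to build the topological inverse. Your overall two-pass structure and your list of ingredients are right, but the first pass has to be executed on the $Y$ side via the ``preimages of opens are a.e.\ open'' mechanism, not via Stone--Weierstrass on $X$.
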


\begin{proof}[Proof of Theorem~\ref{th:converse} (assuming 
Proposition~\ref{prop:iso})]
By Lemma~\ref{fact:suppotymud}, the measure $\mu\type d$ is 
concentrated on the subset $\QQ\type d$ of $X\type d$.
Since $(X,T)$ is a system of order $d-1$, there exists a continuous map
$J\colon \QQ\type d_*\to X$ such 
that  
$$
x_\emptyset=J(x_\epsilon\colon \epsilon\subset[d],\ 
\epsilon\neq\emptyset)\text{ for every  }\bx\in\QQ\type d
$$
and so this property holds $\mu\type d$-almost everywhere.  (Again, 
$\QQ\type d_{*}$ denotes $\QQ\type d$ without the first coordinate.)  
By Theorem~\ref{fact:structure}, $(X,\mu,T)$ is isomorphic in the 
ergodic theoretic sense to an inverse limit $(Y,\nu,T)$ of 
$(d-1)$-step ergodic nilsystems.
By Proposition~\ref{prop:iso}, $(X,T)$ and $(Y,S)$ are isomorphic in 
the topological sense and we are finished.
\end{proof}

\subsection{Proof of Proposition~\ref{prop:iso}}
\label{subsec:endrpoof}

To prove Proposition~\ref{prop:iso}, we start with a lemma:
\begin{lemma}
\label{lem:Phicontinuous}
Let $(Y,\nu,T)$ be a system with Property $\prop$, $(X,T)$ be a minimal 
system of order $d-1$, $\mu$ be an invariant probability measure on $X$, 
and $\Psi\colon (Y,\nu,T)\to(X,\mu,T)$ be a measure theoretic factor 
map.  Then $\Psi$ agrees $\nu$-almost everywhere with some topological 
factor map.
\end{lemma}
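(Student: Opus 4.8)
The goal is to upgrade a measure-theoretic factor map $\Psi\colon(Y,\nu,T)\to(X,\mu,T)$, where $(Y,\nu,T)$ has property $\prop$ and $(X,T)$ is a minimal system of order $d-1$, into a genuine topological factor map. The obstacle is that a priori $\Psi$ is only defined $\nu$-almost everywhere and is merely measurable, with no continuity and no control near any particular point; I must manufacture continuity out of the dynamical/combinatorial rigidity of $(X,T)$ (the fact that the last coordinate of a parallelepiped is a continuous function of the others) together with the everywhere-uniform convergence of the cubic averages on $Y$.

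\textbf{Step 1: move the dual functions across $\Psi$.} For a continuous function $f$ on $X$, form $g=f\circ\Psi\in L^\infty(\nu)$. By Lemma~\ref{fact:dualfactor}, $\CD_d g=(\CD_d f)\circ\Psi$ as $L^2$-classes. Since $(Y,\nu,T)$ has property $\prop$, by Lemma~\ref{fact:continuousDd} the function $\CD_d g$ is $\nu$-a.e.\ equal to a \emph{continuous} function on $Y$; call this continuous representative $G_f$. Thus for every $f\in\CC(X)$ we obtain a continuous function $G_f$ on $Y$ with $G_f=(\CD_d f)\circ\Psi$ $\nu$-a.e., and by Lemma~\ref{fact:continuousDd} the assignment $f\mapsto G_f$ is continuous from $\CC(X)$ (or even $L^{2^d-1}(\mu)$) to $\CC(Y)$.

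\textbf{Step 2: use Lemma~\ref{fact:Ddzero} to localize.} Fix $\eta>0$. Since $(X,T)$ is a minimal system of order $d-1$, Lemma~\ref{fact:Ddzero} gives $\delta>0$ so that for every $x\in X$, $\CD_d\one_{B(x,\delta)}$ vanishes $\mu$-a.e.\ off $B(x,\eta)$; note also $\CD_d\one_{B(x,\delta)}>0$ $\mu$-a.e.\ on $B(x,\delta)$ by Lemma~\ref{fact:positive}. Approximating $\one_{B(x,\delta)}$ in $L^{2^d-1}(\mu)$ from below and above by continuous functions supported slightly inside/outside the ball, and using the $L^{2^d-1}\to\CC$ continuity of $\CD_d$ on $X$ and of $f\mapsto G_f$ on $Y$, one transfers these two facts to continuous statements on $Y$: there is a continuous function $H_{x,\delta}$ on $Y$ with $H_{x,\delta}=(\CD_d\one_{B(x,\delta)})\circ\Psi$ $\nu$-a.e., hence $H_{x,\delta}(y)=0$ whenever $\Psi(y)\notin B(x,\eta)$ (for $\nu$-a.e.\ $y$, then everywhere by continuity and density of the a.e.\ set), while $H_{x,\delta}(y)>0$ for $y$ in a set of positive measure inside $\Psi^{-1}(B(x,\delta))$.

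\textbf{Step 3: define the continuous factor map and identify it with $\Psi$.} Cover $X$ by finitely many balls $B(x_i,\delta)$; the corresponding continuous functions $H_{x_i,\delta}$ on $Y$ have no common zero (on the full-measure set where they agree with the pulled-back dual functions, at least one is positive by Lemma~\ref{fact:positive}, and this extends everywhere by continuity plus minimality/density arguments). Form the continuous map $\Phi\colon Y\to X$ by taking, say, the $H_{x_i,\delta}$-weighted average of the centers $x_i$ — or more cleanly, run the argument with a refining sequence of $\eta\to 0$ and extract $\Phi(y)$ as the unique point of $X$ lying in $B(x_i,\eta)$ for every $i$ with $H_{x_i,\delta}(y)>0$; the "system of order $d-1$" rigidity forces this intersection to be a single point. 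One checks $\Phi$ is continuous, commutes with $T$ (all the data are $T$-equivariant), and that $\Phi(y)\in\overline{B(\Psi(y),\eta)}$ for $\nu$-a.e.\ $y$; letting $\eta\to0$ gives $\Phi=\Psi$ $\nu$-a.e. Since $\Psi$ is onto $\mu$ and $\mu$ has full support (minimality), $\Phi(Y)$ is dense, hence $\Phi$ is a topological factor map, finishing the lemma. The main obstacle is Step 3: organizing the finite cover and the $\eta\to0$ limit so that the candidate $\Phi$ is genuinely well-defined and continuous — this is exactly where the hypothesis "order $d-1$" (continuity and single-valuedness of the parallelepiped-closing map $J$) does the essential work.
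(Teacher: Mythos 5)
Your Steps 1 and 2 are correct and coincide with the paper's starting point: transfer the dual functions across $\Psi$ via Lemma~\ref{fact:dualfactor}, use property $\prop$ (Lemma~\ref{fact:continuousDd}) to obtain continuous representatives on $Y$, and localize with Lemmas~\ref{fact:Ddzero} and~\ref{fact:positive}. The gap is in Step 3, where you try to define $\Phi$ pointwise at \emph{every} $y\in Y$. First, the claim that the finitely many continuous functions $H_{x_i,\delta}$ have no common zero does not follow from ``at least one is positive on a full-measure set, and this extends everywhere by continuity plus density'': a continuous nonnegative function can be positive $\nu$-almost everywhere and still vanish on a nonempty closed null set, since positivity is an open condition and does not propagate from a dense set to its closure. (The statement is likely true, but establishing it would require something like a pointwise Gowers--Cauchy--Schwarz estimate against the fibre measures $\mu\type{d}_y$, which is delicate because the functions $\one_{B(x_i,\delta)}\circ\Psi$ are not continuous; none of the lemmas you invoke delivers it.) Second, the well-definedness and single-valuedness of $\Phi(y)$ in the limit $\eta\to0$ is not ``forced by the order $d-1$ rigidity'' --- that rigidity has already been spent in Lemma~\ref{fact:Ddzero}; what is actually needed is a finite-intersection-property argument showing that the closed sets $\bigcap_{i\colon H_{x_i,\delta}(y)>0}\overline{B(x_i,\eta)}$ are nonempty and compatible across scales, using density of the good set, and this is not addressed. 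Third, the data are not $T$-equivariant (applying $T$ does not carry balls to balls), so commutation with $T$ must be deduced a posteriori from $\Phi=\Psi$ a.e.\ together with continuity and density, not from equivariance of the construction. Finally, a weighted average of the centers $x_i$ is meaningless in a general compact metric space.

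The paper avoids all of these difficulties by not attempting a pointwise-everywhere construction. Using exactly your Steps 1--2, it shows that for every open $U\subset X$ there is an open $\widetilde U\subset Y$ equal to $\Psi\inv(U)$ up to a $\nu$-null set; running this over a countable basis of $X$ and deleting the $T$-orbits of the exceptional null sets yields an invariant full-measure set $Y_1$ on which $\Psi$ (with the induced topology) is continuous. The extension of $\Psi|_{Y_1}$ to all of $Y$ is then handled abstractly by Lemma~\ref{lem:extendcontinu}, which uses the distality of $(X,T)$ to show that the closure of the graph is still a graph. That extension lemma is precisely the device that takes care of the ``bad'' points on which your direct construction gets stuck; you would either need to import it or supply the missing compactness and positivity arguments above.
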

\begin{proof}

We can assume that there exists a Borel invariant subset $Y_0$ of full 
measure and  that 
$\Psi$ is a Borel map from $Y_0$ to $X$, mapping the measure $\nu$ to 
the measure $\mu$ and such that  $\Psi(Tx)=T\Psi(x)$ for 
every $x\in Y_0$.

We claim that:
\begin{claim}
\label{fact:openopen}
For every open subset $U$ of $X$, there exists an open subset $\widetilde 
U$ of $Y$ equal to $\Psi\inv(U)$ up to a $\nu$-negligible set.
\end{claim}

To see  this, if $(X,\mu)$ is a probability space and $A,B\subset X$, write
$A\subset_\mu B$ if $\mu(A\setminus B)=0$. The notations $A\supset_\mu 
B$ and $A=_\mu B$ are defined similarly.

Assume that $U\neq\emptyset$, as
otherwise the claim holds trivially. Let $x\in U$.  
By Lemma~\ref{fact:Ddzero} there exists an 
open subset $W_x$ containing $x$ and included in $U$ such that  the 
set
$$
 U_x:=\{x\in X\colon\CD_d\one_{W_x}>0\}
$$
satisfies 
\begin{equation}
\label{eq:UxU}
 U_x\subset_\mu U\ .
\end{equation}

Define
$$
 \widetilde U_x= \bigl\{y\in Y\colon \CD_d(\one_{W_x}\circ\Psi)(y)>0\bigr\}\  .
$$
By Lemmas~\ref{fact:continuousDd} and~\ref{fact:dualfactor}, $\widetilde U_x$ is
an open subset of $Y$ and 
$$
 \widetilde U_x=_\nu \Psi\inv(U_x)\ .
$$

We have that $U$ is the union of the open sets $W_x$ for $x\in U$.  Since 
$U$ is $\sigma$-compact, there exists a countable subset $\Gamma$ of $U$ 
such that the union $\bigcup_{x\in\Gamma}U_x$ is equal 
to $U$. Define
$$
 \widetilde U=\bigcup_{x\in\Gamma}\widetilde U_x\ .
$$
Then
$$
 \widetilde U=_\nu \Psi\inv\bigl(\bigcup_{x\in\Gamma}U_x\bigr)\ .
$$
By~\eqref{eq:UxU}, $\widetilde U\subset_\nu \Psi\inv(U)$.
By Lemma~\ref{fact:positive}, for every $x\in\Gamma$ we have that 
$ U_x\supset_\mu W_x$.  Thus $\widetilde U_x\supset_\nu \Psi\inv(W_x)$ 
and
$$
 \widetilde U\supset_\nu\bigcup_{x\in\Gamma}\Psi\inv(W_x)=
\Psi\inv\bigl(\bigcup_{x\in\Gamma}W_x\bigr)=\Psi\inv(U)\ .
$$
This completes the proof of  the claim.  

\begin{claim}
\label{fact:continuousalmost}
There exists an invariant subset $Y_1$ of full measure such that the 
restriction of $\Psi$ to $Y_1$ 
(endowed with the induced topology) is continuous.
\end{claim}

To prove this claim, we let $(U_j\colon j\geq 1)$ be a countable basis for the topology of $X$.

For every $j\geq 1$, by Claim~\ref{fact:openopen} there exists an open 
subset $\widetilde U_j$ of $Y$ such that 
the symmetric difference
$$
 Z_j:= \widetilde U_j \ \Delta \ \Psi\inv(U_j)
$$
has zero $\nu$-measure. Define
$$
Y_1=Y_0\setminus  \bigcup_{n\in\Z}\bigcup_{j\geq 1} T^{n}Z_j\ .
$$
(Recall that $Y_0$ is the invariant subset of $Y$ where the map $\Phi$ 
is defined.)

For every $j\geq 1$, $\Psi\inv(U_j)\cap Y_1=\widetilde U_j\cap Y_1$. 
Every nonempty open subset $U$ of $X$ is the union of some of the 
sets $U_j$, and if $\widetilde U$ is the union of the corresponding sets 
$\widetilde U_j$ we have that $\Psi\inv(U)\cap Y_1=\widetilde U\cap Y_1$. 
This proves the claim.

We combine these results to complete the proof of 
Lemma~\ref{lem:Phicontinuous}.  Since 
$(Y,T)$ is minimal, the measure $\nu$ has full support in $Y$ and 
the subset $Y_1$ given by Claim~\ref{fact:continuousalmost} is dense 
in $Y$. Since $(X,T)$ is distal, the result now follows from 
Lemma~\ref{lem:extendcontinu}.

\end{proof}

Using this, we return to the proposition:
\begin{proof}[Proof of Proposition~\ref{prop:iso}]
There exist a Borel invariant subset $Y_0$ of $Y$ of full measure, a 
Borel invariant subset $X_0$ of full measure, and a Borel bijection 
$\Psi\colon Y_0\to X_0$ with Borel inverse, mapping $\nu$ to $\mu$ 
and commuting with the transformations.

Recall that $(X,T)$ is a system of order $d-1$ and that $(Y,\nu,S)$ 
satisfies property $\prop$.
By Lemma~\ref{lem:Phicontinuous}, there exist a subset $Y_1$ of $Y_0$ 
of full measure and a topological factor map $\Phi\colon Y\to X$
that coincides with $\Psi$ on $Y_1$.

By Lemma~\ref{fact:factoPd}, $(X,\mu,T)$ has property $\prop$. 
Recall that $(Y,T)$ is a system of order $d-1$. Using 
Lemma~\ref{lem:Phicontinuous} again, there exist a subset $X_1$ of 
$X_0$ of full measure and a topological factor map $\Theta\colon X\to 
Y$ that coincides with $\Psi\inv$ on $X_1$.

The subset $Y_1\cap \Psi\inv(X_1)$ has full measure in $Y$ and for 
$y$ in this set, we have $\Theta\circ\Phi(y)=y$. Since the measure 
$\nu$ has full support, this equality holds everywhere and 
$\Theta\circ\Phi=\id_Y$. 
By the same argument, $\Phi\circ\Theta=\id_X$ and we are done.
\end{proof}

\appendix
\section{Rigidity properties of inverse limits of nilsystems}
\label{sec:inutile}

In this Section, we assume that 
$d>1$ is an integer and establish some 
``rigidity''  properties of inverse limits of $(d-1)$-step 
nilsystems, meaning some continuity properties.
 
A property of nilsystems  of this 
type (Theorem~\ref{fact:factonil}) was 
used in Section~\ref{subsec:invelimit} in the discussion on the 
definition of inverse limits, and so the reader may be concerned 
about a possible vicious circle in the argument.  
The way to avoid this is to first carry out the results 
in this section for nilsystems, and not 
inverse limits of nilsystems.  This suffices to establish 
the property needed in Section~\ref{subsec:invelimit}.
Then it is easy to check that the same proofs 
extend to the general case. 

Throughout the remainder of this section, we assume 
that $(X,T)$ is an inverse limit of minimal 
$(d-1)$-step nilsystems and that 
$\mu$ is the invariant measure of this system.
We recall that $(X,T)$ is a system of order $d-1$ and has property
$\prop$ of continuity of dual functions (Proposition~\ref{fact:uniform}).
We first give a slight improvement of Lemma~\ref{fact:Ddzero}, 
maintaining the same notation:
\begin{lemma}
\label{fact:Ddzero2}
For every $x\in X$ and every neighborhood $U$ of $X$, there exists a 
neighborhood $V$ of $x$ such that
if $f$ is a continuous function on $X$ whose support 
lies in $V$, then the support of the
function $\CD_df$ is contained in $U$.
\end{lemma}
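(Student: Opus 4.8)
The plan is to upgrade Lemma~\ref{fact:Ddzero} from a statement about balls and the single function $\one_{B(x,\delta)}$ to a statement about arbitrary continuous functions supported near $x$, using the monotonicity of $\CD_d$ and property $\prop$. First I would fix $x\in X$ and a neighborhood $U$ of $x$, and choose $\eta>0$ so that $B(x,2\eta)\subset U$. Applying Lemma~\ref{fact:Ddzero} to this $\eta$ gives a $\delta>0$ such that $\CD_d\one_{B(x',\delta)}=0$ $\mu$-almost everywhere off $B(x',\eta)$ for every $x'\in X$; I will take $V=B(x,\delta)$ shrunk slightly so that $V\subset B(x',\delta)$ for all $x'$ in some small ball, but in fact it suffices to use $x'=x$ and a second application: choose $V=B(x,\delta')$ with $\delta'\le\delta$ small enough that $B(z,\delta')\subset B(x,\delta)$ for all $z\in V$, hence also $B(z,\eta)\subset B(x,2\eta)\subset U$.

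Next, given a continuous $f$ with support in $V$, I would bound $|f|\le \norm f_\infty\,\one_{\overline V}\le \norm f_\infty\,\one_{B(x,\delta)}$ pointwise (the closure of $V$ is contained in $B(x,\delta)$ by the choice of $\delta'$). By the remark following Lemma~\ref{fact:boundedDd} that $0\le g\le h$ implies $0\le\CD_d g\le\CD_d h$, applied to real and imaginary, positive and negative parts of $f$, one gets $|\CD_d f|\le \norm f_\infty\,\CD_d\one_{B(x,\delta)}$ as elements of $L^2(\mu)$. Since the right-hand side vanishes $\mu$-almost everywhere on $X\setminus B(x,\eta)\supset X\setminus U$, we conclude $\CD_d f=0$ $\mu$-almost everywhere on $X\setminus U$.

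Finally, to pass from ``zero almost everywhere off $U$'' to ``support contained in $U$'', I invoke property $\prop$: by Proposition~\ref{fact:uniform} and Lemma~\ref{fact:continuousDd}, for $f$ continuous the function $\CD_d f$ is (equal to) a genuine continuous function on $X$. A continuous function that vanishes $\mu$-almost everywhere on the open set $X\setminus U$ vanishes identically there, because $\mu$ has full support in the minimal system $(X,T)$ (so $X\setminus U$ meets the support of $\mu$ in a dense subset of itself). Hence $\{\CD_d f\neq 0\}\subset U$, and taking closures, the support of $\CD_d f$ is contained in $\overline U$; restating with $U$ replaced at the outset by a slightly larger open set whose closure is still the desired neighborhood gives the claim as stated. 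The only mildly delicate point is keeping the two radii $(\eta,\delta)$ and the auxiliary shrinking $\delta'$ straight so that both inclusions $B(z,\delta')\subset B(x,\delta)$ and $B(z,\eta)\subset U$ hold simultaneously; everything else is monotonicity of $\CD_d$ plus continuity from property $\prop$.
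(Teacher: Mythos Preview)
Your approach is the paper's approach, but you have overcomplicated it and introduced one small slip. The auxiliary radius $\delta'$ is unnecessary: the paper simply takes $V=B(x,\delta)$, and since $\operatorname{supp} f\subset V$ already gives $|f|\le\norm f_\infty\,\one_{B(x,\delta)}$, no further shrinking is needed. More importantly, your appeal to ``real and imaginary, positive and negative parts'' is misguided because $\CD_d$ is \emph{not linear} (it is a product over $2^d-1$ copies of $f$), so such a decomposition does not control $\CD_d f$; the correct bound is the direct pointwise inequality $|\CD_d f|\le\CD_d|f|\le\norm f_\infty^{2^d-1}\,\CD_d\one_{B(x,\delta)}$, which follows from $|\prod f|\le\prod|f|$ termwise in the averages (the paper normalizes to $\norm f_\infty\le 1$ to avoid the exponent). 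Finally, the closure fix at the end is not needed: you already know $\CD_d f=0$ $\mu$-a.e.\ off $B(x,\eta)$, so continuity plus full support of $\mu$ gives $\CD_d f=0$ everywhere off $\bar B(x,\eta)\subset B(x,2\eta)\subset U$, and hence $\operatorname{supp}\CD_d f\subset U$ directly.
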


\begin{proof}
Pick $\eta>0$ such that the ball $B(x,2\eta)$ is 
contained in $U$. Let $\delta$ be as in 
Lemma~\ref{fact:Ddzero} and let $V=B(x,\delta)$. 

Assume that $f\in\CC(X)$ has support contained in $V$ and 
assume that $|f|\leq 1$.  
We have that $|\CD_df|\leq\CD_d|f|\leq 
\CD_d\one_{B(x,\delta)}$. 
By the choice of $\delta$, $\CD_df$  is
equal to zero almost 
everywhere on the complement of $B(x,\eta)$.

Since the function $\CD_d f$ is continuous and since the 
measure $\mu$ has full support in $X$, $\CD_df$ 
vanishes everywhere outside the closed ball $\bar B(x,\eta)$, which is 
included in $U$.
\end{proof}

\begin{lemma}
\label{fact:Dfnonzero}
If $f$ is a nonnegative continuous function on $X$,
then $\CD_df(x)>0$ for every $x\in X$ such that $f(x)>0$.
\end{lemma}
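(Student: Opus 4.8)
The plan is to deduce Lemma~\ref{fact:Dfnonzero} from the positivity statement for indicator functions, Lemma~\ref{fact:positive}, together with the continuity of dual functions that holds under property $\prop$. First I would fix a nonnegative $f\in\CC(X)$ and a point $x_0$ with $f(x_0)>0$. By continuity there is $c>0$ and a neighborhood $W$ of $x_0$ on which $f\geq c$; hence $f\geq c\,\one_W$ pointwise, and since the dual operator $\CD_d$ is monotone (the remark after Lemma~\ref{fact:boundedDd}: if $0\leq g\leq h$ then $0\leq\CD_dg\leq\CD_dh$) and positively homogeneous of degree $2^d-1$, we get $\CD_df\geq c^{2^d-1}\CD_d\one_W$ as elements of $L^2(\mu)$, hence $\mu$-almost everywhere.

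Next I would invoke Lemma~\ref{fact:positive}: since $W$ is a nonempty open set and $\mu$ has full support (the system is minimal), $\mu(W)>0$ and $\CD_d\one_W>0$ for $\mu$-almost every point of $W$. In particular $\CD_d\one_W$ is not the zero element of $L^2(\mu)$, so $\int_X \CD_d\one_W\,d\mu>0$. Now I want to upgrade this from ``positive somewhere'' to ``positive at the specific point $x_0$'', and this is where property $\prop$ enters: by Lemma~\ref{fact:continuousDd}, $\CD_d f$ and $\CD_d\one_W$ agree $\mu$-almost everywhere with genuinely continuous functions on $X$, and the inequality $\CD_df\geq c^{2^d-1}\CD_d\one_W$, holding $\mu$-a.e. between two continuous functions on a space where $\mu$ has full support, holds everywhere. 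So it suffices to show the continuous version of $\CD_d\one_W$ is strictly positive at $x_0$.

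For that last point I would argue: the continuous function $\CD_d\one_W$ is $\geq 0$ everywhere (again because $\one_W\geq 0$ and by continuity of the approximating averages, or by density from the a.e.\ bound $\CD_d\one_W\geq 0$), it is not identically zero, and its zero set is closed and invariant — because $\CD_d\one_W$ intertwines with $T$ up to the shift in $W$; more cleanly, one can observe that for the invariant measure $\mu$ the set $\{x:\CD_d\one_W(x)>0\}$ is open and, by Lemma~\ref{fact:positive}, has positive measure, so by minimality its orbit is all of $X$, and then invariance-type properties of the dual function force it to be nonvanishing. Actually the cleanest route: apply the already-proved Lemma~\ref{fact:Ddzero2} contrapositively is not quite it; instead I would directly use that $\{\CD_d\one_W>0\}$ is a nonempty open set whose complement $\{\CD_d\one_W=0\}$ is closed, and show $T$-invariance of the latter using $\CD_d\one_W\circ T^{-1}$ being comparable to $\CD_d\one_{TW}$ plus a covering argument — so by minimality the complement is empty, i.e.\ $\CD_d\one_W>0$ everywhere, in particular at $x_0$.

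The main obstacle I expect is exactly this passage from ``positive a.e.'' to ``positive everywhere'': Lemma~\ref{fact:positive} only gives positivity $\mu$-a.e.\ on $W$, and one genuinely needs the continuity furnished by property $\prop$ together with a minimality/invariance argument to spread positivity to every point. If an invariance argument for the zero set proves awkward, an alternative is to note $\CD_d\one_W\geq \CD_d\one_{W'}$ for a smaller ball $W'\subset W$ and use Lemma~\ref{fact:Ddzero2} (which says dual functions of small bumps have small support) in reverse — but the direct minimality argument on the open set $\{\CD_d\one_W>0\}$, which must meet $W$ and be invariant under the semigroup generated by the relevant translations, is the natural one and I would write it that way.
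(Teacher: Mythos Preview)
Your reduction to $\CD_d\one_W$ via monotonicity is fine, and you have correctly identified the crux: passing from ``$\CD_d\one_W>0$ $\mu$-a.e.\ on $W$'' (Lemma~\ref{fact:positive}) together with continuity to ``$\CD_d\one_W(x_0)>0$''. Continuity plus full support only yields $\CD_d\one_W\geq 0$ everywhere; the zero set could be a closed nowhere-dense set containing $x_0$. The invariance argument you sketch does not go through: the dual operator satisfies $(\CD_d g)\circ T=\CD_d(g\circ T)$, so $(\CD_d\one_W)\circ T=\CD_d\one_{T^{-1}W}$, not $\CD_d\one_W$. Thus the zero set of $\CD_d\one_W$ is carried by $T$ to the zero set of $\CD_d\one_{T^{-1}W}$, a different function, and there is no closed $T$-invariant set on which to invoke minimality. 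The variants you mention (covering by translates of $W$, comparing with $\CD_d\one_{TW}$) run into the same problem: you end up controlling the dual functions of different sets at different points, never the single function $\CD_d\one_W$ at the specific point $x_0$.

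The paper avoids this obstacle by working pointwise from the start. Property $\prop$ gives, for each $x\in X$, a probability measure $\mu\type d_x$ on $X\type d_*$ such that $\CD_df(x)=\int\prod_{\epsilon\neq\emptyset}f(y_\epsilon)\,d\mu\type d_x(\by_*)$. The measure $\delta_x\times\mu\type d_x$ is invariant under the face group $\CF\type d$ and supported on the closed orbit $\K\type d(x)$ of $x\type d$; by distality this action is minimal, so the support of $\mu\type d_x$ is all of the projection of $\K\type d(x)$, and in particular contains the diagonal point $x\type d_*$. If $f(x)>0$ the integrand is positive on a neighborhood of this support point, hence the integral is strictly positive. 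The minimality being used is that of $(\K\type d(x),\CF\type d)$, not of $(X,T)$, and this is what lets one pin down positivity at the specific point $x$.
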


\begin{proof}
It follows immediately from property $\prop$ 
that  for every $x\in X$, 
there exists a probability measure 
$\mu\type d_x$ on $X\type d_*$ such that 
$$
\frac 1{N^d}\sum_{0\leq n_1,\dots,n_d<N}
\prod_{\substack{\epsilon\subset[d]\\ \epsilon\neq\emptyset}}
f_\epsilon(T^{\bn\cdot\epsilon }x)
\to
\int \prod_{\substack{\epsilon\subset[d]\\ \epsilon\neq\emptyset}}
f_\epsilon(y_\epsilon)\,d\mu\type d_x(\by_*)
$$
as $N\to+\infty$ for any continuous functions $f_\epsilon$, $\emptyset\neq\epsilon\subset[d]$, on $X$.

By construction, the measure $\delta_x\times\mu\type d_x$ is concentrated on the closed orbit $\K\type d(x)$ of the point 
$x\type d\in X\type d$ under the group of face transformations 
$\CF \type d$, and is invariant under these transformations. 
Since $(X,T)$ is distal, the action of these transformations on 
$\K\type d(x)$ is minimal and thus the topological support of the 
measure $\delta_x\times\mu\type d_x$ is equal to $\K\type d(x)$.
Therefore, the point $x_*\type d\in X\type d_*$ belongs to the 
topological support of the measure $\mu\type d_x$. 

If $f$ is a nonnegative continuous function on $X$  with 
$f(x)>0$ then, 
$$
\CD_df(x)=\int 
\prod_{\substack{\epsilon\subset[d]\\ \epsilon\neq\emptyset}}
f(y_\epsilon)\,d\mu\type d_x(\by_*)>0 \ ,
$$
because the function in the integral is positive at the point 
$x_*\type d$ which belongs to the support of the measure $\mu\type 
d_x$.
\end{proof}

\begin{lemma}
\label{fact:Dddense}
The algebra of functions spanned by $\{\CD_d f\colon f\in 
\CC(X)\}$ is dense in $\CC(X)$ under the uniform norm.
\end{lemma}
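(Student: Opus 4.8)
The plan is to show that the algebra $\CA$ generated by $\{\CD_d f : f \in \CC(X)\}$ is a subalgebra of $\CC(X)$ that separates points and contains the constants, so that the conclusion follows from the Stone--Weierstrass theorem. Since $\CD_d(\one_X) = 1$ (the constant function $1$ is fixed by the cube averages, as each factor $f(T^{\bn\cdot\epsilon}x)$ equals $1$), the algebra $\CA$ contains the constants. That $\CA$ is a subalgebra closed under uniform limits is immediate from its definition (it is by construction the closed algebra generated by the dual functions). The only real content is the separation of points.

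To separate points, suppose $x, y \in X$ with $x \neq y$. First I would pick disjoint open neighborhoods $U \ni x$ and $U' \ni y$. By Lemma \ref{fact:Ddzero2}, there is a neighborhood $V$ of $x$ with $V \subset U$ such that any continuous $f \geq 0$ supported in $V$ has $\CD_d f$ supported in $U$; in particular such a $\CD_d f$ vanishes at $y$. Now choose a nonnegative continuous function $f$ with $f(x) > 0$ and support contained in $V$. By Lemma \ref{fact:Dfnonzero}, $\CD_d f(x) > 0$, while by the choice of $V$ we have $\CD_d f(y) = 0$. Hence $\CD_d f$ is an element of $\CA$ with $\CD_d f(x) \neq \CD_d f(y)$, so $\CA$ separates points. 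By the Stone--Weierstrass theorem (using that $\CC(X)$ here consists of complex-valued functions, so one also needs $\CA$ closed under complex conjugation, which holds since $\overline{\CD_d f} = \CD_d \overline f$ from the defining averages), $\CA = \CC(X)$.

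The main obstacle is simply making sure the two preliminary lemmas are applied correctly: Lemma \ref{fact:Dfnonzero} gives strict positivity of $\CD_d f$ at points where $f$ is positive, and Lemma \ref{fact:Ddzero2} localizes the support of $\CD_d f$; together these pin down a dual function that is positive at $x$ and zero at $y$. There is a small point to check regarding complex conjugation for the complex Stone--Weierstrass theorem, but this is routine given that conjugation commutes with the pointwise products and averages defining $\CD_d$.
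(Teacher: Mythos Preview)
Your proof is correct and follows essentially the same approach as the paper: apply Stone--Weierstrass after noting that $\CD_d 1 = 1$ gives the constants and that Lemmas~\ref{fact:Ddzero2} and~\ref{fact:Dfnonzero} together furnish, for distinct $x,y$, a continuous $f$ with $\CD_d f(x)\neq \CD_d f(y)$. The paper's proof is terser and does not spell out the conjugation check, but the argument is the same.
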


\begin{proof} 
By Lemmas~\ref{fact:Ddzero2} and~\ref{fact:Dfnonzero}, for 
distinct $x,y\in X$, there exists a continuous function $f$ on $X$ 
with $\CD_df(x)\neq\CD_df(y)$. Recall that $\CD_df$ is a continuous 
function on $X$.  Noting that $\CD_d1=1$, the 
statement follows from the Stone-Weierstrass Theorem.
\end{proof}

\begin{theorem}
\label{fact:factonil}
Let $p\colon (X,\mu,T)\to(X',\mu',T')$ be a measure theoretic 
factor map between inverse limits of $(d-1)$-step 
ergodic nilsystems. Then the factor map $p\colon X\to X'$ is   
equal almost everywhere to a topological factor map.
\end{theorem}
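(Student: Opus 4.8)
The plan is to exploit the rigidity of dual functions (Lemmas~\ref{fact:Ddzero2}, \ref{fact:Dfnonzero}, \ref{fact:Dddense}) to promote the measure-theoretic factor map $p$ to a topological one, in the same spirit as the proof of Lemma~\ref{lem:Phicontinuous}. First I would record that both $(X,\mu,T)$ and $(X',\mu',T')$, being inverse limits of minimal $(d-1)$-step nilsystems, are systems of order $d-1$ with property $\prop$ (Propositions~\ref{prop:Qnil} and~\ref{fact:uniform}), and that $\mu$, $\mu'$ have full support since the systems are minimal. In particular every measure-theoretic factor map here is in fact a measure-theoretic isomorphism onto its image only if $p$ is invertible, which we do not assume; but the one-sided argument of Lemma~\ref{lem:Phicontinuous} already applies: since $(X',T')$ is a system of order $d-1$, $\mu$ an invariant measure on $X$ with property $\prop$, and $p\colon(X,\mu,T)\to(X',\mu',T')$ a measure-theoretic factor map, Lemma~\ref{lem:Phicontinuous} (with the roles $Y=X$, $X=X'$, $\Psi=p$) gives directly that $p$ agrees $\mu$-almost everywhere with a topological factor map $X\to X'$.

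So the heart of the matter is simply to check that the hypotheses of Lemma~\ref{lem:Phicontinuous} are met, and the theorem follows at once. Concretely, I would state: $(X,\mu,T)$ has property $\prop$ by Proposition~\ref{fact:uniform}; $(X',T')$ is a minimal system of order $d-1$ by Proposition~\ref{prop:Qnil}; $\mu'$ is an invariant probability measure on $X'$ (indeed the unique one, by unique ergodicity of inverse limits of minimal nilsystems); and $p$ is a measure-theoretic factor map mapping $\mu$ to $\mu'$. These are exactly the hypotheses of Lemma~\ref{lem:Phicontinuous}, whose conclusion is that $p$ coincides $\mu$-a.e.\ with a topological factor map $X\to X'$. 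This is the assertion of the theorem.

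The only subtlety — and the step I would flag as deserving a sentence of care rather than real difficulty — is the appeal to the results of this appendix \emph{for inverse limits} rather than for single nilsystems, given the remark at the start of the appendix about a potential vicious circle. I would point out explicitly that Lemma~\ref{lem:Phicontinuous} and its ingredients (Lemmas~\ref{fact:Ddzero}, \ref{fact:continuousDd}, \ref{fact:dualfactor}, \ref{fact:positive}, \ref{fact:Ddzero2}, \ref{fact:Dfnonzero}) are first established for $(d-1)$-step nilsystems themselves, where property $\prop$ comes from Corollary~5.2 of~\cite{HK3} with no reference to inverse limits; this already suffices to prove Theorem~\ref{fact:factonil} in the case where $(X,T)$ and $(X',T')$ are genuine nilsystems, which is all that is needed in Section~\ref{subsec:invelimit}. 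The extension to inverse limits is then routine: every continuous function on an inverse limit is a uniform limit of functions pulled back from finite-level nilsystem factors, so property $\prop$ and the dual-function estimates pass to the limit by density, and the proof of Lemma~\ref{lem:Phicontinuous} goes through verbatim. I expect no genuine obstacle beyond marshalling these citations in the right logical order.
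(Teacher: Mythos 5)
Your reduction is correct: the hypotheses of Lemma~\ref{lem:Phicontinuous} (source with property $\prop$ via Proposition~\ref{fact:uniform}, target a minimal system of order $d-1$ via Proposition~\ref{prop:Qnil}, with its unique --- hence ergodic --- invariant measure) are indeed satisfied with $Y=X$, the target playing the role of the lemma's $X$, and $\Psi=p$, and the conclusion of that lemma is literally the assertion of Theorem~\ref{fact:factonil}. You also handle the circularity caveat appropriately by noting that the nilsystem case, which is all that Section~\ref{subsec:invelimit} needs, uses no appendix material. But this is a genuinely different route from the paper's. The paper does not invoke Lemma~\ref{lem:Phicontinuous} at all; instead it argues directly inside the appendix: it takes a countable uniformly dense family $\CA\subset\CC(X')$, uses Lemma~\ref{fact:Dddense} (which rests on the point-separation Lemmas~\ref{fact:Ddzero2} and~\ref{fact:Dfnonzero} plus Stone--Weierstrass) to see that $\{\CD_d f\colon f\in\CA\}$ is dense in $\CC(X')$, uses Lemmas~\ref{fact:dualfactor} and~\ref{fact:continuousDd} to show each $(\CD_d f)\circ p$ agrees on a fixed full-measure set $X_0$ with a continuous function, and then extends by density to obtain a unital algebra homomorphism $\kappa\colon\CC(X')\to\CC(X)$ intertwining the transformations; the continuous map $p'$ dual to $\kappa$ is the desired topological factor map. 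Your route is shorter given the earlier lemma and is in one respect more economical: it requires property $\prop$ only on the source and asks of the target only that it be a minimal system of order $d-1$ with an ergodic invariant measure, so Lemmas~\ref{fact:Ddzero2}, \ref{fact:Dfnonzero}, and~\ref{fact:Dddense} are not needed for this theorem (though the paper needs them anyway for Theorem~\ref{fact:acionnil}). What the paper's self-contained argument buys is independence from the machinery of Section~\ref{sec:final} --- in particular from the open-preimage claim and the distal extension Lemma~\ref{lem:extendcontinu} --- and an algebra-homomorphism formulation that is reused immediately afterward. One stylistic remark: your sentence about measure-theoretic isomorphisms onto the image is confusing and unnecessary, since, as you yourself then say, Lemma~\ref{lem:Phicontinuous} requires only a factor map; you should simply delete it.
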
 

\begin{proof}
Let $\CA$ be a countable subset of $\CC(X')$ that is dense under 
the uniform norm. 
By  Lemmas~\ref{fact:continuousDd} and~\ref{fact:Dddense},
$\{\CD_df\colon f\in\CA\}$ is included in $\CC(X')$ and is 
dense in this algebra.

By Lemma~\ref{fact:dualfactor}, 
for every $f\in\CA$ we have that $\CD_df\circ 
p=\CD_d(f\circ p)$ almost everywhere.
By Lemma~\ref{fact:continuousDd}, $\CD_d(f\circ 
p)$ is $\mu$-almost everywhere equal 
to a continuous function on $X$. 
Therefore, there exists $X_0\subset X$ of full measure such 
that for every $f\in\CA$, the function $(\CD_df)\circ p$ coincides on 
$X_0$ with a continuous function on $X$.  
The same property holds for 
every function belonging to the algebra spanned by $\CA$. 
Since $X_0$ is dense in $X$, by density the same property 
holds for every continuous function on $X$. 

This defines a homomorphism of algebras $\kappa\colon 
\CC(X')\to\CC(X)$ with $\kappa f(x)=f(p(x))$ for every $x\in X_0$ and 
every $f\in\CC(X')$, and $\kappa$ commutes with the 
transformations $T$ and $T'$.  
Thus there exists a continuous map $p'\colon X\to X'$ 
such that $\kappa f=f\circ p'$ for all $f\in\CC(X')$. 
\end{proof}

\begin{theorem}
\label{fact:acionnil} Let $(X,T,\mu)$ be an ergodic inverse limit of 
$(d-1)$-step nilsystems, $G$ be a Polish group, 
and $(g,x)\mapsto g\cdot 
x$ be a Borel action of $G$ on $X$ by measure preserving transformations commuting with $T$.
There exists a continuous action $(g,x)\mapsto g*x$ of $G$ on 
$X$, commuting with $T$, such 
that for every $g\in G$, 
$g*x=g\cdot x$ for $\mu$-almost every $x\in X$.
\end{theorem}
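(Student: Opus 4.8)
The plan is to imitate the proof of Theorem~\ref{fact:factonil}, but now treating the action of $G$ rather than a single factor map. Fix a countable dense subset $\CA$ of $\CC(X)$ under the uniform norm. For each $g\in G$, the map $x\mapsto g\cdot x$ is a measure preserving transformation commuting with $T$, and precomposition with it sends $\CC(X)$ into $L^\infty(\mu)$. The key point is that the dual operator $\CD_d$ has a smoothing effect: by Lemma~\ref{fact:dualfactor} (applied with the measure preserving transformation $x\mapsto g\cdot x$ in the role of a factor map) one has $(\CD_d f)(g\cdot x)=\CD_d(f(g\,\cdot\,))(x)$ for $\mu$-almost every $x$, and by Lemma~\ref{fact:continuousDd} the right-hand side agrees $\mu$-almost everywhere with a continuous function on $X$. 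Thus for each $g$ and each $f\in\CA$, the function $x\mapsto (\CD_d f)(g\cdot x)$ is $\mu$-almost everywhere equal to a genuine continuous function.

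Next I would use Fubini and the Borel measurability of the action to obtain, for each $f\in\CA$, a single Borel map $(g,x)\mapsto (g*f)(x)$ on $G\times X$ that is continuous in $x$ for every $g$ and such that $(g*f)(x)=(\CD_d f)(g\cdot x)$ for $(\text{Haar}\times\mu)$-almost every $(g,x)$; here one picks for each $g$ the continuous representative and checks joint measurability using separability of $\CC(X)$. Since the collection $\{\CD_d f : f\in\CA\}$ is, by Lemma~\ref{fact:Dddense}, dense in the algebra $\CC(X)$, and since the assignment $h\mapsto (g*h)$ respects the algebra operations and the uniform norm (the $\CD_d f$ separate points of $X$ by Lemmas~\ref{fact:Ddzero2} and~\ref{fact:Dfnonzero}, exactly as in the proof of Theorem~\ref{fact:factonil}), for every $g\in G$ this defines an isometric algebra homomorphism $\kappa_g\colon\CC(X)\to\CC(X)$. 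By Gelfand duality $\kappa_g$ is induced by a continuous map $x\mapsto g*x$ of $X$ to itself, and $\kappa_g$ commutes with the $T$-action, so $g*x$ commutes with $T$.

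It remains to check that $(g,x)\mapsto g*x$ is a continuous action agreeing almost everywhere with the given one. Agreement almost everywhere: for fixed $g$, $(\CD_d f)(g*x)=(\kappa_g\CD_d f)(x)=(\CD_d f)(g\cdot x)$ for a.e.\ $x$ and all $f\in\CA$; since the $\CD_d f$ separate points and $X$ is compact metric, $g*x=g\cdot x$ for $\mu$-a.e.\ $x$. The homomorphism property $(g_1g_2)*x=g_1*(g_2*x)$: both sides are continuous in $x$ and agree with the given action $\mu$-a.e., hence agree everywhere since $\mu$ has full support (minimality of $(X,T)$); similarly $e*x=x$ and $(g*x)$ depends measurably, hence — being a homomorphism into the Polish group of homeomorphisms of $X$ equipped with its natural topology, or via the separate-continuity-plus-Polish argument — continuously on $g$.

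\medskip

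\noindent\textbf{Main obstacle.} The delicate step is the joint measurability and the passage from the almost-everywhere statements to the everywhere statements: for each $g$ the continuous representative of $x\mapsto(\CD_d f)(g\cdot x)$ is unique (full support of $\mu$), but one must verify that $g\mapsto\kappa_g$ is measurable and that the cocycle identity, which a priori holds only on a $g$-dependent conull set, upgrades to hold for \emph{all} $g$ and \emph{all} $x$. Here the combination of (i) continuity of all relevant functions in $x$, (ii) full support of $\mu$, and (iii) a Fubini argument to find, for $(\text{Haar}\times\text{Haar})$-a.e.\ $(g_1,g_2)$, a common conull set of $x$ where everything is consistent, does the job; continuity in $x$ then forces the identities to hold for all $x$, and a further density/continuity argument in $(g_1,g_2)$ finishes it. Establishing the continuity of $g\mapsto g*x$ (as opposed to mere measurability) is the other point requiring care, and is handled by the standard fact that a Borel homomorphism from a Polish group to the Polish group $\mathrm{Homeo}(X)$ is automatically continuous.
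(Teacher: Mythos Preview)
Your overall architecture is close to the paper's, and several steps are carried out correctly: obtaining for each $g$ a continuous map $x\mapsto g*x$ agreeing $\mu$-a.e.\ with $g\cdot x$ (this is just Theorem~\ref{fact:factonil} applied to the automorphism $x\mapsto g\cdot x$), and then upgrading the identity $(g_1g_2)*x=g_1*(g_2*x)$ from $\mu$-a.e.\ to everywhere by continuity in $x$ and full support of $\mu$. The paper does exactly these two steps.

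The genuine gap is in your treatment of joint continuity. You repeatedly invoke a Haar measure on $G$ (``$(\text{Haar}\times\mu)$-almost every $(g,x)$'', ``$(\text{Haar}\times\text{Haar})$-a.e.\ $(g_1,g_2)$''), but $G$ is only assumed Polish, not locally compact, so no Haar measure is available. This breaks your Fubini argument for joint measurability of $(g,x)\mapsto(g*f)(x)$, and with it your route to showing $g\mapsto g*$ is Borel as a map into $\mathrm{Homeo}(X)$; without that, the automatic-continuity theorem cannot be applied. You might try to replace measure by category, but this would need to be written out carefully.

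The paper avoids this problem by a direct and much shorter argument that exploits a feature of $\CD_d$ you did not use: the continuity of $\CD_d\colon L^{2^d-1}(\mu)\to\CC(X)$ (Lemma~\ref{fact:continuousDd}, coming from Lemma~\ref{fact:boundedDd}). For $f\in\CC(X)$ set $f_g(x)=f(g*x)=f(g\cdot x)$ $\mu$-a.e.; a standard descriptive-set-theoretic fact (Becker--Kechris~\cite{BK}) gives that $g\mapsto f_g$ is continuous from $G$ to $L^{2^d-1}(\mu)$. Composing with $\CD_d$ yields that $g\mapsto\CD_d f_g=(\CD_d f)(g*\,\cdot\,)$ is continuous from $G$ to $\CC(X)$, i.e.\ $(g,x)\mapsto(\CD_d f)(g*x)$ is jointly continuous. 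Since the functions $\CD_d f$ are uniformly dense in $\CC(X)$ (Lemma~\ref{fact:Dddense}), it follows that $(g,x)\mapsto h(g*x)$ is continuous for every $h\in\CC(X)$, hence $(g,x)\mapsto g*x$ is continuous. No Haar measure, no automatic continuity, and no joint-measurability bookkeeping is needed.
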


By hypothesis, the map $(g,x)\mapsto g\cdot x$ is Borel from 
$G\times X$ to $X$. The action of $G$ on $X$ we want must be such 
that the map $(g,x)\mapsto g*x$  is continuous from $G\times X$ to 
$X$.

\begin{proof}

By Theorem~\ref{fact:factonil}, for every $g\in G$ there exists a 
continuous map $x\mapsto g* x$, commuting with $T$ and preserving 
the measure $\mu$, such that 
$g* x=g\cdot x$ for $\mu$-almost every $x\in X$.
For $g,h\in G$, we have that for $\mu$-almost every $x\in X$, 
$g* (h\cdot x)=gh* x$.  By density, the same equality 
holds everywhere. Therefore, the map $(g,x)\mapsto g* x$ is an 
action of $G$ on $X$.   We are left with showing that this 
map is jointly continuous.
 
Let $f\in\CC(X)$. For $g\in G$, write $f_g(x)=f(g* x)$.
For each $g\in G$, the function $f_g$ is continuous and the 
map $x\mapsto g* x$ commutes with $T$.  
By Proposition~\ref{fact:uniform},  $\CD_df_g(x)=\CD_df(g* x)$ for every $x\in X$.

For each $g\in G$, the functions $f_g$ and $x \mapsto g\cdot x$ are 
equal almost everywhere and represent the same element of 
$L^{2^d-1}(\mu)$.  
Since the action $(g,x)\mapsto g\cdot x$ of $G$ on $X$ 
is Borel and measure preserving, by~\cite{BK} we have that the map 
$g\mapsto f_g$ is continuous from $G$ to $L^{2^d-1}(\mu)$.
By Lemma~\ref{fact:boundedDd}, the map $g\mapsto \CD_df_g$ is continuous from $G$ to $\CC(X)$, meaning that the 
function 
$(g,x)\mapsto\CD_df_g(x)=\CD_df(g* x)$ is continuous on
 $G\times X$. 

By density
(Lemma~\ref{fact:Dddense}), for every function $h\in\CC(X)$,
the function $(g,x)\mapsto h(g*x)$ is continuous on $G\times X$. We deduce that the map $(g,x)\mapsto 
g* x$ is continuous from $G\times X$ to $X$.
\end{proof}

\end{document}